\newcommand{\bp}{\overline{\partial}}
\newcommand{\vps}{\varepsilon}
\newcommand{\og}{\omega}
\newcommand{\lraw}{\longrightarrow}
\newcommand{\lag}{\langle}
\newcommand{\rag}{\rangle}
\newtheorem{thm}{Theorem}[section]
\newtheorem{cor}[thm]{Corollary}
\newtheorem{lem}[thm]{Lemma}
\newtheorem{prop}[thm]{Proposition}
\newtheorem{defn}[thm]{Definition}
\newtheorem{rem}[thm]{\bf{Remark}}
\numberwithin{equation}{section}
\DeclareMathOperator \Vol{Vol}
\DeclareMathOperator \Id{Id}
\DeclareMathOperator \rank{rank}
\DeclareMathOperator \End{End}
\DeclareMathOperator \tr{tr}
\def \pbp {\partial\bar\partial}
\begin{document}

\vspace*{0.5cm} \begin{center}\noindent {\LARGE \bf Mean curvature positivity and rational connectedness  }\\[0.7cm]
Chao Li\footnotemark[1]
%\footnote{\begin{tabular}{@{}r@{}p{13.4cm}@{}} &The research was supported by the National Key R and D Program of China 2020YFA0713100, All authors were supported in part by NSF in China, No.11625106, 12001548, 11571332 and 11721101. The first proof of this paper was completed when the first author was at School of Mathematical Sciences, University of Science and Technology of China, and the second author was at  School of Mathematical Sciences, Xiamen University.\end{tabular}}
\\[0.2cm]
School of Mathematics and Statistics\\
Nanjing University of Science and Technology\\
Nanjing, 210094, P.R.China\\
E-mail: leecryst@mail.ustc.edu.cn\\[0.5cm]
Chuanjing Zhang\footnotemark[1]%\footnote{\begin{tabular}{@{}r@{}p{13.4cm}@{}}&\end{tabular}}
\\[0.2cm]
School of Mathematics and Statistics\\
Ningbo University\\ Ningbo, 315021, P.R. China\\
E-mail: zhangchuanjing@nbu.edu.cn\\[0.5cm]
Xi Zhang\footnotemark[1]%\footnote{\begin{tabular}{@{}r@{}p{13.4cm}@{}}&\end{tabular}}
\\[0.2cm]
School of Mathematics and Statistics\\
Nanjing University of Science and Technology\\
Nanjing, 210094, P.R.China\\
E-mail: mathzx@njust.edu.cn\\[1cm]
\footnotetext[1]{\notag\noindent The research was supported by the National Key R and D Program of China 2020YFA0713100. The  authors are partially supported by NSF in China No.12141104 and 12371062, the China Postdoctoral Science Foundation (No.2018M640583, 2018M642515) and the Fundamental Research Funds for the Central Universities.}

\end{center}

%%%%%%%%%%%%%%%%%%%%%%%%%%%%%%%

{\bf Abstract.} In this paper, we use Uhlenbeck-Yau's continuity method to establish the correspondence between the mean curvature positivity and the HN-positivity on holomorphic vector bundles over compact Hermitian manifolds. As its application, we get a differential geometric criterion for rational connectedness, i.e. we prove that a compact K\"ahler manifold is projective and rationally connected if and only if its holomorphic tangent bundle is mean curvature positive.
\\[1cm]
{\bf AMS Mathematics Subject Classification.} 53C07, 14J60, 32Q15 \\
 {\bf
Keywords and phrases.} Holomorphic vector bundles,\ Hermitian manifold,\ mean curvature,\ rational
connectedness,\ holomorphic map

%%%%%%%%%%%%%%%%%%%%%%%%%%%%%%%

\newpage

\section{ Introduction}

In the study of stabilities and vanishing theorems  of holomorphic vector bundles (\cite{DON85,KobW70,NS65,UY86}), slope and mean curvature are very important notions. Actually, they are also extremely useful in the study of positivities of vector bundles. For a holomorphic vector bundle over a compact Riemannian surface (\cite{CF,Um}), ampleness is equivalent to slope positivity (i.e. the positivity of the minimum of  the slopes of quotient bundles), and also to mean curvature positive (i.e. the existence of a Hermitian metric with positive mean curvature). In higher dimension case, the situation is quite different. The motivation of this paper is to study the equivalence relationship between slope positivity and mean curvature positivity, and related problems.

\begin{defn}
Let $(E, \bar{\partial }_{E})$ be a holomorphic vector bundle over a complex manifold $M$ of dimension $n$. Given a Hermitian metric $\omega$ on $M$ and a Hermitian metric $H$ on $E$, we call $\sqrt{-1}\Lambda_{\omega }F_{H}$ the $\omega$-mean curvature of $H$, where $F_{H}$ is the curvature form of Chern connection $D_{H}$ with respect to the Hermitian metric $H$, $\Lambda_{\omega }$ denotes the contraction with  $\omega $.
\begin{enumerate}
\item  $(E, \bar{\partial }_{E})$ is called mean curvature positive (resp. nonnegative) if there is a Hermitian metric $\hat{\omega } $ on $M$ and a Hermitian metric $H$ on $E$ such that $\sqrt{-1}\Lambda_{\hat{\omega } }F_{H}>0$ (resp. $\sqrt{-1}\Lambda_{\hat{\omega }}F_{H}\geq 0$ ).
\item $(E, \bar{\partial }_{E})$ is called uniformly mean curvature positive (resp. nonnegative) if for any Hermitian metric $\omega$ on $M$, there exists a Hermitian metric $H$ on $E$ with $\sqrt{-1}\Lambda_{\omega}F_H>0$ (resp. $\geq 0$).
\end{enumerate}

We say $(E, \bar{\partial }_{E})$ is mean curvature negative (resp. nonpositive) if its dual bundle is mean curvature positive (resp. nonnegative). A  complex manifold $M$ is called mean curvature positive (resp. nonnegative) if its holomorphic tangent bundle $T^{1,0}M$ is mean curvature positive (resp. nonnegative).

\end{defn}

We say a Hermitian metric $\omega $  is Gauduchon if it satisfies $\partial \bar{\partial} \omega^{n-1}=0$. Gauduchon (\cite{Gaud}) proved that on a compact complex manifold, there is a unique Gauduchon metric $\omega $ up to a positive constant in the conformal class of every Hermitian metric $\hat{\omega }$.
Assume $(M, \omega ) $ is a compact Gauduchon manifold and $\mathcal{F}$ is a coherent sheaf over $M$. The $\omega $-degree of $\mathcal{F}$ is given by
\begin{equation}
\deg_{\omega }(\mathcal{F} ):=\deg_{\omega }(\det(\mathcal{F}) )=\int_{M}c_{1}(\det(\mathcal{F}), H)\wedge \frac{\omega^{n-1}}{(n-1)!},
\end{equation}
where $H$ is an arbitrary Hermitian metric on $\det{\mathcal{F}}$. This is a well-defined real number independent of $H$ since $\omega^{n-1}$ is $\partial \bar{\partial }$-closed. We define the $\omega$-slope of $\mathcal{F}$ as
\begin{equation}
\mu_{\omega }(\mathcal{F} ):=\frac{\deg_{\omega }(\mathcal{F} )}{\rank (\mathcal{F} )}.
\end{equation}
 A holomorphic vector bundle $(E , \bar{\partial }_{E})$ is called $\omega $-stable (semistable) if for every proper saturated subsheaf $\mathcal{S}\subset E$, there holds
\begin{equation}
\mu_{\omega}(\mathcal{S})<(\leq)\mu_{\omega}(E).
\end{equation}
We say $H$  is
a Hermitian-Einstein metric on $(E, \bar{\partial }_{E} )$ if it satisfies
\begin{equation}
\sqrt{-1}\Lambda_{\omega} F_{H} =\lambda\cdot \mathrm{Id}_{E},
\end{equation}
where  $\lambda =\frac{2\pi }{\Vol(M, \omega )}\mu_{\omega}(E) $. In this paper we always fix the holomorphic structure $\bar{\partial }_{E}$, and for brevity we sometimes denote $(E, \bar{\partial }_{E} )$ by $E$. The classical Donaldson-Uhlenbeck-Yau theorem (\cite{NS65,DON85,UY86}) states that, when $\omega$ is K\"ahler, the stability implies the existence of  Hermitian-Einstein metric. According to \cite{Bu,LT}, we know that the Donaldson-Uhlenbeck-Yau theorem is also valid for compact Gauduchon manifolds. There
are  many other interesting and important works related (\cite{ag,BS,Bi,bis0,bis,br,DW, HIT, HuLe,Jacob1,Kobayashi,JZ, LN2, LT,LT95, LZ, LZZ, LZZ2, Mo1, Mo2, Mo3,NR01,SIM,SIM2,WZ}, etc.). In \cite{NZ}, Nie and the third author proved that on a compact Gauduchon manifold $(M, \omega)$, every semistable holomorphic vector bundle $(E, \bar{\partial }_{E})$  admits an approximate Hermitian-Einstein structure, i.e. for any $\delta >0$, there exists a Hermitian metric $H_{\delta}$ such that
\begin{equation}
\sup_M|\sqrt{-1}\Lambda_{\omega }F_{H_{\delta}}-\lambda \cdot \textmd{Id}_E|_{H_{\delta}}<\delta.
\end{equation}
This means that every semistable holomorphic vector bundle $(E, \bar{\partial }_{E})$ over the compact Gauduchon manifold $(M, \omega)$ must admit a Hermitian metric with negative mean curvature if $\mu_{\omega }(E)<0$.

Let $\mathcal{S}$ be a coherent subsheaf of the holomorphic vector bundle $(E, \bar{\partial }_{E})$, and $H$ be a Hermitian metric on $E$. Bruasse (\cite{Bru}) derived the following Chern-Weil formula:
\begin{equation}\label{B1}
\begin{split}
\deg_{\omega }(\mathcal{S} )=&\int_{M\setminus \Sigma_{alg}}\frac{\sqrt{-1}}{2\pi}\tr F_{H_{\mathcal{S}}}\wedge \frac{\omega^{n-1}}{(n-1)!}\\
=& \frac{1}{2\pi}\int_{M\setminus \Sigma_{alg}} (\sqrt{-1}\textmd{tr}(\pi_{\mathcal{S}}^{H} \Lambda_{\omega} F_{H})-|\bar{\partial }\pi_{\mathcal{S}}^{H} |_{H}^{2})\frac{\omega^{n}}{n!},
\end{split}
\end{equation}
where $\Sigma_{alg}$ is the singular set of $\mathcal{S}$, $H_{\mathcal{S}}$ is the induced metric on $\mathcal{S}|_{M\setminus \Sigma_{alg}}$ and $\pi_{\mathcal{S}}^{H}$ is the orthogonal projection onto $\mathcal{S}$ with respect to the metric $H$.
We know that $\deg_{\omega }(S )$ is bounded from above.
Bruasse (\cite{Bru}) also proved that one can find a maximal subsheaf which realizes the supremum of the slopes, i.e. there exists a coherent subsheaf $\mathcal{F}$ such that
\begin{equation}\label{B2}
\mu_{\omega }(\mathcal{F} )=\mu_{U}(E, \omega ):=\sup\{\mu_{\omega}(\mathcal{S}) \; \big| \; \mathcal{S}\textrm{ is a coherent subsheaf of $E$} \}.
\end{equation}
Then it can be seen that the infimum of the slopes of coherent quotient sheaves can be attained, i.e. there exists a coherent quotient sheaf $\bar{\mathcal{Q}}$ such that
\begin{equation}\label{B21}
\mu_{\omega }(\bar{\mathcal{Q}} )=\mu_{L}(E, \omega ):=\inf\{\mu_{\omega}(\mathcal{Q}) \; \big| \; \mathcal{Q}\textrm{ is a coherent quotient sheaf of $E$} \}.
\end{equation}
Furthermore, there is a unique filtration of $(E, \bar{\partial}_{E})$ by subsheaves
\begin{equation}\label{HNS01}
0=\mathcal{E}_{0}\subset \mathcal{E}_{1}\subset \cdots \subset \mathcal{E}_{l}=E
\end{equation}
such that every quotient sheaf $\mathcal{Q}_{\alpha }=\mathcal{E}_{\alpha }/\mathcal{E}_{\alpha -1}$ is torsion-free and $\omega$-semistable, and $\mu_{\omega } (\mathcal{Q}_{\alpha })>\mu_{\omega } (\mathcal{Q}_{\alpha +1})$,  which is called the
Harder-Narasimhan filtration of $(E, \bar{\partial}_{E})$. If $\rank(E)=r$, we have a nonincreasing
$r$-tuple of numbers
\begin{equation}
\vec{\mu}_\omega(E )=(\mu_{1, \omega} , \cdots , \mu_{r, \omega})
\end{equation}
from the HN-filtration by setting: $\mu_{i, \omega }=\mu_{\omega} (\mathcal{Q}_{\alpha})$, for
$\rank(\mathcal{E}_{\alpha -1})+1\leq i\leq \rank(\mathcal{E}_{\alpha})$. We call $\vec{\mu}_\omega(E)$ the Harder-Narasimhan type of $(E, \bar{\partial}_{E})$. It is easy to see that
\begin{equation}
\mu_{1, \omega }=\mu_{U}(E, \omega ) \quad \text{and} \quad \mu_{r, \omega }=\mu_{L}(E, \omega ).
\end{equation}
For each $\mathcal{E}_{\alpha }$ and the Hermitian metric $K$,  we have the associated orthogonal
projection $\pi_{\alpha }^{K}:E\rightarrow E$ onto $\mathcal{E}_{\alpha }$ with respect to  $K$. It is well  known that every
$\pi_{\alpha }^{K}$ is an $L_{1}^{2}$-bounded  Hermitian endomorphism. We define an $L_{1}^{2}$-bounded Hermitian endomorphism by
\begin{equation}\Phi_{\omega}^{HN} (E, K)=\Sigma_{\alpha=1}^{l}\mu_{\omega } (\mathcal{Q}_{\alpha })(\pi_{\alpha }^{K}-\pi_{\alpha-1}^{K}),\end{equation} which will be called the
Harder-Narasimhan projection of  $(E, \bar{\partial}_{E})$.

\begin{defn}
Let $(E, \bar{\partial }_{E})$ be a holomorphic vector bundle over a compact complex manifold $M$.
\begin{enumerate}
\item Given a Gauduchon metric $\omega $ on $M$, we say $(E, \bar{\partial }_{E})$ is $\omega $-HN-positive (resp. $\omega $-HN-nonnegative)  if $\mu_{L}(E, \omega )>0$ (resp. $\mu_{L}(E, \omega )\geq 0$).

\item $(E, \bar{\partial }_{E})$ is called HN-positive (resp. HN-nonnegative) if there is a Gauduchon metric $\omega $ on the base manifold $M$ such that $\mu_{L}(E, \omega )>0$ (resp. $\mu_{L}(E, \omega )\geq 0$).

\end{enumerate}

    We say $(E, \bar{\partial }_{E})$ is HN-negative (resp. HN-nonpositive) if its dual bundle is HN-positive (resp. HN-nonnegative).
A compact complex manifold $M$ is called HN-positive (resp. HN-nonnegative, HN-negative, HN-nonpositive) if its holomorphic tangent bundle $T^{1,0}M$ is HN-positive (resp. HN-nonnegative, HN-negative, HN-nonpositive).
\end{defn}

Campana and P\u{a}un (\cite{CP1,CP2}) have studied the $\alpha$-slope positivity for some movable class $\alpha$ on the projective manifold. In this paper, we discuss the general complex manifold case, and establish the equivalence between mean curvature positivity (resp.  negativity) and HN-positivity (resp.  HN-negativity).
Let $(M, \hat{\omega } )$ be a compact Hermitian manifold, $\omega $ be a Gauduchon metric in the conformal class of $\hat{\omega }$. If the mean curvature $\sqrt{-1}\Lambda_{\hat{\omega } }F_{H} >0$, equivalently $\sqrt{-1}\Lambda_{\omega  }F_{H} >0$,  by the formula (\ref{B1}), we know that
$\deg_{\omega}(E)-\deg_{\omega}(\mathcal S)>0$ for any coherent subsheaf $\mathcal{S}$. Then
\begin{equation}
\mu_{L}(E,\omega)>0,
\end{equation}
i.e. $(E, \bar{\partial }_{E})$ is $\omega$-HN-positive. To establish the above equivalence, we only need to prove that $\omega$-HN-positivity implies $\omega$-mean curvature positivity, i.e. there exists a Hermitian metric $H$ on $E$ such that $\sqrt{-1}\Lambda_{\omega  }F_{H} >0$.

We denote the $r$ eigenvalues of the mean curvature $\sqrt{-1}\Lambda_{\omega} F_H$  by $\lambda_1(H,\omega)$, $\lambda_2(H,\omega)$, $\cdots$, $\lambda_r(H,\omega)$, sorted in the descending order. Then each $\lambda_{\alpha }(H,\omega)$ is Lipschitz continuous.
Set
\begin{equation}\vec\lambda(H,\omega)=(\lambda_1(H,\omega),\lambda_2(H,\omega),\cdots,\lambda_r(H,\omega)),
\end{equation}
\begin{equation}
\lambda_L(H,\omega)=\lambda_r(H,\omega),\quad \lambda_U(H,\omega)=\lambda_1(H,\omega),
\end{equation}
\begin{equation}\hat{\lambda }_L(H,\omega)=\inf_M\ \lambda_L(H,\omega),\quad \hat{\lambda }_U(H,\omega)=\sup_M\ \lambda_U(H,\omega)
\end{equation}
and
\begin{equation}
\begin{split}
&\lambda_{mL}(H,\omega)=\frac{1}{\Vol(M, \omega)}\int_M\lambda_L(H,\omega)\frac{\omega^n}{n!},\\&\lambda_{mU}(H,\omega)=\frac{1}{\Vol(M, \omega)}\int_M\lambda_U(H,\omega)\frac{\omega^n}{n!}.
\end{split}
\end{equation}

In this paper, we first study the following perturbed Hermitian-Einstein equation on $(E, \bar{\partial }_{E})$:
\begin{equation} \label{eq}
\sqrt{-1}\Lambda_{\omega } F_{H}-\lambda \cdot \textmd{Id}_E+\varepsilon \log (K^{-1}H)=0,
\end{equation}
where $K$ is any fixed background metric. Uhlenbeck and Yau first introduced the above perturbed equation in \cite{UY86}, where they used the continuity method to prove the Donaldson-Uhlenbeck-Yau theorem. Due to the fact that the elliptic operators are Fredholm if the base manifold is compact, the equation (\ref{eq}) can be solved for any $\varepsilon \in (0, 1]$. Let $H_{\varepsilon}$ be a solution of perturbed equation (\ref{eq}). When $(M, \omega)$ is a compact Gauduchon manifold, Nie and the third author (\cite[Proposition 3.1]{NZ}) have the key observation:
\begin{equation}\label{eq33}
\int_M \big(\tr((\sqrt{-1}\Lambda_{\omega }F_{K}-\lambda \cdot \textmd{Id}_E) s_{\varepsilon})+\lag\bar{\Psi }(s_{\varepsilon})(\bar{\partial}s_{\varepsilon}), \bar{\partial}s_{\varepsilon}\rag_{K}\big)\frac{\og^n}{n!}=-\varepsilon\int_M \tr(s_{\varepsilon}^{2})\frac{\og^n}{n!},
\end{equation}
where $s_{\varepsilon}=\log (K^{-1}H_{\varepsilon})$ and
\begin{equation}\label{eq3301}
\bar{\Psi}(x, y)=\left\{
\begin{split}
&\frac{e^{y-x}-1}{y-x}, & x\neq y;\\
&\ \ \ \  1, & x=y.
\end{split}\right.
\end{equation}
By using the above identity (\ref{eq33}) and Uhlenbeck-Yau's result (\cite{UY86}) that $L_{1}^{2}$
weakly holomorphic subbundles define saturated coherent subsheaves, following Simpson's argument in \cite{SIM}, we can obtain the existence of
$L^{p}$-approximate critical Hermitian structure on $(E, \bar{\partial }_{E})$, i.e. we proved the following theorem.

\begin{thm} \label{theorem1}
Let $(M, \omega )$ be a compact Gauduchon manifold of complex dimension $n$,  $(E,\bar{\partial}_E)$ be a  holomorphic vector bundle of rank $r$ over $M$, $K$ be a fixed Hermitian metric on $E$ and $H_{\varepsilon}$ be a solution of perturbed equation (\ref{eq}).  Then there exists a sequence $\varepsilon_{i}\rightarrow 0$ such that
 \begin{equation}\label{eqthm1}
 \lim_{i\rightarrow \infty }\left\| \sqrt{-1}\Lambda_{\omega }F_{H_{\varepsilon_{i}}}-\frac{2\pi }{\Vol(M, \omega )}\Phi_{\omega }^{HN}(E, K)\right\|_{L^{p}(K)}=0
 \end{equation}
 for any $0<p<+\infty $. In particular,
 \begin{equation}
 \lim_{i\rightarrow \infty}\lambda_{mL}(H_{\varepsilon_{i}}, \omega )=\frac{2\pi}{\Vol(M,\omega)}\mu_{r, \omega}=\frac{2\pi}{\Vol(M,\omega)}\mu_{L}(E, \omega )
 \end{equation}
 and
 \begin{equation}
 \lim_{i\rightarrow \infty}\lambda_{mU}(H_{\varepsilon_{i}}, \omega )=\frac{2\pi}{\Vol(M,\omega)}\mu_{1, \omega}= \frac{2\pi}{\Vol(M,\omega)}\mu_{U}(E, \omega ).
 \end{equation}
\end{thm}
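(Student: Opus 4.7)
The plan is to adapt Simpson's argument \cite{SIM} to the Gauduchon setting, using the trace identity (1.17) as the key a priori estimate. Setting $s_\vps = \log(K^{-1}H_\vps)$, the perturbed equation (1.19) rearranges to
\[
\sqrt{-1}\Lambda_\og F_{H_\vps} \;=\; \lambda\cdot\Id_E - \vps\, s_\vps,
\]
so the theorem reduces to showing that $\vps\, s_\vps \to \lambda\cdot\Id_E - \frac{2\pi}{\Vol(M,\og)}\Phi_\og^{HN}(E,K)$ in $L^p$ along some subsequence $\vps_i \to 0$; the conclusions (1.21) and (1.22) then follow by integrating eigenvalues, since the ordered eigenvalues of $\Phi_\og^{HN}(E,K)$ are the HN-slopes $\mu_{1,\og}\geq\cdots\geq\mu_{r,\og}$ with the right multiplicities. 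After a harmless normalization fixing $\int_M \tr(s_\vps)\og^n = 0$, I would run the classical Simpson dichotomy on $l_\vps := \|s_\vps\|_{L^1(K)}$.

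In the bounded case $\sup_\vps l_\vps < \infty$, the identity (1.17) forces $\|s_\vps\|_{L_1^2}$ to stay bounded and $\vps\|s_\vps\|_{L^2}^2 \to 0$; an elliptic bootstrap on the perturbed equation then extracts a smooth Hermitian--Einstein limit $H_0$. In this situation $E$ is $\og$-semistable, the HN-filtration is trivial, $\Phi_\og^{HN}(E,K) = \mu_\og(E)\cdot\Id_E$, and (1.20)--(1.22) are immediate.

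In the substantive case $l_\vps \to \infty$, set $u_\vps = s_\vps/l_\vps$. Dividing (1.17) by $l_\vps$ and exploiting the pinching behaviour of $\bar\Psi$ (for $x>y$ the value $\bar\Psi(l_\vps x, l_\vps y)$ stays bounded, while for $x<y$ it blows up, forcing the off-diagonal gradient contributions in the ``wrong'' directions to vanish in the limit) gives a uniform $L_1^2$ bound on $u_\vps$, so along a subsequence $u_\vps \rightharpoonup u_\infty$ weakly in $L_1^2$ and strongly in $L^2$, with $u_\infty$ nontrivial, Hermitian, trace-free, and of almost-everywhere constant eigenvalues. The spectral projections onto sums of eigenspaces of $u_\infty$ satisfy the weakly-holomorphic-subbundle inequalities extracted from a cutoff version of (1.17); by the Uhlenbeck--Yau regularity theorem each such projection arises from a saturated coherent subsheaf of $E$, producing a filtration with well-defined slopes.

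The main obstacle, and the crux of the argument, is to match this analytically produced filtration with the algebraic HN-filtration $\{\mathcal{E}_\alpha\}$ and its slopes $\mu_\og(\mathcal{Q}_\alpha)$. Following Simpson, I would set up the variational problem of minimizing a natural quadratic functional on Hermitian endomorphisms whose spectral projections define weakly holomorphic subbundles of prescribed type. Using the Chern--Weil formula (1.7) and the defining maximality of the HN-filtration (each $\mathcal{Q}_\alpha$ is semistable with strictly decreasing slope), a direct computation identifies the unique minimizer as a rescaling of $\Phi_\og^{HN}(E,K) - \lambda\cdot\Id_E$, and $u_\infty$ realizes this minimum. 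Tracing the scaling back through $l_\vps$ yields the desired $L^p$-convergence of $\vps s_\vps$ for every $p \in (0,\infty)$, which is (1.20).
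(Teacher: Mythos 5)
Your outline correctly reproduces the architecture of the paper's proof: normalize, rescale $u_\varepsilon = s_\varepsilon/l_\varepsilon$, extract $u_\infty$ via the $L_1^2$ bound from the trace identity (Proposition 2.3), use Uhlenbeck--Yau regularity to produce a filtration from the spectral projections $P_A(u_\infty)$, and then identify that filtration with the Harder--Narasimhan filtration. That is the skeleton of Section 3, and the paper also relies on the $L^\infty$ bound from Lemma 2.1 to upgrade $L^q$-convergence ($q<2n/(n-1)$) to $L^p$-convergence for all finite $p$, which you gesture at but should spell out. Minor differences (using $\|s_\varepsilon\|_{L^1}$ rather than $\|s_\varepsilon\|_{L^2}$ for the dichotomy, integral rather than pointwise trace normalization) are harmless.

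The genuine gap is exactly at what you call the crux. The paper does \emph{not} set up an abstract variational problem; it runs a concrete chain of inequalities. It first proves, for each $A$, the degree bound
\[
2\pi\deg_\omega(E_A)\ \le\ \Vol(M,\omega)\sum_{B=1}^{A}\lambda_B\,(r_B-r_{B-1}),\qquad \lambda_B=\lambda-\delta\mu_B,
\]
(Lemma~3.1) by inserting the $H_i$-orthogonal projections into the Chern--Weil formula and passing to the limit with $u_\infty$. Then it shows, directly from the trace identity and the cutoff functions $P_A$ as in Simpson, that
\[
\nu\ :=\ 2\pi\sum_{A}\mu_A\bigl(\deg_\omega(E_A)-\deg_\omega(E_{A-1})\bigr)\ \le\ -\delta,
\]
and finally, via an elementary Abel-summation (Lemma~3.4), that the same quantity $\delta^2+\delta\nu$ is a sum with nonnegative terms, forcing equality in each of the degree bounds. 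This pins down $2\pi\deg_\omega(E_A)=\Vol(M,\omega)\sum_{B\le A}\lambda_B(r_B-r_{B-1})$ exactly, and a short maximality argument ((3.57)--(3.69)) identifies $E_\bullet$ with the HN-filtration and $\lambda_A$ with $\frac{2\pi}{\Vol}\mu_\omega(E_A/E_{A-1})$. Your ``minimizing a natural quadratic functional on Hermitian endomorphisms'' is a reasonable intuition (close to the Atiyah--Bott picture), but as stated it is not a proof: you would have to specify the functional, prove the minimization characterization of $\Phi_\omega^{HN}$ over Gauduchon manifolds, and show $u_\infty$ achieves it, none of which is established in the Gauduchon literature you could cite. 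You need either to carry that program out, or to switch to the paper's direct degree-equality route.

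One further point worth flagging: your dichotomy ($\sup_\varepsilon l_\varepsilon < \infty$ versus $l_\varepsilon\to\infty$) is not the same as the paper's (semistable versus non-semistable), and for strictly semistable $E$ one can have $l_\varepsilon\to\infty$ while $\varepsilon l_\varepsilon\to 0$. In that regime your ``unbounded-case'' filtration is not destabilizing and cannot match a nontrivial HN-filtration, so you must track the limit $\delta=\lim \varepsilon_i l_i$ separately and observe that $\delta=0$ already gives $\sqrt{-1}\Lambda_\omega F_{H_{\varepsilon_i}}\to\lambda\Id_E$ directly, bypassing the filtration argument. The paper avoids this pitfall by disposing of the semistable case up front via \cite{NZ} and only choosing the subsequence with $\varepsilon_il_i\to\delta>0$ in the non-semistable case.
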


\medskip

In the K\"ahler case, the $L^{p}$-approximate critical Hermitian structure was first suggested by Daskalopoulos and Wentworth (\cite{DW}), and its existence plays a crucial role in proving the Atiyah-Bott-Bando-Siu conjecture (\cite{DW,Ja3,Sib,LZZ2}). It should be pointed out that, even in the K\"ahler case,  our proof is new and very different from previous proofs, where they depend on the resolution of singularities theorem of Hironaka (\cite{Hi2}) and the Donaldson-Uhlenbeck-Yau theorem of reflexive sheaf by Bando and Siu (\cite{BS}).

According to  Theorem \ref{theorem1}, for any $\delta >0$, there exists a Hermitian metric $H_{\varepsilon }$ satisfying $\lambda_{mL}(H_{\varepsilon}, \omega)>\frac{2\pi}{\Vol(M,\omega)}\mu_{L}(E, \omega )-\delta $. Let $\tilde \lambda$ be a smooth function such that
$$\tilde \lambda\leq \lambda_L(H_{\varepsilon},\omega)+\lambda_{mL}(H_{\varepsilon}, \omega)-\frac{2\pi}{\Vol(M,\omega)}\mu_{L}(E, \omega )+\delta$$
and $\int_M\tilde{\lambda}\frac{\omega^n}{n!}=\lambda_{mL}(H_\varepsilon,\omega)\Vol(M,\omega)$. Since $\omega $ is a Gauduchon metric, by a conformal transformation as in \cite{Kobayashi}, taking $\hat{H}_{\delta}=e^{f}H_{\varepsilon}$ where $f$ is defined by $\sqrt{-1}\Lambda_{\omega }\partial \bar{\partial }f=\tilde \lambda-\lambda_{mL}(H_{\varepsilon},\omega)$, we have
 \begin{equation}
 \sqrt{-1}\Lambda_{\omega }F_{\hat{H}_{\delta}}\geq \frac{2\pi}{\Vol(M,\omega)}\mu_{L}(E, \omega )-\delta .
 \end{equation}
 So we establish the following equivalence between mean curvature positivity (resp. negativity) and HN-positivity (resp. HN-negativity).

\begin{thm}\label{cor0}
Let $(M, \tilde{\omega} )$ be a compact Hermitian manifold, and $(E, \bar{\partial }_{E})$ be a holomorphic vector bundle over $M$. Then there exists a Hermitian metric $H$ such that the mean curvature $\sqrt{-1}\Lambda_{\tilde{\omega}}F_{H}$ is positive (resp.  negative ) if and only if  $\mu_{L}(E, \omega )>0$ (resp.  $\mu_{U}(E, \omega )<0$), where $\omega$ is a Gauduchon metric conformal to $\tilde{\omega}$.
\end{thm}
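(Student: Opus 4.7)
My plan is to treat each direction separately, and to deduce the positivity case from the negativity case by passing to the dual bundle, since duality exchanges coherent subsheaves with quotient sheaves and negates both slopes and mean curvature. Hence it suffices to prove that $E$ admits a Hermitian metric with negative mean curvature if and only if $\mu_U(E,\omega) < 0$. For the necessity direction, I would argue as follows: conformally rescaling $\tilde\omega$ to the Gauduchon metric $\omega$ only multiplies $\Lambda$ by a positive scalar function, so $\sqrt{-1}\Lambda_{\tilde\omega} F_H < 0$ implies $\sqrt{-1}\Lambda_\omega F_H < 0$. Then for any coherent subsheaf $\mathcal{S} \subset E$, Bruasse's formula (\ref{B1}) expresses $\deg_\omega(\mathcal{S})$ as the integral of $\sqrt{-1}\tr(\pi_{\mathcal{S}}^H \Lambda_\omega F_H) - |\bar\partial \pi_{\mathcal{S}}^H|_H^2$; the trace term is strictly negative (a nonzero positive semidefinite projection paired with a negative-definite self-adjoint endomorphism) and the second term is nonpositive, so $\mu_\omega(\mathcal{S}) < 0$ for every $\mathcal{S}$, hence $\mu_U(E,\omega) < 0$.

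The sufficiency direction is the substantive one and is where I invoke Theorem~\ref{theorem1} as the core analytic input. Assume $\mu_U(E,\omega) < 0$. Fix any background metric $K$ and apply Theorem~\ref{theorem1}: by selecting $H_\varepsilon := H_{\varepsilon_i}$ for $i$ large enough, one obtains a metric with $\lambda_{mU}(H_\varepsilon,\omega) = -\delta$ for some $\delta > 0$, since $\frac{2\pi}{\Vol(M,\omega)}\mu_U(E,\omega) < 0$. Only the mean of the largest pointwise eigenvalue is known to be negative at this stage; the pointwise value $\lambda_U(H_\varepsilon,\omega)$ may remain positive at some points. The remaining task is therefore to upgrade mean control to pointwise control via a conformal transformation.

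This final step hinges on the Gauduchon hypothesis. I would choose a smooth function $\tilde\lambda$ with $\tilde\lambda > \lambda_U(H_\varepsilon,\omega) - \delta$ pointwise and $\int_M \tilde\lambda \frac{\omega^n}{n!} = -\delta\Vol(M,\omega)$; the two constraints are compatible because the pointwise lower bound only forces $\int_M \tilde\lambda \frac{\omega^n}{n!} > -2\delta\Vol(M,\omega)$. Then solve $\sqrt{-1}\Lambda_\omega \partial\bar\partial f = \tilde\lambda - \lambda_{mU}(H_\varepsilon,\omega)$ for a smooth real function $f$; this is solvable precisely because the Gauduchon condition $\partial\bar\partial \omega^{n-1} = 0$ makes the cokernel of $\sqrt{-1}\Lambda_\omega \partial\bar\partial$ equal to the constants, and the right-hand side integrates to zero by construction. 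Setting $H := e^f H_\varepsilon$ yields the curvature shift $\sqrt{-1}\Lambda_\omega F_H = \sqrt{-1}\Lambda_\omega F_{H_\varepsilon} - \sqrt{-1}\Lambda_\omega \partial\bar\partial f \cdot \Id_E$, so the pointwise largest eigenvalue becomes $\lambda_U(H_\varepsilon,\omega) - \tilde\lambda + \lambda_{mU}(H_\varepsilon,\omega) < 0$ by the choice of $\tilde\lambda$, giving $\sqrt{-1}\Lambda_\omega F_H < 0$ and therefore $\sqrt{-1}\Lambda_{\tilde\omega} F_H < 0$ by conformal invariance. The sole real obstacle in the whole argument is packed into Theorem~\ref{theorem1}; modulo it, the present corollary reduces to a Chern-Weil computation in one direction and a Gauduchon conformal adjustment in the other.
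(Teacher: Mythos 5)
Your proof is correct and follows essentially the same route as the paper: for the necessity direction, conformal invariance of the sign of the mean curvature plus Bruasse's Chern--Weil formula (\ref{B1}) (which already packages the Gauss--Codazzi contribution into the trace and second-fundamental-form terms); for sufficiency, invoke Theorem \ref{theorem1} to produce $H_{\varepsilon}$ with $\lambda_{mU}(H_{\varepsilon},\omega)=-\delta<0$, then perform exactly the Gauduchon conformal rescaling $H=e^{f}H_{\varepsilon}$ with $\sqrt{-1}\Lambda_{\omega}\partial\bar\partial f=\tilde\lambda-\lambda_{mU}(H_{\varepsilon},\omega)$ that the paper uses, together with the duality reduction for the positive case.
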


%Let $t_0=\frac{2\pi}{\Vol(M,\omega)}\mu_{L}(E, \omega )$. By formula (\ref{B1}) we can easily check that, if $t\in\mathbb R$ and there is a Hermitian metric on $E$ with $\sqrt{-1}\Lambda_{\omega }F_{H}\geq t\Id_E$ , then $t_0\geq t$.
%At the same time, according to  Theorem \ref{theorem1}, for any $\delta >0$, there exists a Hermitian metric $H_{\varepsilon }$ with $\lambda_{mL}(H_{\varepsilon}, \omega)\geq t_0-\delta$. Let $\tilde \lambda$ be a smooth function such that
%$\tilde \lambda\leq \lambda_L(H_{\varepsilon},\omega)+\delta$
%and $\frac{1}{\Vol(M,\omega)}\int_M\tilde{\lambda}\frac{\omega^n}{n!}=\lambda_{mL}(H_\varepsilon,\omega)$,
%then $\lambda_L(H_\varepsilon,\omega)-\tilde \lambda+\lambda_{mL}(H_\varepsilon,\omega)\geq t_0-2\delta$. Since $\omega $ is a Gauduchon metric, by a conformal transformation as in \cite{Kobayashi}, take $H=e^{f}H_{\varepsilon}$ where $f$ solves $\sqrt{-1}\Lambda_{\omega }\partial \bar{\partial }f=\tilde \lambda-\lambda_{mL}(H_{\varepsilon},\omega)$, then we have $\sqrt{-1}\Lambda_{\omega }F_{H}\leq (t_0-2\delta)\Id_E$. So we have the following characterization of the minimal and maximal slopes in the Hardy-Narasimhan type

Actually, we can easily deduce the following characterization of the minimal and maximal slopes in the Harder-Narasimhan type.
\begin{thm}\label{minslope}
Let $(M, \omega )$ be a compact Gauduchon manifold,  $(E,\bar{\partial}_E)$ be a  holomorphic vector bundle over $M$. Then $\frac{2\pi}{\Vol(M,\omega)}\mu_{L}(E, \omega )$ is equal to
\begin{equation}
\sup\{t|\text{There is a Hermitian metric $H$ with $\sqrt{-1}\Lambda_{\omega }F_{H}\geq t\Id_E$}\},
\end{equation}
and $\frac{2\pi}{\Vol(M,\omega)}\mu_{U}(E, \omega )$ is equal to
\begin{equation}
\inf\{t|\text{There is a Hermitian metric $H$ with $\sqrt{-1}\Lambda_{\omega }F_{H}\leq t\Id_E$}\}.
\end{equation}
\end{thm}

\begin{rem}\label{rem001}

 By Theorem \ref{cor0}, it is easy to see that the following statements on $E$ are equivalent:
\begin{enumerate}
\item  [\rm{(1)}]   $(E, \bar{\partial }_{E})$ is HN-positive (resp.  HN-negative);
\item  [\rm{(2)}] there is a Hermitian metric $\omega $ on the base manifold $M$ and a Hermitian metric $H$ on $(E, \bar{\partial }_{E})$ such that the mean curvature $\sqrt{-1}\Lambda_{\omega} F_{H}$ is positive (resp.  negative);
\item  [\rm{(3)}] there is a Hermitian metric $\omega $ on the base manifold $M$ and a Hermitian metric $H$ on $(E, \bar{\partial }_{E})$ such that the mean curvature $\sqrt{-1}\Lambda_{\omega} F_{H}$ is quasi-positive (resp.  quasi-negative);
\item [\rm{(4)}]  there is a Gauduchon metric $\omega $ on the base manifold $M$ and a Hermitian metric $H$ on $(E, \bar{\partial }_{E})$ such that $\lambda_{mL}(H,\omega)>0$ (resp.  $\lambda_{mU}(H,\omega)<0$).
\end{enumerate}

\end{rem}

A projective manifold $M$ is called rationally connected if any two points on $M$ can be connected by some rational curves. The rational connectedness is an important concept in algebraic geometry, and many people have given the criteria for it (\cite{CDP14,GHS,Pet,LP17,Cam16}). It is of interest to give a geometric interpretation for rational connectedness.
By using Campana-Demailly-Peternell's criterion for rational connectedness (\cite[Criterion 1.1]{CDP14}), Yang (\cite[Corollary 1.5]{Ya0}) proved that  for a compact K\"ahler manifold, if its holomorphic tangent bundle is mean curvature positive, then it must be projective and rationally connected.  This also confirmed the well-known Yau's conjecture (\cite[Problem 47]{Yau82}) that the compact K\"ahler manifold with positive holomorphic sectional
curvatures must be projective and rationally connected.
We naturally ask the reverse question: if $M$ is projective and rationally connected, is its holomorphic tangent bundle mean curvature positive?

 As an application of Theorem \ref{cor0} and Campana-Demailly-Peternell's result (\cite[Criterion 1.1]{CDP14}), we show that rational connectedness implies mean curvature positivity. This also solves a question of Demailly and Yang (Problem 4.17 in \cite{Ya1}). Thus we arrive at the following theorem.

\begin{thm}\label{RC}
Let $M$ be a compact K\"ahler manifold. Then $M$ is projective and rationally connected if and only if its holomorphic tangent bundle $T^{1,0}M$ is mean curvature positive, i.e. there exist a Hermitian metric $\omega $ on $M$ and a Hermitian metric $H$ on $T^{1,0}M$ such that $\sqrt{-1}\Lambda_{\omega }F_{H}>0$.
\end{thm}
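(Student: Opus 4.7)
The plan is to reduce both directions of the theorem to Theorem \ref{cor0}, which equates mean curvature positivity of $T^{1,0}M$ with the existence of a Gauduchon metric $\omega$ for which $T^{1,0}M$ is $\omega$-HN-positive, i.e., $\mu_L(T^{1,0}M,\omega)>0$. Via this equivalence, mean curvature positivity translates into an HN-slope condition on torsion-free quotients of $T^{1,0}M$; dualizing the determinant of such a quotient, it becomes a condition on invertible subsheaves of $\Omega_M^p=\wedge^p(T^{1,0}M)^{*}$, which is exactly the setting of the Campana-Demailly-Peternell criterion \cite[Criterion 1.1]{CDP14}.

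The direction ``mean curvature positive $\Rightarrow$ projective and rationally connected'' is essentially Yang's \cite[Corollary 1.5]{Ya0}. Assume $\sqrt{-1}\Lambda_\omega F_H>0$ for some pair $(\omega,H)$. By Theorem \ref{cor0}, $\mu_L(T^{1,0}M,\omega)>0$, and positive mean curvature is inherited by $\wedge^p T^{1,0}M$ with the induced metric, so $\mu_L(\wedge^p T^{1,0}M,\omega)>0$ as well. Consequently every invertible subsheaf $\mathcal{L}\hookrightarrow\Omega_M^p$ corresponds by duality to a torsion-free rank-one quotient of $\wedge^p T^{1,0}M$ of strictly positive $\omega$-slope, forcing $\deg_\omega\mathcal{L}<0$, so such an $\mathcal{L}$ cannot be pseudo-effective. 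Projectivity of $M$ follows from Kobayashi-Wu vanishing (which gives $h^{0}(M,\Omega_M^p)=0$ for $p\geq 1$, in particular $h^{2,0}(M)=0$, hence projectivity of the K\"ahler manifold $M$), and CDP then yields rational connectedness.

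For the reverse direction, assume $M$ is projective and rationally connected. By Theorem \ref{cor0} it suffices to produce a K\"ahler metric $\omega$ with $\mu_L(T^{1,0}M,\omega)>0$. Fix any K\"ahler class $\omega$ and argue by contradiction: if $\mu_L(T^{1,0}M,\omega)\leq 0$, let $\mathcal{Q}_l=E/\mathcal{E}_{l-1}$ (with $E=T^{1,0}M$) be the minimal destabilizing quotient coming from the HN-filtration, so $\mathcal{Q}_l$ is torsion-free, $\omega$-semistable, and has $\mu_\omega(\mathcal{Q}_l)\leq 0$. The Campana-Paun slope-positivity theorem \cite{CP1,CP2} then promotes this to pseudo-effectivity of the dual determinant $(\det\mathcal{Q}_l)^{-1}$. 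Setting $p=\rank(\mathcal{Q}_l)$, the surjection $\wedge^p T^{1,0}M\twoheadrightarrow\det\mathcal{Q}_l$ dualizes to an embedding $(\det\mathcal{Q}_l)^{-1}\hookrightarrow\Omega_M^p$, producing a pseudo-effective invertible subsheaf of $\Omega_M^p$, contradicting the CDP criterion for the rationally connected $M$.

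The main obstacle is the step that converts the non-positive slope $\mu_\omega(\mathcal{Q}_l)\leq 0$ against a single K\"ahler polarization into genuine pseudo-effectivity of the line bundle $(\det\mathcal{Q}_l)^{-1}$, since pseudo-effectivity is strictly stronger than non-negativity of degree against a single K\"ahler class. This bridge is supplied by the Campana-Paun theorem on slope positivity against movable classes (equivalently, by Boucksom-Demailly-Paun-Peternell cone duality). All other ingredients---the HN formalism, Theorem \ref{cor0}, Kobayashi-Wu vanishing, and the CDP criterion itself---are invoked as black boxes.
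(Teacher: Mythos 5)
Your forward direction (mean curvature positive $\Rightarrow$ projective and rationally connected) is essentially the paper's proof of $(2)\Rightarrow(1)$ in Lemma \ref{rcslop}: Theorem \ref{cor0} gives $\mu_L(T^{1,0}M,\omega)>0$ for a Gauduchon $\omega$, any invertible subsheaf of $\Lambda^{p,0}M$ then has strictly negative $\omega$-degree (hence is not pseudoeffective), and CDP Criterion~1.1 (the implication (c)$\Rightarrow$(a) of Proposition \ref{rationc}) gives the conclusion, including projectivity. Your detour through Kobayashi--Wu to see $h^{2,0}=0$ is a valid alternative route to projectivity of a compact K\"ahler manifold, but it is not needed.

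The reverse direction, however, has a genuine gap. You fix a single K\"ahler class $\omega$, take the minimal HN-destabilizing quotient $\mathcal{Q}_l$ of $T^{1,0}M$ with $\mu_\omega(\mathcal{Q}_l)\leq 0$, and claim that Campana--P\u{a}un (or BDPP duality) ``promotes'' this to pseudoeffectivity of $(\det\mathcal{Q}_l)^{-1}$. It does not. By BDPP duality, pseudoeffectivity of a line bundle $L$ is equivalent to $c_1(L)\cdot\alpha\geq 0$ for \emph{every} movable class $\alpha$; non-positive intersection with the \emph{one} class $[\omega^{n-1}]$ says nothing about the rest of the movable cone. (Take $\mathcal{Q}_l=T^{1,0}M$ itself in the semistable case: $\mu_\omega(T^{1,0}M)\leq 0$ means $K_M\cdot\omega^{n-1}\geq 0$ for one K\"ahler $\omega$, which certainly does not make $K_M$ pseudoeffective.) The Campana--P\u{a}un slope-positivity theorems are also not a repair: their hypotheses are of the form ``$K_X+\Delta$ is pseudoeffective'', i.e.\ they go in the opposite regime from the rationally connected case, where $K_M$ is very far from pseudoeffective. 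In short, the ``bridge'' you flag in your last paragraph is real and none of the cited tools build it.

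The paper sidesteps this entirely by using CDP Criterion~1.1(b) rather than (c), and by constructing the metric directly instead of arguing by contradiction. Rational connectedness yields a single movable curve $C$ and $\delta>0$ with $c_1(\mathcal{Q})\cdot[C]\geq\delta\,\mathrm{rank}\,\mathcal{Q}$ \emph{uniformly} for every quotient sheaf $\mathcal{Q}$ of $T^{1,0}M$. After adding a small multiple $\varepsilon[\omega_0^{n-1}]$ of a K\"ahler $(n-1,n-1)$-class, the class $[C]+\varepsilon[\omega_0^{n-1}]$ lands in the \emph{open} balanced cone (using $\mathcal{M}=\overline{\mathcal{B}}$ on a projective manifold, Proposition \ref{dualpef}(3)), so it equals $[\omega^{n-1}]$ for some balanced metric $\omega$; the uniform bound then reads $\mu_L(T^{1,0}M,\omega)\geq\tfrac{\delta}{2(n-1)!}>0$, and Theorem \ref{cor0} finishes. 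Note that this produces a \emph{balanced} (hence Gauduchon) metric, not a K\"ahler one; your stated goal of finding a K\"ahler $\omega$ with $\mu_L>0$ is more than what Theorem \ref{RC} requires and more than the paper's argument delivers.
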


In the last part of this paper, we will present some applications of Theorem \ref{minslope} (and Theorem \ref{cor0}).

\begin{defn}
Let $(E, \bar{\partial}_{E})$ and  $(\tilde{E}, \bar{\partial}_{\tilde{E}})$ be two holomorphic vector bundles over a compact complex manifold $M$. If $(E, \bar{\partial }_{E})$ is HN-negative, we define
\begin{equation}
G(M, E, \tilde{E}):=\inf_{\omega \in \mathcal{G}^{+}(M, E) }\frac{-\mu_U(\tilde{E}, \omega )}{\mu_U(E, \omega )},
\end{equation}
where $\mathcal{G}^{+}(M, E)$ denotes the space of Gauduchon metric $\omega $ on $M$ such that $(E, \bar{\partial }_{E})$ is $\omega$-HN-negative.
\end{defn}

We derive the following vanishing theorem.

\begin{thm}\label{cor111}
Let $(E, \bar{\partial}_{E})$ and  $(\tilde{E}, \bar{\partial}_{\tilde{E}})$ be two holomorphic vector bundles over a compact complex manifold $M$.
If $(E, \bar{\partial }_{E})$ is HN-negative, then
    \begin{equation}
    H^{0}(M, E^{\otimes k}\otimes \tilde{E}^{\otimes l})=0
    \end{equation}
    when $k\geq 1$, $l\geq 0$ and $k>G(M, E, \tilde{E})l$.

    \end{thm}

A holomorphic vector bundle $(E, \bar{\partial }_{E})$ is said to be ample if its tautological line bundle $\mathcal{O}_{E}(1)$ is ample over the projective bundle $\mathbb P(E)$ of hyperplanes of $E$. The notion of positivity is very important in both algebraic geometry and complex geometry. In \cite{Gri69}, Griffiths introduced the following positivity: For a Hermitian metric $H$ on $E$, an $H$-Hermitian $(1,1)$-form $\sqrt{-1}\Theta$ valued in $\End E$ is said to be Griffiths positive, if
at every $p\in M$,  it holds that
\begin{equation}
\langle \Theta(v,\bar v) u,u \rangle_H>0
\end{equation}
for any non-zero vector $u\in E|_p$ and any non-zero vector $v\in T^{1,0}_pM$. We say $(E, \bar{\partial }_{E})$ is Griffiths positive if $E$ admits a Hermitian metric $H$ such that $\sqrt{-1} F_H$ is Griffiths positive.
Of course a Griffiths  positive vector bundle is
ample. However, it is still an open problem of Griffiths (\cite{Gri69}) that the ampleness implies Griffiths-positivity. In \cite{Gri69}, Griffiths raised the
question to determine which characteristic forms are positive on Griffiths positive vector bundles, and proved the second Chern form is positive for the rank two case. Recently,  there are several interesting works on the above Griffiths' question (\cite{Mou,Gul,Div,Pinga,Li20,Fin20,Xiao,DF}). As an application of Theorem \ref{cor0}, we deduce

%On semistable ample vector bundles over complex surfaces, Pingali (\cite{Pinga}) proved that there must exist a Hermitian metric $H$ such that its second Chern form is positive. In \cite{Xiao}, Xiao showed some Schur classes have strictly positive representations on ample bundles, but it is unclear whether these representations can be built from the Chern curvatures of some Hermitian metrics.

\begin{thm}\label{ample}

Let $(E, \bar{\partial}_{E})$ be an ample holomorphic vector bundle over a compact complex manifold $M$. Then $(E, \bar{\partial}_{E})$ is uniformly mean curvature positive, i.e. for any Hermitian metric $\omega $ on $M$, there exists a Hermitian metric $H$ on $E$ such that $\sqrt{-1}\Lambda_{\omega }F_{H}>0$.
\end{thm}

\begin{defn}
Let $(E, \bar{\partial }_{E})$ be a  holomorphic vector bundle over a complex manifold $M$. A Hermitian metric $H$ on $E$ is called RC-positive at point $p\in M$ if  for  any non-zero vector $e$ of $E|_{p}$, there exists a vector $v$ of $T_{p}^{1,0}M$ such that $\langle F_{H}(v, \bar{v})e, e\rangle_{H}>0$. The Hermitian metric $H$ is called uniformly RC-positive at point $p\in M$ if  there exists a vector $v$ of $T_{p}^{1,0}M$ such that $\langle F_{H}(v, \bar{v})e, e\rangle_{H}>0$ for  any non-zero vector $e$ of $E|_{p}$. The Hermitian metric $H$ on $E$ is called RC-positive (resp. uniformly RC-positive) if it is RC-positive (resp. uniformly RC-positive) at all points of $M$. We say $(E, \bar{\partial }_{E})$ is RC-positive (resp. uniformly RC-positive) if it admits a Hermitian metric $H$ which is RC-positive (resp. uniformly RC-positive).
\end{defn}

\begin{rem}
	The concepts of RC-positivity and uniformly RC-positivity were introduced by Yang in \cite{Ya0,Ya1}, and they are very effective in studying the vanishing theorems. From the definition, one can easily see that  a Hermitian metric $H$ with positive mean curvature  must be RC-positive. On the other hand, it's not hard to prove that if $H$ is uniformly RC-positive, then it must be mean curvature positive, i.e. $\sqrt{-1}\Lambda_{\omega }F_{H}>0$ with respect to some Hermitian metric $\omega $ on $M$ (see Proposition \ref{RC01} for details). Hence the mean curvature positivity is an intermediate concept between uniformly RC-positivity and  RC-positivity. If $(E, \bar{\partial }_{E})$ is ample, by virtue of Theorem \ref{ample}, there exists a Hermitian metric $H$ with positive mean curvature on $E$. So for every $1\leq s \leq \rank(E)$ (resp. every $k\geq 1$), the induced metric $\wedge^{s}H$ (resp. $\otimes ^{k}H$) on the bundle $\wedge^{s}E$ (resp. $\otimes ^{k}E$) must have positive mean curvature, and then is also RC-positive. This confirms a conjecture proposed by Yang (\cite[Conjecture 7.10]{Ya0}).
\end{rem}

The holomorphic map is an important research object in complex geometry. There are many generalizations of the classical Schwarz Lemma and rigidity result on holomorphic maps via the works of  Ahlfors, Chern, Lu, Yau and others (\cite{Ah,Chern,Lu,Yau,Roy,YC,Ni,Ya,Zy}). As another application of Theorem \ref{minslope}, we obtain the following integral inequality for holomorphic maps.

\begin{thm}\label{holomorphic1}
Let $f$ be a holomorphic map from a compact Gauduchon manifold $(M, \omega )$ to a Hermitian manifold $(N, \nu )$.  If $f$ is not constant, then there holds
\begin{equation}
2\pi \mu_{L}(T^{1, 0}M, \omega )\leq \int_{M}HB^{\nu }_{f(\cdot)}\cdot f^{\ast }(\nu ) \wedge \frac{\omega^{m-1}}{(m-1)!},
\end{equation}
where $m=\dim ^{\mathbb{C}}M$ and $HB^{\nu}_{f(x)}$ is the supremum of holomorphic bisectional curvatures at $f(x)\in (N, \nu)$.
\end{thm}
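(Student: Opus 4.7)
The plan is to exhibit a particular coherent quotient sheaf $\mathcal Q$ of $T^{1,0}M$ built directly from $f$, and combine the infimum characterization $\mu_L(T^{1,0}M,\omega)\le\mu_\omega(\mathcal Q)$ with Bruasse's Chern--Weil formula (\ref{B1}) and a pointwise bound by the holomorphic bisectional curvature of $N$. Concretely, since $f$ is non-constant, the natural holomorphic bundle morphism $df:T^{1,0}M\to f^*T^{1,0}N$ is nonzero; I would take $\mathcal Q:=\mathrm{Im}(df)\subset f^*T^{1,0}N$, which is coherent as the image of a morphism of coherent sheaves, and is simultaneously a coherent quotient sheaf of $T^{1,0}M$ and a coherent subsheaf of $f^*T^{1,0}N$. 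With $r':=\rank\mathcal Q\ge 1$, the definition of $\mu_L$ immediately gives $\mu_L(T^{1,0}M,\omega)\le\mu_\omega(\mathcal Q)=\deg_\omega\mathcal Q/r'$.

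Next I would equip $\mathcal Q|_{M\setminus\Sigma_{alg}}$ with the Hermitian metric induced from $(f^*T^{1,0}N,f^*\nu)$ and apply formula (\ref{B1}) to $\mathcal Q$ as a coherent subsheaf of $(f^*T^{1,0}N,f^*\nu)$. Dropping the non-positive term $-|\bar\partial\pi^{f^*\nu}_\mathcal Q|^2_{f^*\nu}$ produces
\[
\deg_\omega\mathcal Q\le \frac{1}{2\pi}\int_M \sqrt{-1}\,\tr\bigl(\pi^{f^*\nu}_\mathcal Q\,\Lambda_\omega f^*F_\nu\bigr)\frac{\omega^m}{m!}.
\]
To bound the integrand pointwise, at $x\in M\setminus\Sigma_{alg}$ I would pick an $f^*\nu$-orthonormal basis $\{u_1,\dots,u_{r'}\}$ of $\mathcal Q_x$ and an $\omega$-orthonormal basis $\{\tilde e_\alpha\}_{\alpha=1}^m$ of $T^{1,0}_xM$, so that the integrand rewrites as $\sum_{k,\alpha}R^\nu(f_*\tilde e_\alpha,\overline{f_*\tilde e_\alpha},u_k,\bar u_k)$. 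By the definition of the supremum holomorphic bisectional curvature each summand is at most $HB^\nu_{f(x)}\,|f_*\tilde e_\alpha|^2_\nu\,|u_k|^2_\nu$, and summation together with $|u_k|_\nu=1$ and $\sum_\alpha|f_*\tilde e_\alpha|^2_\nu=\Lambda_\omega f^*\nu=|df|^2_{\omega,\nu}$ yields the pointwise bound $r'\,HB^\nu_{f(x)}\,|df|^2_{\omega,\nu}$. Combining this with the elementary identity $|df|^2_{\omega,\nu}\,\omega^m/m!=f^*\nu\wedge\omega^{m-1}/(m-1)!$ and dividing by $r'>0$ then delivers the stated inequality.

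The main obstacle is the technical justification of Bruasse's formula for the subsheaf $\mathcal Q$ of the pulled-back bundle $f^*T^{1,0}N$: one must verify that the orthogonal projection $\pi^{f^*\nu}_\mathcal Q$ is an $L^2_1$-bounded Hermitian endomorphism of $f^*T^{1,0}N$ and that the singular set $\Sigma_{alg}$ of $\mathcal Q$ is analytic (hence of measure zero). Both follow from the Uhlenbeck--Yau regularity theorem for $L^2_1$-weakly holomorphic subbundles already invoked throughout the paper, so no new analytic input is required; Theorem \ref{cor1} enters this argument only through the surrounding framework (Bruasse's Chern--Weil formula and the sheaf-theoretic definition of $\mu_L$) rather than by direct appeal.
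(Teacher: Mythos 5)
Your argument is correct, and it is a genuinely different and substantially shorter route than the paper's. The paper derives the inequality as an application of the HYM-flow asymptotics: first a Bochner-type pointwise inequality for $|\partial f|^2_{H,\nu}$ (Proposition \ref{bochner}), then an integrated Schwarz-lemma-type inequality $\int_M\lambda_L(H,\omega)\frac{\omega^m}{m!}\le\int_M HB^\nu_{f(\cdot)}f^*\nu\wedge\frac{\omega^{m-1}}{(m-1)!}$ valid for every Hermitian metric $H$ on $T^{1,0}M$ (Theorem \ref{holomorphic2}, proved via the regularized quantity $\log(|\partial f|^2_{H,\nu}+\varepsilon)$ and the Gauduchon integration by parts), and finally passage to the limit $t\to\infty$ along a solution $H(t)$ of the Hermitian-Yang-Mills flow, invoking (\ref{Flow000}) from Theorem \ref{thm3} to replace $\int_M\lambda_L(H(t),\omega)$ by $2\pi\mu_L$. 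You instead identify the coherent quotient sheaf $\mathcal Q=\mathrm{Im}(df)$, use $\mu_L(T^{1,0}M,\omega)\le\mu_\omega(\mathcal Q)$ directly from the definition of $\mu_L$, apply Bruasse's Chern--Weil formula (\ref{B1}) to $\mathcal Q$ as a subsheaf of $(f^*T^{1,0}N,f^*\nu)$, drop the non-positive second fundamental form term (Gauss--Codazzi curvature decreasing), and bound $\sqrt{-1}\tr(\pi^{f^*\nu}_{\mathcal Q}\Lambda_\omega f^*F_\nu)$ pointwise by $\rank(\mathcal Q)\cdot HB^\nu_{f(\cdot)}|df|^2_{\omega,\nu}$, finishing with the identity $|df|^2_{\omega,\nu}\,\frac{\omega^m}{m!}=f^*\nu\wedge\frac{\omega^{m-1}}{(m-1)!}$. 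Your route completely sidesteps Theorems \ref{theorem1}, \ref{thm3}, \ref{cor1} and the flow; it needs only Bruasse's formula and the fact that the image of a holomorphic bundle morphism is a torsion-free coherent sheaf. What the paper's longer route buys is the intermediate Theorem \ref{holomorphic2}, a per-metric integral Schwarz lemma of independent interest, whereas your proof produces only the limiting statement; also the paper's derivation fits the expository purpose of illustrating Theorem \ref{cor1}. One small attribution remark: the integrability of the Chern--Weil integrand and the analyticity of the singular locus are part of Bruasse's Chern--Weil theorem itself rather than the Uhlenbeck--Yau $L^2_1$-regularity result (which runs in the opposite direction, from weak subbundles to coherent subsheaves), but this does not affect the validity of your argument.
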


Therefore, we conclude the following rigidity result of holomorphic maps.

\begin{cor}\label{holomorphic3}
Let $f$ be a holomorphic map from a compact complex manifold $M$ to a complex manifold $N$. If $M$ is HN-nonnegative ( resp. HN-positive) and $N$ admits a Hermitian metric with negative holomorphic bisectional curvature (resp. nonpositive holomorphic bisectional curvature), then $f$ must be constant.
\end{cor}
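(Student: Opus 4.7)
The strategy is to invoke Theorem \ref{holomorphic1} under the contrapositive assumption that $f$ is non-constant, and derive a contradiction in each of the two regimes. The key preparatory observation is that since $\nu$ is a Hermitian $(1,1)$-form on $N$ and $f$ is holomorphic, the pullback $f^{\ast}\nu$ is a semi-positive $(1,1)$-form on $M$; writing
$$f^{\ast}(\nu)\wedge\frac{\omega^{m-1}}{(m-1)!}=g(x)\,\frac{\omega^{m}}{m!},$$
the function $g$ is nonnegative on $M$, and if $f$ is not constant then $df_{p}\neq 0$ at some $p\in M$, forcing $g(p)>0$ and hence $g>0$ on a nonempty open subset.

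For the HN-positive / nonpositive-bisectional case, I would choose (by the definition of HN-positivity of $M$) a Gauduchon metric $\omega$ on $M$ with $\mu_{L}(T^{1,0}M,\omega)>0$. The hypothesis on $N$ gives $HB^{\nu}_{y}\leq 0$ for every $y\in N$, so the integrand $HB^{\nu}_{f(x)}\,g(x)$ is nonpositive on $M$. Inserting this into Theorem \ref{holomorphic1} under the non-constancy of $f$ yields
$$0\;<\;2\pi\,\mu_{L}(T^{1,0}M,\omega)\;\leq\;\int_{M}HB^{\nu}_{f(x)}\,g(x)\,\frac{\omega^{m}}{m!}\;\leq\;0,$$
a contradiction, so $f$ must be constant. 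For the HN-nonnegative / negative-bisectional case, I would instead choose a Gauduchon metric $\omega$ with $\mu_{L}(T^{1,0}M,\omega)\geq 0$. Strict negativity of the holomorphic bisectional curvature of $(N,\nu)$, combined with the fact that at each $y\in N$ the supremum defining $HB^{\nu}_{y}$ is taken over the compact product of unit spheres in $T_{y}^{1,0}N$ (and is therefore attained), forces $HB^{\nu}_{y}<0$ pointwise on $N$. Together with $g\geq 0$ and $g>0$ on a nonempty open set, this makes the right-hand side of Theorem \ref{holomorphic1} strictly negative, contradicting the bound $\geq 2\pi\,\mu_{L}(T^{1,0}M,\omega)\geq 0$.

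The argument is essentially a sign-chase once Theorem \ref{holomorphic1} is in hand, and there is no serious analytic obstacle. The only point that requires a moment of care is the passage from strict pointwise negativity of the holomorphic bisectional curvature on $N$ to strict negativity of its pointwise supremum $HB^{\nu}$, which is exactly what lets the first case conclude; this is handled by compactness of the relevant sphere product, so the main step is really bookkeeping of signs together with the choice of the correct Gauduchon metric realizing the HN-positivity or HN-nonnegativity of $T^{1,0}M$.
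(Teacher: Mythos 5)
Your proof is correct and is exactly the argument that the paper leaves implicit — Corollary \ref{holomorphic3} is stated as an immediate consequence of Theorem \ref{holomorphic1}, with no separate proof given in Section 5, and the sign-chase you describe is the intended deduction. In each case you correctly pick a Gauduchon metric $\omega$ witnessing the HN-positivity (resp. HN-nonnegativity) of $T^{1,0}M$, note that nonpositive (resp. strictly negative, hence strictly negative on the compact image $f(M)$) holomorphic bisectional curvature makes $HB^{\nu}_{f(\cdot)}$ nonpositive (resp. bounded above by a negative constant), and use the nonnegativity of $g=\Lambda_{\omega}f^{\ast}\nu$ together with its strict positivity on the open set where $df\neq 0$ to derive the contradiction with the integral inequality of Theorem \ref{holomorphic1}. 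Both sign cases are handled in the right pairing, and the appeal to compactness to upgrade pointwise strict negativity of $HB^{\nu}$ to a uniform negative bound on $f(M)$ is the one small point that needs care and you handle it correctly.
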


%From the Remark \ref{rem01}, we know that there are many compact complex manifolds are HN-positive, such as: the manifolds admitting  K\"ahler metrics with positive Ricci curvature, the manifolds admitting Hermitian metrics with quasi-positive second Chern-Ricci curvature, the Gauduchon manifolds with positive total first eigenvalue of second Chern-Ricci curvature.

This paper is organized as follows.  Section 2 is devoted to the proof of Theorem \ref{theorem1}. In Section 3, we show Theorem \ref{RC}.  In Section 4, we prove Theorem \ref{cor111}, Theorem \ref{ample}  and Theorem \ref{holomorphic1}.

\medskip

%{\bf  Acknowledgement:}  The research was supported by the National Key R and D Program of China 2020YFA0713100. The  authors are partially supported by NSF in China No.12141104, 11801535, 12371062 and 11721101, the China Postdoctoral Science Foundation (No.2018M640583, 2018M642515) and the Fundamental Research Funds for the Central Universities.

%\medskip

\section{The existence of $L^{p}$-approximate critical Hermitian structure}

In this section we give a proof of Theorem \ref{theorem1}. Let $(M, \omega )$ be a compact Gauduchon manifold of complex dimension $n$ and $(E, \bar{\partial}_E)$ a rank $r$ holomorphic vector bundle endowed  with a Hermitian metric $K$ over $M$. Without loss of generality, we can always  assume $\tr (\sqrt{-1}\Lambda_{\omega } F_{K}-\lambda  \textmd{Id}_E)=0$ with $\lambda =\frac{2\pi }{\Vol(M, \omega )}\mu_{\omega}(E)$. For the convenience of the reader, we explain the main steps of our proof. By \cite{NZ}, we only need to consider the non $\omega$-semistable case. Let $H_{\varepsilon}$ be a solution of perturbed equation (\ref{eq}), and set $h_{\varepsilon}=K^{-1}H_{\varepsilon}$, $s_{\varepsilon}= \log h_{\varepsilon}$, $l_{\varepsilon}= \| s_{\varepsilon}\|_{L^2}$, $u_{\varepsilon}= \frac{s_{\varepsilon}}{l_{\varepsilon}}$. By using the identity (\ref{eq33}) and the arguments of Simpson \cite{SIM},  we can show that, by choosing a subsequence, $u_{\varepsilon} \rightharpoonup u_\infty$ weakly in $L_1^2$, the eigenvalues $\{\mu_A\}_{A=1}^{l}$ of $u_{\infty}$ are constants and $A\geq 2$. According to the regularity statement for $L_1^2$-subbundles in \cite{UY86}, we can construct a saturated subsheaf $E_A$ of $E$ with respect to every distinct eigenvalue $\mu_A$ of $u_{\infty}$, and obtain the following filtration of $(E, \bar{\partial}_E)$
 \begin{equation}
0= E_0\subset E_1 \subset E_2 \subset\cdots \subset E_l=E.
\end{equation}
Furthermore, we prove that this filtration is exactly the Harder-Narasimhan filtration of $(E, \bar{\partial}_E)$, and obtain the existence of the $L^{p}$-approximate critical Hermitian structure.

%The perturbed Hermitian-Einstein equation (\ref{eq}) can be written as
First we review some of the standard facts on the perturbed Hermitian-Einstein equation (\ref{eq}).
For any Hermitian metric  $H$ on $E$, we denote  the Chern connection by $D_{H}$, the $(1,0)$-part of $D_H$ by $\partial_H$ and the curvature form by $F_{H}$. Set $h=K^{-1}H$, then we have the following identities
\begin{equation}\label{id1}
\begin{split}
& \partial _{H}-\partial_{K} =h^{-1}\partial_{K}h,\\
& F_{H}-F_{K}=\bar{\partial }_{E} (h^{-1}\partial_{K} h) .
\end{split}
\end{equation}
As a consequence, the equation (\ref{eq}) can be rewritten as
\begin{equation} \label{eq2}
\sqrt{-1}\Lambda_{\omega }\bar{\partial }_{E} (h^{-1}\partial_{K} h)+\sqrt{-1}\Lambda_{\omega } F_{K}-\lambda \cdot \textmd{Id}_E+\varepsilon \log h=0.
\end{equation}

\medskip

\begin{lem}[\cite{LY}]\label{lm1}
There exists a solution $H_{\varepsilon }$ to the perturbed equation (\ref{eq}) for all $\varepsilon >0$. And there hold that
\begin{enumerate}
  \item $-\frac{\sqrt{-1}}{2}\Lambda_{\omega }\partial \bar{\partial }\left(| \log{h_{\varepsilon }}|_{K}^2\right)+\vps | \log{h_{\varepsilon }} |_{K}^2\leq  | \sqrt{-1}\Lambda_{\omega } F_{K}-\lambda \cdot \Id_E |_{K} | \log{h_{\varepsilon }} |_{K};$
  \item  $\max_M | \log{h_{\varepsilon }} |_{K}\leq \frac{1}{\vps}\cdot \max_M  | \sqrt{-1}\Lambda_{\omega } F_{K}-\lambda \cdot \Id_E |_{K} $;
  \item $\max_M | \log{h_{\varepsilon }} |_{K}\leq C\cdot (\| \log{h_{\varepsilon }} \|_{L^2}+\max_M  | \sqrt{-1}\Lambda_{\omega } F_{K}-\lambda \cdot \Id_E |_{K})$,
\end{enumerate}
where $h_{\varepsilon }=K^{-1}H_{\varepsilon}$, $C$ is a constant depending only on $(M, \omega )$.
Moreover,  from $\tr (\sqrt{-1}\Lambda_{\omega } F_{K}-\lambda \cdot \Id_E)=0$, it holds that
\begin{equation}
\tr \log (h_{\varepsilon })=0
\end{equation}  and $\tr F_{H_{\varepsilon }} =\tr F_{K}$.
\end{lem}

\begin{prop}[{\cite[Proposition 3.1]{NZ}}]\label{key}
Let $(E, \bar{\partial}_{E})$ be a holomorphic vector
bundle with a fixed Hermitian metric $K $ over a compact Gauduchon manifold $(M, \omega )$ of complex dimension $n$. Assume $H$ is a Hermitian metric on $E$ and $s:=\log(K^{-1}H)$. Then we have
\begin{equation}\label{eq04021}
\int_M \mathrm{tr}((\sqrt{-1}\Lambda_{\omega } F_{K})s)\frac{\omega^n}{n!}+\int_{M}\langle \bar{\Psi}(s)(\bar{\partial}_{E}s),\bar{\partial}_{E}s\rangle_{K}\frac{\omega^n}{n!}=\int_M \mathrm{tr}((\sqrt{-1}\Lambda_{\omega } F_{H})s)\frac{\omega^n}{n!},
\end{equation}
where  $\bar{\Psi }$ is the function which is defined in (\ref{eq3301}).
\end{prop}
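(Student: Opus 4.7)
The plan is to prove the identity by a one-parameter interpolation of Hermitian metrics, combined with an integration by parts that uses the Gauduchon condition exactly once. I would interpolate via the path $H_\tau = K e^{\tau s}$ for $\tau\in[0,1]$, so that $h_\tau := K^{-1}H_\tau = e^{\tau s}$ with $H_0=K$ and $H_1=H$. Using (\ref{id1}), namely $F_{H_\tau} - F_K = \bar{\partial}_E(h_\tau^{-1}\partial_K h_\tau)$, I would differentiate in $\tau$; since $s$ commutes with $h_\tau$, a direct calculation gives $\tfrac{d}{d\tau}(h_\tau^{-1}\partial_K h_\tau) = h_\tau^{-1}(\partial_K s)\,h_\tau$, so that
\begin{equation*}
F_H - F_K \;=\; \bar{\partial}_E \!\int_0^1 h_\tau^{-1}(\partial_K s)\, h_\tau \, d\tau.
\end{equation*}

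Next I would apply the graded Leibniz rule to split, for each $\tau$,
\begin{equation*}
\tr\!\big(s\cdot \bar{\partial}_E(h_\tau^{-1}(\partial_K s)\, h_\tau)\big) \;=\; \bar{\partial}\,\tr\!\big(s\cdot h_\tau^{-1}(\partial_K s)\, h_\tau\big) - \tr\!\big((\bar{\partial}_E s)\wedge h_\tau^{-1}(\partial_K s)\, h_\tau\big).
\end{equation*}
By cyclicity and $[s,h_\tau]=0$, the interior trace collapses to $\tr(s\, \partial_K s) = \tfrac12\partial\,\tr(s^2)$, which is independent of $\tau$ and $\partial$-exact. After wedging with $i\,\omega^{n-1}/(n-1)!$ and integrating, two applications of Stokes together with the Gauduchon identity $\partial\bar{\partial}\omega^{n-1}=0$ yield
$\int_M i\,\bar{\partial}\big(\tfrac12\partial\,\tr(s^2)\big)\wedge \omega^{n-1}/(n-1)! = \tfrac{i}{2}\int_M \tr(s^2)\,\bar{\partial}\partial\omega^{n-1}/(n-1)! = 0$,
so this "boundary" contribution vanishes and the only surviving piece is $-\int_0^1\!\int_M i\,\tr((\bar{\partial}_E s)\wedge h_\tau^{-1}(\partial_K s)\, h_\tau)\wedge \omega^{n-1}/(n-1)!\,d\tau$.

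To identify this surviving term, I would diagonalize $s$ pointwise in a $K$-unitary frame $\{e_i\}$ with eigenvalues $\lambda_i$. Then $h_\tau^{-1}(\partial_K s)\, h_\tau$ has $(i,j)$-entry $e^{\tau(\lambda_j-\lambda_i)}(\partial_K s)_{ij}$, and integrating in $\tau\in[0,1]$ produces exactly the factor appearing in (\ref{eq3301}). Using the self-adjointness $(\partial_K s)^{*K}=\bar{\partial}_E s$, which in this frame reads $(\partial_K s)_{ji}=\overline{(\bar{\partial}_E s)_{ij}}$, the double sum reassembles via spectral calculus on the multiplier $\bar{\Psi}$ as $\langle \bar{\Psi}(s)(\bar{\partial}_E s),\bar{\partial}_E s\rangle_K$. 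The pointwise identity $i\,\text{tr}(\eta)\wedge \omega^{n-1}/(n-1)! = \tr(i\Lambda_\omega\eta)\,\omega^n/n!$ for End-valued $(1,1)$-forms $\eta$ then finishes the argument.

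The main obstacle is bookkeeping: correctly tracking the signs from the graded Leibniz rule on End-valued forms, the sign produced by contracting $(0,1)\wedge(1,0)$ components with $i\Lambda_\omega$, and aligning the spectral-calculus output precisely with the asymmetric convention of $\bar{\Psi}$ in (\ref{eq3301}). The essential analytic input is the Gauduchon condition, which enters at exactly one place to annihilate the only nonlocal term arising from the integration by parts; no further structural hypothesis on $(M,\omega)$ is needed.
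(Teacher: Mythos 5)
Your proposal is correct and follows the standard Simpson-style derivation (as in \cite{NZ,SIM,LY}): interpolate via $H_\tau=Ke^{\tau s}$, write $F_H-F_K=\bar\partial_E\int_0^1 h_\tau^{-1}(\partial_K s)h_\tau\,d\tau$, split off the $\bar\partial$-exact piece $\tfrac12\bar\partial\partial\tr(s^2)$ whose integral against $\omega^{n-1}/(n-1)!$ dies by two applications of Stokes and the Gauduchon identity, and then identify the surviving $\tau$-integral with $\bar\Psi(s)$ by diagonalizing $s$. The one point that genuinely requires care, and which you rightly flag, is matching the asymmetric convention of $\bar\Psi$ with the order of indices produced by the trace $\tr\bigl((\bar\partial_E s)\wedge h_\tau^{-1}(\partial_K s)h_\tau\bigr)=\sum_{i,j}(\bar\partial_E s)_{ij}\wedge(h_\tau^{-1}\partial_K s\, h_\tau)_{ji}$; spelling that out would be worthwhile in a final write-up, but the argument as outlined is sound.
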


The above identity (\ref{eq04021}) also works for compact manifolds with nonempty boundary case and some noncompact manifolds case (see \cite[Proposition 2.6]{ZZZ}).

\medskip

Suppose $H_{\varepsilon}$ is the solution of the perturbed equation (\ref{eq}),  i.e.
\begin{equation}\label{eq1}
\sqrt{-1}\Lambda_{\omega}F_{H_{\varepsilon}}-\lambda\Id_E+ \varepsilon\log(K^{-1}H_{\varepsilon})=0.
\end{equation}
By Lemma \ref{lm1}, we have
\begin{equation}\label{P1}
\|\varepsilon \log(K^{-1}H_{\varepsilon})\|_{L^\infty} \leq \|\sqrt{-1}\Lambda_{\omega}F_{K}-\lambda\Id_E\|_{L^{\infty}},\end{equation}
\begin{equation}\label{P2}\tr\log(K^{-1}H_{\varepsilon})=0
\end{equation}
and
\begin{equation}\label{P12}\| \log(K^{-1}H_{\varepsilon})\|_{L^\infty} \leq C(\| \log(K^{-1}H_{\varepsilon})\|_{L^2}+\|\sqrt{-1}\Lambda_{\omega}F_{K}-\lambda\Id_E\|_{L^{\infty}}),\end{equation}
where $C$ is a constant depending only on the geometry of $(M, \omega )$.

According to \cite{NZ}, we know that $(E, \bar{\partial}_E)$ is $\omega$-semistable if and only if $\varliminf\limits_{\varepsilon \rightarrow  0}\| \varepsilon\log(K^{-1}H_{\varepsilon})\|_{L^2}=0$. If $(E, \bar{\partial}_E)$ is not $\omega$-semistable, we must have
\begin{equation}
\varliminf\limits_{\varepsilon \rightarrow  0}\| \varepsilon\log(K^{-1}H_{\varepsilon})\|_{L^2}=\tilde{\delta}> 0.
\end{equation}
Let's choose a sequence $\varepsilon_i\to 0$, as $i\rightarrow \infty$, such that
 \begin{equation}
 \lim\limits_{i\rightarrow \infty}\| \log(K^{-1}H_{\varepsilon_i})\|_{L^2}=+\infty \quad \text{and}\quad
 \lim\limits_{i\rightarrow \infty}\| \varepsilon_i\log(K^{-1}H_{\varepsilon_i})\|_{L^2}=\delta> 0.
 \end{equation}
 At present we don't require that $\delta =\tilde{\delta}$. Later, by (\ref{eq1}) and (\ref{370}), we will see that
 \begin{equation*}
 \begin{split}
 \delta^{2}&=
 \left\Vert\frac{2\pi}{\Vol(M,\omega)}\Phi^{HN}_\omega(E,K)-\lambda \Id_{E}\right\Vert_{L^2(K)}^{2}\\
 &=\sum_{j=1}^{r}\left|\frac{2\pi}{\Vol(M,\omega)}\mu_{j, \omega}(E)-\lambda \right|^{2}\Vol(M,\omega).\\
\end{split}
\end{equation*}
Thus $\delta $ is unique and consequently equal to $\tilde{\delta}$.
 In the sequel, we denote $H_{\varepsilon_i}$ by $H_{i}$ and set $h_i=K^{-1}H_{i}$, $s_i= \log h_i$, $l_i= \| s_i\|_{L^2}$, $u_i= \frac{s_i}{l_i}$ for simplicity. Then
  \begin{equation}
 \frac{\sqrt{-1}\Lambda_{\omega}F_{H_{i}}-\lambda\Id_E}{\varepsilon_i l_i}=-\frac{\varepsilon_i s_i}{\varepsilon_i l_i}=-u_i,
 \end{equation}
 $\tr u_i=0$ and $\|u_i\|_{L^2}=1$.
From (\ref{eq33}), one can see
 \begin{equation}\label{key0}
 \int_M (\tr((\sqrt{-1}\Lambda_{\omega}F_{K}-\lambda\Id_E)u_i)+l_i \langle\bar{\Psi}(l_i u_i)(\bar{\partial}u_i), \bar{\partial}u_i\rangle_K)\frac{\omega^n}{n!}=-\varepsilon_i l_i.
 \end{equation}

By (\ref{key0}) and following Simpson's argument (\cite[Lemma 5.4]{SIM}), we have
\begin{equation}
\|u_i\|_{L^\infty }\leq \hat{C} \quad \text{and} \quad \|D_{K} u_i\|_{L^2}< \tilde{C},
\end{equation}
i.e. $u_i$ are uniformly bounded in $L^\infty$ and $L_1^2$. So one can choose a subsequence, which is also denoted by $\{u_i\}$ for simplicity, such that $u_i \rightharpoonup u_\infty$ weakly in $L_1^2$. By Kondrachov compactness theorem (\cite[Theorem 7.22]{GT}), we know that $L_1^2$ is compactly embedded in $L^q$ for any $0<q< \frac{2n}{n-1}$. This tells us that
\begin{equation}
\lim_{i\rightarrow \infty} \|u_i-u_\infty\|_{L^q}=0
\end{equation}
and
\begin{equation}\label{lqcvg}
\lim_{i\rightarrow \infty} \|\sqrt{-1}\Lambda_{\omega}F_{H_{i}}-\lambda\Id_E+\delta u_\infty\|_{L^q}=0
\end{equation}
for any $0<q< \frac{2n}{n-1}$. Hence $\|u_\infty\|_{L^2}=1$.

Let $\mathcal{F}\subset E$ be a torsion-free subsheaf.
Note that $(\pi^{H_i}_{\mathcal{F}})^{\ast_{K}}= h_i (\pi^{H_i}_{\mathcal{F}})^{\ast_{H_i}} h_i^{-1}= h_i \pi^{H_i}_{\mathcal{F}} h_i^{-1}$ and $(h_i^{\frac{1}{2}}\pi^{H_i}_{\mathcal{F}}h_i^{-\frac{1}{2}})^{\ast_{K}}= h_i^{-\frac{1}{2}}(\pi^{H_i}_{\mathcal{F}})^{\ast_{K}}h_i^{\frac{1}{2}}= h_i^{\frac{1}{2}}\pi^{H_i}_{\mathcal{F}}h_i^{-\frac{1}{2}}$. Thus $|h_i^{\frac{1}{2}}\pi^{H_i}_{\mathcal{F}}h_i^{-\frac{1}{2}}|^2_K=\rank(\mathcal{F})$. Then
\begin{equation}
\begin{split}
2\pi\deg(\mathcal{F})=&\int_M(\tr(\pi^{H_i}_{\mathcal{F}}\sqrt{-1}\Lambda_{\omega}F_{H_{i}})-|\bar{\partial}\pi^{H_i}_{\mathcal{F}}|_{H_{i}}^2)\frac{\omega^n}{n!}\\
\leq&\int_M\tr(\pi^{H_i}_{\mathcal{F}}\sqrt{-1}\Lambda_{\omega}F_{H_{i}})\frac{\omega^n}{n!}\\
=&\int_M\tr(h_i^{\frac{1}{2}}\pi^{H_i}_{\mathcal{F}}h_i^{-\frac{1}{2}}h_i^{\frac{1}{2}}(\sqrt{-1}\Lambda_{\omega}F_{H_{i}})h_i^{-\frac{1}{2}})\frac{\omega^n}{n!}\\
=&\int_M\tr(h_i^{\frac{1}{2}}\pi^{H_i}_{\mathcal{F}}h_i^{-\frac{1}{2}}(\lambda\Id_E-\varepsilon_i\log h_i))\frac{\omega^n}{n!}\\
=&\lambda\cdot\rank(\mathcal{F})\cdot\Vol(M, \omega)+\int_M\tr(h_i^{\frac{1}{2}}\pi^{H_i}_{\mathcal{F}}h_i^{-\frac{1}{2}}\varepsilon_i l_i(u_\infty-u_i))\frac{\omega^n}{n!}\\
&-\int_M\tr(h_i^{\frac{1}{2}}\pi^{H_i}_{\mathcal{F}}h_i^{-\frac{1}{2}}\varepsilon_i l_i u_\infty)\frac{\omega^n}{n!},
\end{split}
\end{equation}
where we have used that $h_i^{\frac{1}{2}}(\sqrt{-1}\Lambda_{\omega}F_{H_{i}})h_i^{-\frac{1}{2}}=\sqrt{-1}\Lambda_{\omega}F_{H_{i}}$ under the condition that $\sqrt{-1}\Lambda_{\omega}F_{H_{i}}=\lambda\Id_E-\varepsilon_i\log h_i$.
Clearly there holds that when $i\rightarrow \infty$,
\begin{equation}
\begin{split}
&\int_M\tr(h_i^{\frac{1}{2}}\pi^{H_i}_{\mathcal{F}}h_i^{-\frac{1}{2}}\varepsilon_i l_i(u_\infty-u_i))\frac{\omega^n}{n!}\\
\leq &  \varepsilon_i l_i\cdot(\rank(\mathcal{F}))^{\frac{1}{2}}\int_M|u_\infty-u_i|_K\frac{\omega^n}{n!}\rightarrow 0.
\end{split}
\end{equation}
Again by (\ref{key0}) and following Simpson's argument (\cite[Lemma 5.5]{SIM}), one can check that the eigenvalues of $u_{\infty}$ are constants and not all equal.
Assume $\mu_1< \mu_2< \cdots< \mu_l$ are the distinct eigenvalues of $u_\infty$. Let $\{e_1, . . . , e_r\}$ be an orthonormal basis of $E$ with respect to $H_i$ at the considered point such that
\begin{equation}
\pi^{H_i}_{\mathcal{F}} e_{\alpha}=\left\{
\begin{split}
\ &e_{\alpha}, \quad &\alpha\leq \rank(\mathcal{F}),\\
\ &0, \quad &\alpha> \rank(\mathcal{F}).
\end{split}\right.
\end{equation}
Then $\langle h_i^{\frac{1}{2}}e_\alpha, h_i^{\frac{1}{2}}e_\beta\rangle_K= \langle h_i e_\alpha, e_\beta\rangle_K=\delta_{\alpha\beta}$. Set $\tilde{e}_\alpha= h_i^{\frac{1}{2}}e_\alpha$. Obviously $\{\tilde{e}_1, . . . , \tilde{e}_r\}$ is an orthonormal basis of $E$ with respect to $K$. It is easy to find that
\begin{equation}
\begin{split}
-\tr(h_i^{\frac{1}{2}}\pi^{H_i}_{\mathcal{F}}h_i^{-\frac{1}{2}}u_\infty)=&\sum_{\alpha=1}^{r}-\langle h_i^{\frac{1}{2}}\pi^{H_i}_{\mathcal{F}}h_i^{-\frac{1}{2}}u_\infty (\tilde{e}_\alpha), \tilde{e}_\alpha\rangle_K\\
=&\sum_{\alpha=1}^{\rank(\mathcal{F})}\langle -u_\infty \tilde{e}_\alpha, \tilde{e}_\alpha\rangle_K\leq -\mu_{1}\rank(\mathcal{F}).
\end{split}
\end{equation}
Thus
\begin{equation}
-\int_M\tr(h_i^{\frac{1}{2}}\pi^{H_i}_{\mathcal{F}}h_i^{-\frac{1}{2}}\varepsilon_i l_i u_\infty)\frac{\omega^n}{n!}\leq -\varepsilon_i l_i \mu_1 \cdot\rank(\mathcal{F})\cdot\Vol(M),
\end{equation}
and then
\begin{equation}\label{lambda1}
\frac{2\pi \deg(\mathcal{F})}{\rank(\mathcal{F})}\leq (\lambda-\delta\mu_1)\Vol(M).
\end{equation}

 For $A< l$, define a smooth function $P_A: \mathbb{R}\to \mathbb{R}$ such that
\begin{equation}
P_A(x)=\left\{
\begin{split}
\ 1, \quad & x\leq \mu_A,\\
\ 0, \quad & x\geq \mu_{A+1}.
\end{split}\right.
\end{equation}
Setting $\pi_A=P_A(u_\infty)$, by the argument as  in \cite[p.~887]{SIM}, we have
\begin{enumerate}
  \item $\pi_{A}\in L^2_1;$
  \item $\pi^2_{A}=\pi_{A}=\pi_{A}^{*_K};$
  \item $(\mathrm{Id}_E-\pi_{A})\bp \pi_{A}=0.$
  \end{enumerate}
According to the regularity statement for $L_1^2$-subbundles in \cite{UY86}, we know that $\pi_A$ defines a saturated subsheaf $E_A$ of $E$ (i.e. subsheaf with torsion-free quotient). Away from the singular set $Sing(E_A)$, $E_A$ is a holomorphic subbundle of $E$.  We also set $E_0={0}$ and $E_l=E$. In the following, write $r_A= \rank(E_A)$ for simplicity.

\begin{lem}\label{L1}
We have
\begin{equation}
-\tr(h_i^{\frac{1}{2}}\pi^{H_i}_{E_A}h_i^{-\frac{1}{2}}u_\infty)\leq \sum_{B=1}^{A}(-\mu_B)(\rank(E_B)-\rank(E_{B-1})),
\end{equation}
where $\pi^{H_i}_{E_A}$ is the orthogonal projection onto $E_A$ with respect to $H_i$, then
\begin{equation}\label{subdeg}
2\pi \deg(E_A)\leq \Vol(M) \sum_{B=1}^{A}(\lambda-\delta\mu_B)(\rank(E_B)-\rank(E_{B-1})).
\end{equation}
\end{lem}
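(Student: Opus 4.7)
The plan is to establish the pointwise inequality first and then integrate and pass to the limit $i\to\infty$ in the chain of inequalities already derived for $2\pi\deg(\mathcal{F})$ with $\mathcal{F}=E_A$.

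For the pointwise statement, I would first observe that $P_i := h_i^{1/2}\pi^{H_i}_{E_A}h_i^{-1/2}$ is the $K$-orthogonal projection onto the subspace $V_i := h_i^{1/2}(E_A)$. Indeed, $P_i^2=P_i$ is immediate from $(\pi^{H_i}_{E_A})^2=\pi^{H_i}_{E_A}$, while $K$-self-adjointness follows from $(\pi^{H_i}_{E_A})^{*_K}=h_i\pi^{H_i}_{E_A}h_i^{-1}$, and its image is visibly $h_i^{1/2}(E_A)$, of (complex) dimension $r_A$. Using cyclicity of the trace and $P_i^2=P_i$, one has $\tr(P_iu_\infty)=\tr(P_iu_\infty P_i)$, so choosing any $K$-orthonormal basis $\{v_1,\ldots,v_{r_A}\}$ of $V_i$ yields
\begin{equation}
\tr(P_iu_\infty)=\sum_{\alpha=1}^{r_A}\langle u_\infty v_\alpha,v_\alpha\rangle_K.
\end{equation}

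Next I would invoke the extremal characterization of eigenvalue sums (Ky Fan). Since $u_\infty$ is $K$-self-adjoint and its projections $\pi_B-\pi_{B-1}=P_B(u_\infty)-P_{B-1}(u_\infty)$ cut out the $\mu_B$-eigenspace, the eigenvalue $\mu_B$ occurs with multiplicity $r_B-r_{B-1}$. Hence the sum of the smallest $r_A$ eigenvalues of $u_\infty$, counted with multiplicity, equals $\sum_{B=1}^A\mu_B(r_B-r_{B-1})$, and Ky Fan's inequality gives
\begin{equation}
\sum_{\alpha=1}^{r_A}\langle u_\infty v_\alpha,v_\alpha\rangle_K\geq \sum_{B=1}^A\mu_B(r_B-r_{B-1}),
\end{equation}
which multiplied by $-1$ is exactly the claimed pointwise bound.

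For \eqref{subdeg} I would return to the chain of inequalities derived immediately before the lemma (with $\mathcal{F}=E_A$), namely
\begin{equation}
2\pi\deg(E_A)\leq \lambda\cdot r_A\cdot\Vol(M,\omega)+I_i-\int_M\tr\bigl(P_i\,\varepsilon_il_iu_\infty\bigr)\frac{\omega^n}{n!},
\end{equation}
where $I_i:=\int_M\tr(P_i\,\varepsilon_il_i(u_\infty-u_i))\frac{\omega^n}{n!}$. The middle term tends to zero as $i\to\infty$ by the Cauchy--Schwarz argument already indicated, using $|P_i|_K^2=r_A$ and the $L^1$-convergence $u_i\to u_\infty$. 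For the last term, $\varepsilon_il_i\to\delta$ and the pointwise bound integrates to
\begin{equation}
-\int_M\tr(P_iu_\infty)\frac{\omega^n}{n!}\leq \Vol(M,\omega)\sum_{B=1}^A(-\mu_B)(r_B-r_{B-1}),
\end{equation}
so passing to the limit and collecting terms gives \eqref{subdeg}.

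The only real point of care is bookkeeping: recognizing that the twisted projection $P_i$ is genuinely a $K$-orthogonal projection (so Ky Fan applies with respect to $K$) and matching the multiplicities of $\mu_B$ to the rank jumps $r_B-r_{B-1}$ coming from the projections $\pi_B-\pi_{B-1}$. The limiting step is routine once the uniform $L^\infty$ bound $|P_i|_K^2=r_A$ is noted.
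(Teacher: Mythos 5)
Your proof is correct and follows essentially the same route as the paper. The only organizational difference is that you recognize the $K$-self-adjoint idempotent $P_i=h_i^{1/2}\pi^{H_i}_{E_A}h_i^{-1/2}$ as a $K$-orthogonal projection and quote Ky Fan's extremal characterization of partial eigenvalue sums, whereas the paper proves exactly that instance by hand (adding and subtracting $\mu_A$, expanding against the spectral projections $\pi_B-\pi_{B-1}$, dropping the non-positive $B\ge A$ terms, and then enlarging the sum over $\alpha$ to all $r$ indices); the integration and passage to the limit $i\to\infty$ to obtain \eqref{subdeg} is the same chain of inequalities in both.
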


\begin{proof} At $x\in M\setminus Sing(E_A)$,  there is a basis $\{e_1, \cdots, e_{r_A}\}$ of  $E_A|_x$. We choose $\{\check{e}_1, \cdots, \check{e}_{r_A}\}$ as an orthonormal basis of $E_A|_x$ with respect to $H_i$, and extend it to $\{\check{e}_1, \cdots, \check{e}_{r_A}, \cdots, \check{e}_r\}$ as an orthonormal basis of $E|_x$ with respect to $H_i$. Set $\hat{e}_\alpha= h_i^{\frac{1}{2}}\check{e}_\alpha$, so $\langle \hat{e}_\alpha, \hat{e}_\beta\rangle_K= \delta_{\alpha\beta}$, i.e. $\{\hat{e}_\alpha\}_{\alpha=1}^{r}$ is an orthonormal basis with respect to $K$. Define $\pi_l= \Id_E$ and $\pi_0=0$. Then one has the fact that \begin{equation}\label{infty}u_\infty= \sum_{B=1}^{l}\mu_B(\pi_B-\pi_{B-1}),\end{equation} where $\pi_B$ defined as above is the orthogonal projection onto $E_B$ with respect to $K$.

A straightforward calculation yields that
\begin{equation}
\begin{split}
&-\tr(h_i^{\frac{1}{2}}\pi^{H_i}_{E_A}h_i^{-\frac{1}{2}}u_\infty)\\
=&-\sum_{\alpha=1}^r\langle h_i^{\frac{1}{2}}\pi^{H_i}_{E_A}h_i^{-\frac{1}{2}}u_\infty(\hat{e}_\alpha), \hat{e}_\alpha\rangle_K \\
%=&\sum_{\alpha=1}^r\langle -u_\infty(\hat{e}_\alpha), h_i^{\frac{1}{2}}\pi^{H_i}_{E_A}h_i^{-\frac{1}{2}}(\hat{e}_\alpha)\rangle_K \\
%=&\sum_{\alpha=1}^r\langle -u_\infty(\hat{e}_\alpha),  h_i^{\frac{1}{2}}\pi^{H_i}_{E_A}(\check{e}_\alpha)\rangle_K\\
=&\sum_{\alpha=1}^{r_A}\langle -u_\infty(\hat{e}_\alpha),  \hat{e}_\alpha\rangle_K \\
%=&\sum_{\alpha=1}^{r_A}\langle (\mu_A\Id_E-u_\infty)\hat{e}_\alpha,  \hat{e}_\alpha\rangle_K- \mu_A\cdot r_A\\
=&\sum_{\alpha=1}^{r_A}\sum_{B=1}^{l}\langle (\mu_A-\mu_B)(\pi_B-\pi_{B-1})(\hat{e}_\alpha),  \hat{e}_\alpha\rangle_K- \mu_A\cdot r_A\\
\leq &\sum_{\alpha=1}^{r_A}\sum_{B=1}^{A-1}\langle (\mu_A-\mu_B)(\pi_B-\pi_{B-1})(\hat{e}_\alpha),  \hat{e}_\alpha\rangle_K- \mu_A\cdot r_A\\
\leq &\sum_{B=1}^{A-1}(\mu_A-\mu_B)\sum_{\alpha=1}^{r}\langle (\pi_B-\pi_{B-1})(\hat{e}_\alpha),  \hat{e}_\alpha\rangle_K- \mu_A\cdot r_A\\
%= & \sum_{B=1}^{A-1}(\mu_A-\mu_B)\tr(\pi_B-\pi_{B-1})- \mu_A\cdot r_A\\
= & \sum_{B=1}^{A}(-\mu_B)(\rank(E_B)-\rank(E_{B-1})),
\end{split}
\end{equation}
where the first inequality comes from the facts that $\mu_A-\mu_B\leq 0$ if $B\geq A$ and $\langle (\pi_B-\pi_{B-1})(\hat{e}_\alpha),  \hat{e}_\alpha\rangle_K\geq 0$, which is due to $(\pi_B-\pi_{B-1})^2=\pi_B^2-\pi_B\circ\pi_{B-1}-\pi_{B-1}\circ\pi_B+ \pi_{B-1}^2= \pi_B-\pi_{B-1}$ and $(\pi_B-\pi_{B-1})^{\ast_{K}}= \pi_B-\pi_{B-1}$, in the last equality  we have used $\tr\pi_B=\rank(E_B)$. Then
\begin{equation}
\begin{split}
2\pi\deg(E_A)=& \int_M (\tr(\pi_{E_A}^{H_i}\sqrt{-1}\Lambda_{\omega}F_{H_i})-|\bar{\partial}\pi_{E_A}^{H_i}|_{H_{i}}^2)\frac{\omega^n}{n!}\\
\leq& \int_M \tr(h_i^{\frac{1}{2}}\pi_{E_A}^{H_i}h_i^{-\frac{1}{2}}h_i^{\frac{1}{2}}\sqrt{-1}\Lambda_{\omega}F_{H_i}h_i^{-\frac{1}{2}})\frac{\omega^n}{n!}\\
=& \int_M \tr(h_i^{\frac{1}{2}}\pi_{E_A}^{H_i}h_i^{-\frac{1}{2}}(\lambda\Id_E-\varepsilon_i l_iu_i))\frac{\omega^n}{n!}\\
=& \lambda\cdot\rank(E_A)\cdot\Vol(M, \omega)\\
&+ \int_M \tr(h_i^{\frac{1}{2}}\pi_{E_A}^{H_i}h_i^{-\frac{1}{2}}\varepsilon_i l_i(u_\infty-u_i))\frac{\omega^n}{n!}\\
&-\int_M \tr(h_i^{\frac{1}{2}}\pi_{E_A}^{H_i}h_i^{-\frac{1}{2}}\varepsilon_i l_iu_\infty)\frac{\omega^n}{n!}\\
\leq& \lambda\cdot\rank(E_A)\cdot\Vol(M, \omega)\\
&+ \int_M \tr(h_i^{\frac{1}{2}}\pi_{E_A}^{H_i}h_i^{-\frac{1}{2}}\varepsilon_i l_i(u_\infty-u_i))\frac{\omega^n}{n!}\\
&-\varepsilon_i l_i \Vol(M, \omega)(\sum_{B=1}^A\mu_B(\rank(E_B)-\rank(E_{B-1}))).
\end{split}
\end{equation}
Therefore, we achieve (\ref{subdeg}).
%\begin{equation}
%2\pi\deg(E_B)\leq \Vol(M, \omega)(\sum_{A=1}^B(\lambda-\delta\mu_A)(\rank(E_A)-\rank(E_{A-1}))).
%\end{equation}
\end{proof}

For simplicity, we write $\lambda_A= \lambda-\delta\mu_A$. Then it follows that $\lambda_1> \lambda_2> \cdots> \lambda_l$. For any torsion-free subsheaf $\mathcal{F}\subset E$, by (\ref{lambda1}), we know that
\begin{equation}
\frac{2\pi \deg(\mathcal{F})}{\rank(\mathcal{F})}\leq \lambda_1\Vol(M, \omega).
\end{equation}
Now consider the exact sequence
\begin{equation}
0 \longrightarrow \mathcal{F} \longrightarrow E \longrightarrow \mathcal{Q} \longrightarrow 0.
\end{equation}
There holds that
\begin{equation}
\begin{split}
2\pi\deg(\mathcal{Q})=& \int_M(\tr((\Id_E-\pi^{H_i}_{\mathcal{F}})\sqrt{-1}\Lambda_{\omega}F_{H_{i}})+|\bar{\partial}\pi^{H_i}_{\mathcal{F}}|_{H_{i}}^2)\frac{\omega^n}{n!}\\
\geq &\int_M\tr(h_i^{\frac{1}{2}}(\Id_E-\pi^{H_i}_{\mathcal{F}})h_i^{-\frac{1}{2}}\cdot\sqrt{-1}\Lambda_{\omega}F_{H_{i}})\frac{\omega^n}{n!}\\
=&\int_M\tr(h_i^{\frac{1}{2}}(\Id_E-\pi^{H_i}_{\mathcal{F}})h_i^{-\frac{1}{2}}(\sqrt{-1}\Lambda_{\omega}F_{H_{i}}-(\lambda\Id_E-\varepsilon_i l_i u_\infty)))\frac{\omega^n}{n!}\\
&+\int_M\tr(h_i^{\frac{1}{2}}(\Id_E-\pi^{H_i}_{\mathcal{F}})h_i^{-\frac{1}{2}}(\lambda\Id_E-\varepsilon_i l_i u_\infty))\frac{\omega^n}{n!}.
\end{split}
\end{equation}
Take $i\to \infty$, then
\begin{equation}
2\pi\deg(\mathcal{Q})\geq\lambda_l\cdot\rank(\mathcal{Q})\cdot\Vol(M, \omega).
\end{equation}
Apply the same argument to the exact sequence
\begin{equation}
0 \longrightarrow E_B \longrightarrow E \longrightarrow E/E_B \longrightarrow 0.
\end{equation}
Then
\begin{equation}\label{deg}
\begin{split}
&2\pi\deg(E/E_B)\\
=&2\pi(\deg(E)-\deg(E_B))\\
=&\int_M(\tr((\Id_E-\pi^{H_i}_{E_B})\sqrt{-1}\Lambda_{\omega}F_{H_{i}})+|\bar{\partial}\pi^{H_i}_{E_B}|_{H_{i}}^2)\frac{\omega^n}{n!}\\
\geq&\int_M\tr(h_i^{\frac{1}{2}}(\Id_E-\pi^{H_i}_{E_B})h_i^{-\frac{1}{2}}(\sqrt{-1}\Lambda_{\omega}F_{H_{i}}-(\lambda\Id_E-\delta u_\infty)))\frac{\omega^n}{n!}\\
&+\int_M\tr(h_i^{\frac{1}{2}}(\Id_E-\pi^{H_i}_{E_B})h_i^{-\frac{1}{2}}(\lambda\Id_E-\delta u_\infty))\frac{\omega^n}{n!}.
\end{split}
\end{equation}

After a similar computation as in Lemma \ref{L1}, one can see

\begin{lem}\label{L2}
\begin{equation}\label{quodeg}
2\pi\deg(E/E_B)\geq \sum_{A=B+1}^l \lambda_A(\rank(E_A)- \rank(E_{A-1}))\Vol(M, \omega).
\end{equation}
\end{lem}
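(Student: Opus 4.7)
The plan is to push (\ref{deg}) across the limit $i\to\infty$ by treating the two integrals on its last line separately, combining an $L^q$-convergence argument with a pointwise linear-algebra estimate that mirrors the one carried out in Lemma \ref{L1}.

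First I would show that the integral
\begin{equation*}
\int_M \tr\bigl(h_i^{1/2}(\Id_E-\pi^{H_i}_{E_B})h_i^{-1/2}\,(\sqrt{-1}\Lambda_{\omega}F_{H_i}-\lambda\Id_E+\delta u_\infty)\bigr)\frac{\omega^n}{n!}
\end{equation*}
tends to $0$ as $i\to\infty$. Exactly as in the computation preceding Lemma \ref{L1}, the endomorphism $h_i^{1/2}(\Id_E-\pi^{H_i}_{E_B})h_i^{-1/2}$ is $K$-self-adjoint with pointwise $K$-norm squared identically equal to $r-r_B$, hence uniformly bounded in $L^\infty(K)$. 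By (\ref{lqcvg}) the remaining factor converges to $0$ in $L^q(K)$ for any $q<\frac{2n}{n-1}$, so a Hölder estimate kills the integral in the limit.

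The core of the proof is then the pointwise inequality
\begin{equation*}
\tr\bigl(h_i^{1/2}(\Id_E-\pi^{H_i}_{E_B})h_i^{-1/2}\,u_\infty\bigr)\leq \sum_{A=B+1}^l \mu_A\bigl(\rank(E_A)-\rank(E_{A-1})\bigr),
\end{equation*}
which I would prove away from the singular sets $\bigcup_A\mathrm{Sing}(E_A)$. Paralleling Lemma \ref{L1}, fix an $H_i$-orthonormal basis $\{\check{e}_\alpha\}$ of $E|_x$ adapted to $E_B|_x$ (first $r_B$ vectors spanning $E_B|_x$) and set $\hat{e}_\alpha=h_i^{1/2}\check{e}_\alpha$, which is a $K$-orthonormal basis. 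Then $h_i^{1/2}(\Id_E-\pi^{H_i}_{E_B})h_i^{-1/2}\hat{e}_\alpha$ equals $\hat{e}_\alpha$ for $\alpha>r_B$ and vanishes otherwise, so the trace reduces to $\sum_{\alpha>r_B}\lag u_\infty\hat{e}_\alpha,\hat{e}_\alpha\rag_K$. Using the spectral expansion (\ref{infty}) and inserting the scalar shift by $\mu_{B+1}$, I would rewrite this as
\begin{equation*}
\mu_{B+1}(r-r_B)+\sum_{\alpha>r_B}\sum_{A=1}^l(\mu_A-\mu_{B+1})\lag(\pi_A-\pi_{A-1})\hat{e}_\alpha,\hat{e}_\alpha\rag_K.
\end{equation*}
For $A\leq B$ the weight $\mu_A-\mu_{B+1}<0$ while the inner product is $\geq 0$ (since $\pi_A-\pi_{A-1}$ is a $K$-orthogonal projection), so those terms are non-positive and dropping them enlarges the sum; for $A\geq B+1$ the weights $\mu_A-\mu_{B+1}\geq 0$ are non-negative, so extending the $\alpha$-sum from $\alpha>r_B$ to $\alpha=1,\dots,r$ again only enlarges the expression, and collapses it via $\sum_{\alpha}\lag(\pi_A-\pi_{A-1})\hat{e}_\alpha,\hat{e}_\alpha\rag_K=\rank(E_A)-\rank(E_{A-1})$. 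The $\mu_{B+1}$-contributions then cancel, giving exactly the claimed bound.

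Combining the two steps, the pointwise estimate and $\sum_{A=B+1}^l(\rank(E_A)-\rank(E_{A-1}))=r-r_B$ yield
\begin{equation*}
\tr\bigl(h_i^{1/2}(\Id_E-\pi^{H_i}_{E_B})h_i^{-1/2}(\lambda\Id_E-\delta u_\infty)\bigr)\geq \sum_{A=B+1}^l\lambda_A\bigl(\rank(E_A)-\rank(E_{A-1})\bigr),
\end{equation*}
and integrating over $M$ and letting $i\to\infty$ in (\ref{deg}) gives (\ref{quodeg}). I expect the only real obstacle to be the sign bookkeeping in the pointwise estimate: one must choose the shift $\mu_{B+1}$ (rather than the $\mu_B$ used in Lemma \ref{L1}) so that the discarding of low-$A$ terms and the extension of the $\alpha$-summation both preserve the direction of the inequality.
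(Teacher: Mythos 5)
Your proposal is correct and takes essentially the same route as the paper: first isolate the $L^q$-small remainder $\sqrt{-1}\Lambda_\omega F_{H_i}-(\lambda\Id_E-\delta u_\infty)$ and kill it via the uniform $L^\infty(K)$ bound on the conjugated projection together with a H\"older estimate, then produce a pointwise lower bound for $\tr\big(h_i^{1/2}(\Id_E-\pi^{H_i}_{E_B})h_i^{-1/2}(\lambda\Id_E-\delta u_\infty)\big)$ by a shift-and-drop linear-algebra argument. The only difference is the choice of shift constant: the paper works directly with $\tilde u_\infty=\lambda\Id_E-\delta u_\infty=\sum_A\lambda_A(\pi_A-\pi_{A-1})$ and subtracts $\lambda_B\Id_E$, which in the $u_\infty$ picture corresponds to shifting by $\mu_B$, while you shift $u_\infty$ by $\mu_{B+1}$. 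Both choices work, since for $A\leq B$ one has $\mu_A\leq\mu_B<\mu_{B+1}$ and for $A\geq B+1$ one has $\mu_A\geq\mu_{B+1}>\mu_B$, so either shift produces the required sign pattern when dropping the low-$A$ terms and extending the $\alpha$-summation; your remark that the $\mu_{B+1}$-shift is needed (as opposed to $\mu_B$) is unnecessary, though harmless.
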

\begin{proof} Note that $\lambda_1> \lambda_2> \cdots> \lambda_l$ and
\begin{equation}
0\subset E_1 \subset E_2 \subset\cdots \subset E_l= E.
\end{equation}
At the point on the locally free part, let $\{\check{e}_1, \cdots, \check{e}_{r_B}\}$ be an orthonormal basis of $E_B$ with respect to $H_i$, and extend it to $\{\check{e}_1, \cdots, \check{e}_{r_B}, \cdots, \check{e}_r\}$ as the orthonormal basis of $E$ with respect to $H_i$. Set $\hat{e}_\alpha= h_i^{\frac{1}{2}}\check{e}_\alpha$, then $\langle \hat{e}_\alpha, \hat{e}_\beta\rangle_K= \delta_{\alpha\beta}$, i.e. $\{\hat{e}_\alpha\}_{\alpha=1}^{r}$ is an orthonormal basis with respect to $K$.

Recall $u_\infty= \sum_{A=1}^{l}\mu_A(\pi_A-\pi_{A-1})$, where $\pi_A$ is the orthogonal projection onto $E_A$ with respect to $K$, $\pi_0=0$ and $\pi_l= \Id_E$. Denote $\tilde{u}_\infty=\lambda\Id_E-\delta u_\infty$ and then \begin{equation}\label{infty2}\tilde{u}_\infty= \sum_{A=1}^{l}\lambda_A(\pi_A-\pi_{A-1}).
\end{equation}
Directly calculating gives that
\begin{equation}
\begin{split}
&\tr(h_i^{\frac{1}{2}}(\Id_E-\pi^{H_i}_{E_B})h_i^{-\frac{1}{2}}\tilde{u}_\infty)\\
=& \sum_{\alpha=r_B+1}^r \langle \tilde{u}_\infty\hat{e}_\alpha, \hat{e}_\alpha\rangle_K\\
=& \sum_{\alpha=r_B+1}^r \langle (\tilde{u}_\infty-\lambda_{B} \Id_E)\hat{e}_\alpha, \hat{e}_\alpha\rangle+\lambda_{B}(r- r_B)\\
=& \sum_{\alpha=r_B+1}^r \sum_{A=1}^l\langle (\lambda_A-\lambda_{B})(\pi_A-\pi_{A-1})\hat{e}_\alpha, \hat{e}_\alpha\rangle+\lambda_{B}(r- r_B)\\
\geq& \sum_{\alpha=r_B+1}^r \sum_{A=B+1}^l\langle (\lambda_A-\lambda_{B})(\pi_A-\pi_{A-1})\hat{e}_\alpha, \hat{e}_\alpha\rangle+\lambda_{B}(r- r_B)\\
\geq& \sum_{A=B+1}^l (\lambda_A-\lambda_{B})\tr(\pi_A-\pi_{A-1})+\lambda_{B}(r- r_B)\\
=& \sum_{A=B+1}^l \lambda_A(r_A-r_{A-1}).
\end{split}
\end{equation}
So
\begin{equation}
\begin{split}
&\int_M\tr(h_i^{\frac{1}{2}}(\Id_E-\pi^{H_i}_{E_B})h_i^{-\frac{1}{2}}(\lambda\Id_E-\delta u_\infty))\frac{\omega^n}{n!}\\
\geq & \Vol(M)\sum_{A=B+1}^l (\lambda-\delta \mu_A)(r_A-r_{A-1}).
\end{split}
\end{equation}
Putting this into (\ref{deg}) and letting $i\to \infty$, we get the desired inequality (\ref{quodeg}).
\end{proof}

Recall that $\lambda_{mU}(H_i,\omega)$  is the average of the largest eigenvalue function $\lambda_U(H_i,\omega)$ of $\sqrt{-1}\Lambda_{\omega}F_{H_{i}}$, $\lambda_{mL}(H_i,\omega)$ is the average of the smallest eigenvalue function $\lambda_L(H_i,\omega)$ of $\sqrt{-1}\Lambda_{\omega}F_{H_{i}}$. By the Chern-Weil formula (\ref{B1}), it is easy to verify that
\begin{equation}\label{lambda2}
\varliminf_{i\to \infty}\lambda_{mU}(H_i,\omega)\Vol(M,\omega)\geq \sup_{\mathcal{F}\subset E} 2\pi(\frac{\deg(\mathcal{F})}{\rank(\mathcal{F})}),
\end{equation}
where $\mathcal{F}$ runs over all the subsheaves of $E$, and
\begin{equation}\label{lambda3}
\varlimsup_{i\to \infty}\lambda_{mL}(H_i,\omega)\Vol(M,\omega)\leq \inf_{\mathcal{Q}} 2\pi(\frac{\deg(\mathcal{Q})}{\rank(\mathcal{Q})}),
\end{equation}
where $\mathcal{Q}$ runs over all the quotient sheaves of $E$.

Furthermore, we have:

\begin{lem}

\begin{equation}\label{lambda4}
\varlimsup_{i\to \infty}\lambda_{mU}(H_i,\omega)\leq \lambda_1
\end{equation}
and
\begin{equation}\label{lambda5}
\varliminf_{i\to \infty}\lambda_{mL}(H_i,\omega)\geq \lambda_l.
\end{equation}
\end{lem}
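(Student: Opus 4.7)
The plan is to pass to the limit in the $L^{q}$-convergence (\ref{lqcvg}) by using the Lipschitz continuity of eigenvalues, and then exploit the fact—established via Simpson's argument just before formula (\ref{infty})—that the eigenvalues $\mu_{1}<\mu_{2}<\cdots<\mu_{l}$ of $u_{\infty}$ are pointwise constants. Once one identifies the limiting endomorphism and its spectrum, the two inequalities (\ref{lambda4}) and (\ref{lambda5}) drop out immediately from integration over $M$.

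First, I would invoke Weyl's perturbation inequality for $K$-self-adjoint sections of $\End E$: at any point,
\begin{equation*}
|\lambda_{U}(A)-\lambda_{U}(B)|\leq |A-B|_{K},\qquad |\lambda_{L}(A)-\lambda_{L}(B)|\leq |A-B|_{K}.
\end{equation*}
Applying this pointwise with $A=\sqrt{-1}\Lambda_{\omega}F_{H_{i}}$ and $B=\lambda\Id_{E}-\delta u_{\infty}$, and combining with (\ref{lqcvg}), I would conclude
\begin{equation*}
\lim_{i\to\infty}\bigl\|\lambda_{U}(H_{i},\omega)-\lambda_{U}(\lambda\Id_{E}-\delta u_{\infty})\bigr\|_{L^{q}}=0,
\end{equation*}
and similarly for $\lambda_{L}$, for any $1\leq q<\frac{2n}{n-1}$. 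Since $(M,\omega)$ has finite volume, this also gives $L^{1}$-convergence of the eigenvalue functions.

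Second, since the eigenvalues of $u_{\infty}$ are constants $\mu_{1}<\cdots<\mu_{l}$, the limiting endomorphism $\lambda\Id_{E}-\delta u_{\infty}$ has the constant eigenvalues $\lambda_{A}=\lambda-\delta\mu_{A}$, ordered as $\lambda_{1}>\cdots>\lambda_{l}$. Consequently $\lambda_{U}(\lambda\Id_{E}-\delta u_{\infty})\equiv \lambda_{1}$ and $\lambda_{L}(\lambda\Id_{E}-\delta u_{\infty})\equiv \lambda_{l}$ on $M$. Integrating against $\frac{1}{\Vol(M,\omega)}\frac{\omega^{n}}{n!}$ and passing to the limit $i\to\infty$ yields
\begin{equation*}
\lim_{i\to\infty}\lambda_{mU}(H_{i},\omega)=\lambda_{1},\qquad \lim_{i\to\infty}\lambda_{mL}(H_{i},\omega)=\lambda_{l},
\end{equation*}
which is stronger than the claimed (\ref{lambda4}) and (\ref{lambda5}) along the subsequence $\{H_{i}\}$.

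There is essentially no substantive obstacle here: all of the difficult analysis—the $L^{\infty}$ and $L^{2}_{1}$ bounds on $u_{i}$, the extraction of the $L^{q}$-limit $u_{\infty}$, the constancy of its eigenvalues, and the saturated subsheaf structure of the spectral projections—has already been carried out in the preceding pages. The only elementary point requiring verification is the Lipschitz estimate for $\lambda_{U}$ and $\lambda_{L}$, which follows from the standard min-max characterization of eigenvalues of Hermitian endomorphisms measured in the fixed background metric $K$.
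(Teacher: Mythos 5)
Your invocation of Weyl's perturbation inequality contains a real gap. The two‑sided Lipschitz estimate $|\lambda_{U}(A)-\lambda_{U}(B)|\leq |A-B|_{K}$ requires \emph{both} endomorphisms to be self‑adjoint with respect to the \emph{same} inner product $K$, but $A=\sqrt{-1}\Lambda_{\omega}F_{H_{i}}$ is only $H_{i}$-self‑adjoint, not $K$-self‑adjoint. When the metrics differ the two‑sided estimate genuinely fails: already for $2\times 2$ matrices one can take $A$ diagonalizable with fixed real eigenvalues (hence self‑adjoint for a suitable $H_{i}$) and $B$ Hermitian for $K=\Id$, and arrange $\lambda_{U}(B)-\lambda_{U}(A)>\|A-B\|$. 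In the present context the distortion between $K$ and $H_{i}$ is not controlled — $\Vert\log(K^{-1}H_{\varepsilon_{i}})\Vert_{L^{2}}=l_{i}\to\infty$ — so the condition‑number correction that would rescue such a comparison (as in (\ref{estEK})) is unavailable. Consequently your claimed $L^{q}$-convergence of $\lambda_{U}(H_{i},\omega)$ and $\lambda_{L}(H_{i},\omega)$, and the resulting equalities $\lim\lambda_{mU}(H_{i},\omega)=\lambda_{1}$ and $\lim\lambda_{mL}(H_{i},\omega)=\lambda_{l}$, are not justified at this stage; the paper obtains those equalities only after combining (\ref{lambda4}), (\ref{lambda5}) with the Chern--Weil inequalities (\ref{lambda2}), (\ref{lambda3}) and the identification of the $E_A$ as the Harder--Narasimhan filtration (see (\ref{355}), (\ref{356})).

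What survives of your plan, and is exactly what the lemma requires, is the one‑sided estimate obtained by testing at an eigenvector — which is also the paper's argument. If $e^{i}_{1}$ is a $K$-unit eigenvector of $\sqrt{-1}\Lambda_{\omega}F_{H_{i}}$ for its largest eigenvalue, then $\lambda_{U}(H_{i},\omega)=\langle \sqrt{-1}\Lambda_{\omega}F_{H_{i}}(e^{i}_{1}),e^{i}_{1}\rangle_{K}$ (eigenvalues are metric‑independent). Splitting off the $K$-self‑adjoint endomorphism $\lambda\Id_{E}-\delta u_{\infty}$, whose largest eigenvalue is $\lambda_{1}$, and bounding the remainder by Cauchy--Schwarz gives pointwise
\begin{equation*}
\lambda_{U}(H_{i},\omega)\leq \lambda_{1}+\bigl|\sqrt{-1}\Lambda_{\omega}F_{H_{i}}-(\lambda\Id_{E}-\delta u_{\infty})\bigr|_{K}.
\end{equation*}
Integrating and invoking (\ref{lqcvg}) with $q=1$ yields (\ref{lambda4}); testing at the bottom eigenvector yields (\ref{lambda5}). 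If you replace your two‑sided Weyl appeal by this one‑sided eigenvector pairing, the proof becomes correct and coincides with the one in the paper.
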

\begin{proof}
Suppose $e_1^{i}$ is an eigenvector of $\sqrt{-1}\Lambda_{\omega}F_{H_{i}}$ with respect to $\lambda_U(H_i,\omega)$, and $|e_1^{i}|_K=1$. Of course one has
\begin{equation}
\begin{split}
\lambda_U(H_i,\omega)=&\langle \sqrt{-1}\Lambda_{\omega}F_{H_{i}}(e_1^{i}), e_1^{i}\rangle_K\\
=& \langle (\sqrt{-1}\Lambda_{\omega}F_{H_{i}}-(\lambda\Id_E- \delta u_\infty))e_1^{i}, e_1^{i}\rangle_K
+\langle (\lambda\Id_E- \delta u_\infty)e_1^{i}, e_1^{i}\rangle_K\\
\leq& |\sqrt{-1}\Lambda_{\omega}F_{H_{i}}-(\lambda\Id_E- \delta u_\infty)|_{K}+ \lambda_1.
\end{split}
\end{equation}
This means that
\begin{equation}
\varlimsup_{i\to \infty}\lambda_{mU}(H_i,\omega)\Vol(M,\omega)
=\varlimsup_{i\to \infty}\int_M \lambda_U(H_i,\omega)\frac{\omega^n}{n!}
\leq \lambda_1 \Vol(M, \omega).
\end{equation}
Immediately (\ref{lambda5}) can be proved in a similar way.
\end{proof}

Define
\begin{equation}
\nu= 2\pi\sum_{A=1}^{l-1}(\mu_{A+1}-\mu_{A})\rank(E_A)(\frac{\deg(E)}{\rank(E)}-\frac{\deg(E_A)}{\rank(E_A)}),
\end{equation}
then
\begin{equation}
\begin{split}
\nu=& 2\pi(\mu_l\deg(E)-\sum_{A=1}^{l-1}(\mu_{A+1}-\mu_{A})\deg(E_A))\\
=& 2\pi(\mu_l\deg(E)+\sum_{A=1}^{l-1}\mu_{A}\deg(E_A)-\sum_{A=2}^{l}\mu_{A}\deg(E_{A-1}))\\
=& 2\pi\sum_{A=1}^{l}\mu_{A}(\deg(E_A)-\deg(E_{A-1})).
\end{split}
\end{equation}
The fact $\|u_\infty\|_{L^2}=1$ yields that
\begin{equation}
\sum_{A=1}^l\mu_A^2(\rank(E_A)-\rank(E_{A-1}))\Vol(M)=1.
\end{equation}
Recall $\mu_A=\frac{\lambda-\lambda_A}{\delta}$. Evidently it holds that
\begin{equation}
\sum_{A=1}^l(\lambda-\lambda_A)^2(\rank(E_A)-\rank(E_{A-1}))=\frac{\delta^2}{\Vol(M)}.
\end{equation}

By (\ref{key0}) and the same discussion in \cite[Lemma 5.4]{SIM} (\cite[(3.23)]{NZ}), we know
\begin{equation}\label{key00}
\delta+ \int_M(\tr(u_\infty \sqrt{-1}\Lambda_{\omega}F_K)+\langle \zeta(u_\infty)\bar{\partial}u_\infty, \bar{\partial}u_\infty\rangle_K)\frac{\omega^n}{n!}\leq 0,
\end{equation}
where $ \zeta \in C^{\infty} (\mathbb R\times \mathbb R, \mathbb R^+)$ satisfies $\zeta(x,y)<(x-y)^{-1}$ whenever $x>y$.
Notice that
\begin{equation}
2\pi\deg(E_B)=\int_M(\tr(\pi_B\cdot \sqrt{-1}\Lambda_{\omega}F_K)- |\bar{\partial}\pi_B|_K^2)\frac{\omega^n}{n!}.
\end{equation}
 So by (\ref{key00}) and following the arguments in \cite[p.~793-794]{LZ}, we obtain
\begin{equation}\label{eq320}
\begin{split}
\nu = &\int_M \tr(u_{\infty}\sqrt{-1}\Lambda_{\omega}F_K)+ \lag \sum^{l-1}_{A=1}(\mu_{A+1}-\mu_{A})
(dP_{A})^2(u_{\infty})(\bar{\partial }u_{\infty}), \bar{\partial }u_{\infty}\rag_{K}\frac{\omega^{n}}{n!}\\
\leq & -\delta,
\end{split}
\end{equation}
where the function $dP_{A}: \mathbb R\times \mathbb R\lraw \mathbb{R}$ is defined by
\begin{equation*}
dP_{A}(x,y)=\left\{
\begin{split}
&\frac{P_{A}(x)-P_{A}(y)}{x-y},\qquad  x\neq y;\\
&\qquad P_{A}'(x), \qquad \qquad x=y.
\end{split}\right.
\end{equation*}
Taking into account $\tr u_\infty \equiv 0$, one has
\begin{equation} \sum_{A=1}^l \mu_A(\rank(E_A)-\rank(E_{A-1}))=0.\end{equation}Then
\begin{equation}\label{key1}
\begin{split}
0\geq &\delta^2+\delta\nu\\
=&\delta^2+2\pi\sum_{A=1}^l(\lambda-\lambda_A)(\deg(E_A)-\deg(E_{A-1}))\\
=&\sum_{A=1}^l(\lambda-\lambda_A)(2\pi(\deg(E_A)-\deg(E_{A-1}))-\lambda_A(r_A-r_{A-1})\Vol(M)).
\end{split}
\end{equation}
At the same time, we can conclude that
\begin{lem}
\begin{equation}\label{346}
\sum_{A=1}^l(\lambda-\lambda_A)(2\pi(\deg(E_A)-\deg(E_{A-1}))-\lambda_A(r_A-r_{A-1})\Vol(M))\geq 0.
\end{equation}
\end{lem}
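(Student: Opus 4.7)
The plan is to prove (\ref{346}) by Abel summation, using the subsheaf degree bound (\ref{subdeg}) from Lemma \ref{L1} as the main input. Write $a_A=\lambda-\lambda_A=\delta\mu_A$ and $b_A=2\pi(\deg(E_A)-\deg(E_{A-1}))-\lambda_A(r_A-r_{A-1})\Vol(M)$, so the quantity $S$ in (\ref{346}) equals $\sum_{A=1}^{l}a_A b_A$. Set the partial sums
\begin{equation}
B_A=\sum_{k=1}^{A} b_k=2\pi\deg(E_A)-\Vol(M)\sum_{k=1}^{A}\lambda_k(r_k-r_{k-1}),
\end{equation}
with the convention $B_0=0$. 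Lemma \ref{L1} (inequality (\ref{subdeg})) gives exactly $B_A\leq 0$ for $1\leq A\leq l-1$, which is the one-sided sign information we will feed into Abel summation.

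The crucial auxiliary observation is that the endpoint $B_l$ vanishes. Since $\tr u_i=0$ passes to $\tr u_\infty\equiv 0$, and since by (\ref{infty}) the eigenvalues of $u_\infty$ are $\mu_A$ with multiplicities $r_A-r_{A-1}$, we obtain $\sum_{k=1}^{l}\mu_k(r_k-r_{k-1})=0$; substituting $\lambda_k=\lambda-\delta\mu_k$ gives $\sum_{k=1}^{l}\lambda_k(r_k-r_{k-1})=\lambda r$. Combined with the normalization $\lambda=2\pi\deg(E)/(r\Vol(M))$, this yields $B_l=2\pi\deg(E)-\lambda r\Vol(M)=0$.

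Abel summation now gives
\begin{equation}
S=\sum_{A=1}^{l}a_A(B_A-B_{A-1})=a_lB_l-a_1B_0+\sum_{A=1}^{l-1}(a_A-a_{A+1})B_A=\sum_{A=1}^{l-1}(a_A-a_{A+1})B_A.
\end{equation}
Since $\mu_1<\mu_2<\cdots<\mu_l$, we have $a_A-a_{A+1}=\delta(\mu_A-\mu_{A+1})<0$, while $B_A\leq 0$ by (\ref{subdeg}). Each summand is therefore nonnegative, and (\ref{346}) follows.

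I do not expect any deep obstacle: the only non-obvious ingredient is identifying $B_l=0$, which reflects the compatibility between the traceless normalization of $u_\infty$ and the choice of $\lambda$ as the slope of $E$. Everything else is bookkeeping via Abel summation applied to the subsheaf inequality already established. One could alternatively derive the same bound from the quotient inequality (\ref{quodeg}) of Lemma \ref{L2} by dualizing the partial sums, which provides a consistency check, but the subsheaf version above is the shortest route.
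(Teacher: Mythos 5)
Your proof is correct and is essentially the same Abel-summation argument the paper uses: both rewrite the sum with partial sums $B_A=2\pi\deg(E_A)-\Vol(M)\sum_{k\le A}\lambda_k(r_k-r_{k-1})$, invoke $B_A\le 0$ from (\ref{subdeg}), note $B_l=0$ via $\tr u_\infty\equiv 0$ and $\lambda=2\pi\mu_\omega(E)/\Vol(M)$ (the paper's (\ref{degeq})), and conclude from $\lambda_{A+1}<\lambda_A$.
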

\begin{proof}
Computing straightforwardly gives that
\begin{equation}\label{key2}
\begin{aligned}
&\sum_{A=1}^{l}(\lambda-\lambda_A)(2\pi(\deg(E_A)-\deg(E_{A-1}))-\Vol(M)\cdot\lambda_A(r_A-r_{A-1}))\\
=&\sum_{A=1}^{l}(\lambda-\lambda_A)\bigg(2\pi\deg(E_A)-\Vol(M)\cdot\sum_{B=1}^A\lambda_B(r_B-r_{B-1})\\
&-\bigg(2\pi\deg(E_{A-1})-\Vol(M)\cdot\sum_{B=1}^{A-1}\lambda_B(r_B-r_{B-1})\bigg)\bigg)\\
=&\sum_{A=1}^{l}(\lambda-\lambda_A)\bigg(2\pi\deg(E_A)-\Vol(M)\cdot\sum_{B=1}^A\lambda_B(r_B-r_{B-1})\bigg)\\
&-\sum_{A=1}^{l-1}(\lambda-\lambda_{A+1})\bigg(2\pi\deg(E_{A})-\Vol(M)\cdot\sum_{B=1}^A\lambda_B(r_B-r_{B-1})\bigg)\\
=&\sum_{A=1}^{l-1}(\lambda-\lambda_{A}-(\lambda-\lambda_{A+1}))\bigg(2\pi\deg(E_A)-\Vol(M)\cdot\sum_{B=1}^A\lambda_B(r_B-r_{B-1})\bigg)\\
&+(\lambda-\lambda_l)\bigg(2\pi\deg(E)-\Vol(M)\cdot\sum_{A=1}^l\lambda_B(r_B-r_{B-1})\bigg)\\
=&\sum_{A=1}^{l-1}(\lambda_{A+1}-\lambda_A)\bigg(2\pi\deg(E_A)-\Vol(M)\cdot\sum_{B=1}^A\lambda_B(r_B-r_{B-1})\bigg)\\
\geq &0,
\end{aligned}
\end{equation}
where the inequality is based on (\ref{subdeg}) and $\lambda_{A+1}<\lambda_{A}$, in the last equality we have used
\begin{equation}\label{degeq}
2\pi\deg(E)=\Vol(M)\cdot\sum_{A=1}^l\lambda_A(r_A-r_{A-1}).\qedhere
\end{equation}
\end{proof}

Since $\lambda_{A+1}<\lambda_A$, combining (\ref{subdeg}), (\ref{key1}), (\ref{key2}) and (\ref{degeq}), one can find that
\begin{equation}
2\pi\deg(E_A)= \Vol(M)\cdot\sum_{B=1}^A\lambda_B(r_B-r_{B-1})
\end{equation}
for $1\leq A\leq l$. Consequently we have
\begin{equation}\label{lambda06}
\frac{2\pi(\deg(E_A)-\deg(E_{A-1}))}{r_A-r_{A-1}}= \Vol(M)\cdot\lambda_A.
\end{equation}

By (\ref{lambda2}), (\ref{lambda3}), (\ref{lambda4}) and (\ref{lambda5}), we establish
\begin{equation}
\begin{split}
\sup_{\mathcal{F}\subset E} 2\pi(\frac{\deg(\mathcal{F})}{\rank(\mathcal{F})})\leq &\varliminf_{i\to \infty}\lambda_{mU}(H_i,\omega)\Vol(M,\omega)\\
\leq &\lambda_1 \Vol(M, \omega)= 2\pi\cdot\frac{\deg(E_1)}{\rank(E_1)}\\
\leq &\sup_{\mathcal{F}\subset E} 2\pi(\frac{\deg(\mathcal{F})}{\rank(\mathcal{F})})
\end{split}
\end{equation}
and
\begin{equation}
\begin{split}
\inf_{\mathcal{Q}} 2\pi(\frac{\deg(\mathcal{Q})}{\rank(\mathcal{Q})})\geq & \varlimsup_{i\to \infty}\lambda_{mL}(H_i,\omega)\Vol(M, \omega)\\
\geq &\lambda_l \Vol(M, \omega)= 2\pi\cdot\frac{\deg(E)-\deg(E_{l-1})}{\rank(E)-\rank(E_{l-1})}\\
\geq &\inf_{\mathcal{Q}} 2\pi(\frac{\deg(\mathcal{Q})}{\rank(\mathcal{Q})}).
\end{split}
\end{equation}
Hence it follows that
\begin{equation}\label{355}
\begin{split}
\lim_{i\to \infty}\lambda_{mU}(H_i,\omega)\Vol(M,\omega) & =\lambda_1 \Vol(M, \omega)= 2\pi\cdot\frac{\deg(E_1)}{\rank(E_1)}\\
& = \max_{\mathcal{F}\subset E} 2\pi(\frac{\deg(\mathcal{F})}{\rank(\mathcal{F})})
\end{split}
\end{equation}
and
\begin{equation}\label{356}
\begin{split}
\lim_{i\to \infty}\lambda_{mL}(H_i,\omega)\Vol(M,\omega) & = \lambda_l \Vol(M, \omega)= 2\pi\cdot\frac{\deg(E/E_{l-1})}{\rank(E/E_{l-1})}\\
& = \min_{\mathcal{Q}} 2\pi(\frac{\deg(\mathcal{Q})}{\rank(\mathcal{Q})}).
\end{split}
\end{equation}

Assume $\mathcal{F}$ is a subsheaf of $E$ with $\rank(\mathcal{F})> r_{A-1}$ for some $A\geq 2$. Clearly we have already known
\begin{equation}
\begin{split}
2\pi\deg(\mathcal{F})=&\int_M(\tr(\pi^{H_i}_{\mathcal{F}}\sqrt{-1}\Lambda_{\omega}F_{H_{i}})-|\bar{\partial}\pi^{H_i}_{\mathcal{F}}|_{H_{i}}^2)\frac{\omega^n}{n!}\\
\leq&\int_M\tr(h_i^{\frac{1}{2}}\pi^{H_i}_{\mathcal{F}}h_i^{-\frac{1}{2}}(\sqrt{-1}\Lambda_{\omega}F_{H_{i}}-\tilde{u}_\infty))\frac{\omega^n}{n!}\\
&+ \int_M\tr(h_i^{\frac{1}{2}}\pi^{H_i}_{\mathcal{F}}h_i^{-\frac{1}{2}}\tilde{u}_\infty)\frac{\omega^n}{n!}.
\end{split}
\end{equation}
Notice that $\mathcal{F}$ is a subbundle of $E$ away from the singular set $Sing(\mathcal{F})$. Suppose $x\in M\setminus Sing(\mathcal{F})$. We choose $\{\check{e}_1, \cdots, \check{e}_{\rank(\mathcal{F})}\}$ as the $H_i$-orthonormal basis of $\mathcal{F}|_x$, and extend it to $\{\check{e}_1, \cdots, \check{e}_{\rank(\mathcal{F})}, \cdots, \check{e}_r\}$ as the $H_i$-orthonormal basis of $E|_x$. Set $\hat{e}_\alpha= h_i^{\frac{1}{2}}\check{e}_\alpha$, so $\langle \hat{e}_\alpha, \hat{e}_\beta\rangle_K= \delta_{\alpha\beta}$, i.e. $\{\hat{e}_\alpha\}_{\alpha=1}^{r}$ is an orthonormal basis with respect to $K$.
As before, we also have
\begin{equation}
\begin{split}
&\tr(h_i^{\frac{1}{2}}\pi^{H_i}_{\mathcal{F}}h_i^{-\frac{1}{2}}\tilde{u}_\infty)\\
=&\sum_{\alpha=1}^{\rank(\mathcal{F})}\langle \tilde{u}_\infty(\hat{e}_\alpha),  \hat{e}_\alpha\rangle_K \\
=&\sum_{\alpha=1}^{\rank(\mathcal{F})}\langle (\tilde{u}_\infty-\lambda_A\Id_E)\hat{e}_\alpha,  \hat{e}_\alpha\rangle_K+ \lambda_A\cdot \rank(\mathcal{F})\\
=&\sum_{\alpha=1}^{\rank(\mathcal{F})}\sum_{B=1}^{l}(\lambda_B-\lambda_A)\langle (\pi_B-\pi_{B-1})(\hat{e}_\alpha),  \hat{e}_\alpha\rangle_K+ \lambda_A\cdot \rank(\mathcal{F})\\
\leq &\sum_{\alpha=1}^{\rank(\mathcal{F})}\sum_{B=1}^{A-1}(\lambda_B-\lambda_A)\langle (\pi_B-\pi_{B-1})(\hat{e}_\alpha),  \hat{e}_\alpha\rangle_K+ \lambda_A\cdot \rank(\mathcal{F})\\
\leq &\sum_{B=1}^{A-1}(\lambda_B-\lambda_A)\sum_{\alpha=1}^{r}\langle (\pi_B-\pi_{B-1})(\hat{e}_\alpha),  \hat{e}_\alpha\rangle_K+ \lambda_A\cdot \rank(\mathcal{F})\\
= & \sum_{B=1}^{A-1}(\lambda_B-\lambda_A)\tr(\pi_B-\pi_{B-1})+ \lambda_A\cdot \rank(\mathcal{F})\\
= & \sum_{B=1}^{A-1}\lambda_B(r_B-r_{B-1})+ \lambda_A\cdot (\rank(\mathcal{F})-r_{A-1}).
\end{split}
\end{equation}

Then
\begin{equation}\label{4.1.1}
\begin{split}
2\pi\deg(\mathcal{F})\leq &(\sum_{B=1}^{A-1}\lambda_B(r_B-r_{B-1})+ \lambda_A\cdot (\rank(\mathcal{F})-r_{A-1}))\Vol(M)\\
=& 2\pi\sum_{B=1}^{A-1}(\deg(E_B)-\deg(E_{B-1}))\\
& + \lambda_A\cdot (\rank(\mathcal{F})-r_{A-1})\Vol(M)\\
=&  2\pi \deg(E_{A-1})+ \lambda_A\cdot (\rank(\mathcal{F})-r_{A-1})\Vol(M).
\end{split}
\end{equation}
It follows that
\begin{equation}\label{HN11}
\begin{split}
\frac{2\pi(\deg(\mathcal{F})-\deg(E_{A-1}))}{\rank(\mathcal{F})-\rank(E_{A-1})}\leq & \frac{2\pi(\deg(E_A)-\deg(E_{A-1}))}{\rank(E_A)-\rank(E_{A-1})}\\
< & \lambda_{A-1}\Vol(M).
\end{split}
\end{equation}

 Next we are going to show that
\begin{equation*}
0= E_0\subset E_1 \subset E_2 \subset\cdots \subset E_l=E
\end{equation*}
is exactly the Harder-Narasimhan filtration of $(E, \bar{\partial}_E)$.
Obviously (\ref{355}) tells us that
\begin{equation}
\frac{\deg(E_1)}{\rank(E_1)}= \max_{\mathcal{F}\subset E} (\frac{\deg(\mathcal{F})}{\rank(\mathcal{F})}).
\end{equation}
If $\rank(\mathcal{F})> \rank(E_1)$,  from (\ref{HN11}), we get
\begin{equation}
\deg(\mathcal{F})-\deg (E_1)<\frac{\rank(\mathcal{F})-\rank(E_1)}{\rank(E_1)}\deg(E_1),
\end{equation} and then \begin{equation}\frac{\deg(\mathcal{F})}{\rank(\mathcal{F})}< \frac{\deg(E_1)}{\rank(E_1)}.\end{equation}
Consider
\begin{equation}
0 \subset E_B \subset \hat{\mathcal{F}} \subset E,
\end{equation}
where $\rank (\hat{\mathcal{F}})>\rank (E_{B})$ and $B\geq 1$.
Using (\ref{HN11}) again, one can see
\begin{equation}
\frac{\deg(\hat{\mathcal{F}})-\deg(E_{B})}{\rank(\hat{\mathcal{F}})-\rank(E_{B})}\leq \frac{\deg(E_{B+1})-\deg(E_{B})}{\rank(E_{B+1})-\rank(E_{B})},
\end{equation}
and if $\rank (\hat{\mathcal{F}})>\rank (E_{B+1})$, then
\begin{equation}
\frac{\deg(\hat{\mathcal{F}})-\deg(E_{B})}{\rank(\hat{\mathcal{F}})-\rank(E_{B})}< \frac{\deg(E_{B+1})-\deg(E_{B})}{\rank(E_{B+1})-\rank(E_{B})}.
\end{equation}
Therefore, we confirm that
\begin{equation*}
0= E_0\subset E_1 \subset E_2 \subset\cdots \subset E_l=E
\end{equation*}
is the Harder-Narasimhan filtration of $(E, \bar{\partial}_E)$.

\begin{proof}[Proof of Theorem \ref{theorem1}] By the previous argument, there holds
\begin{equation}
\begin{aligned}
\lambda \Id_E-\delta u_\infty&=\lambda\sum_{A=1}^l(\pi_A-\pi_{A-1})-\delta\sum_{A=1}^l\mu_A(\pi_A-\pi_{A-1})\\
&=\sum_{A=1}^l\lambda_A(\pi_A-\pi_{A-1})\\
&=\frac{2\pi}{\Vol(M,\omega)}\sum_{A=1}^l\mu_{\omega}(E_A/E_{A-1})(\pi_A-\pi_{A-1})\\
&=\frac{2\pi}{\Vol(M,\omega)}\Phi^{HN}_\omega(E, K).
\end{aligned}
\end{equation}
Together with (\ref{lqcvg}), we have for any $0<q<\frac{2n}{n-1}$,
\begin{equation}\label{370}
\lim_{i\rightarrow \infty} \left\Vert\sqrt{-1}\Lambda_{\omega}F_{H_{i}}-\frac{2\pi}{\Vol(M,\omega)}\Phi^{HN}_\omega(E,K)\right\Vert_{L^q(K)}=0.
\end{equation}
Since $|\sqrt{-1}\Lambda_{\omega} F_{H_{i}}|_K$ is uniformly bounded, (\ref{eqthm1}) follows.
\end{proof}

\section{HN-positivity and rational connectedness} Theorem \ref{RC} provides a metric criterion for rational connectedness.
\begin{lem}\label{rcslop}
 Let $M$ be a K\"ahler manifold. Then the following statements are equivalent:
\begin{enumerate}
\item[(1)] $M$ is projective and rationally connected;
\item[(2)] There exists a Gauduchon (resp. balanced)  metric $\omega$ on $M$ such that $\mu_L(T^{1,0}M,\omega)>0$, i.e. $T^{1,0}(M)$ is HN-positivity.
\end{enumerate}
\end{lem}
According to Theorem \ref{cor0}, one can see that the statement of Theorem \ref{RC} is equivalent to that of  Lemma \ref{rcslop}. To prove Lemma \ref{rcslop}, we need the following criteria for rational connectedness.
\begin{prop}\label{rationc}
 Let $M$ be a compact projective manifold. Then the  following statements are equivalent:
\begin{enumerate}
\item[(a)] $M$ is  rationally connected;
\item[(b)] There exist a movable curve $C$ and a constant $\delta>0$, such that for any coherent analytic quotient sheaf $\mathcal Q$ of $T^{1,0}M$, $c_1(\mathcal Q)\cdot[C]\geq\delta \cdot\rank(Q)$;
\item[(c)] For any $1\leq p\leq  \dim ^{\mathbb{C}} M$, any invertible subsheaf $\mathcal F\subset \Lambda^{p, 0}M$ is not pseudoeffective.
\end{enumerate}
\end{prop}

\begin{rem}\label{rem401}
Criterion $(c)$ was first given in \cite[Criterion 1.1]{CDP14}, together with the other two criteria.
$(b)\Rightarrow (a)$ was likely first shown as  a special case of \cite[Theorem 1.1]{CP2}. While $(a)\Leftrightarrow (b)$ was likely first appeared in \cite[Proposition 1.4]{Cam16}, which has a generalization for the orbifold case (\cite[Theorem 1.1]{Cam16}). In fact, even without using any results proved in \cite{CP2} or \cite{Cam16}, one can easily prove $(a)\Rightarrow (b)$, by using the following well known criterion of rational connectedness (see \cite[Theorem 3.7]{Kol96}):
\begin{itemize}
\item[$(b')$] $M$ is projective and there is a rational curve $f:\mathbb CP^1\rightarrow M$ such that $f^*T^{1,0}M$ is ample, namely $f^*T^{1,0}M=\oplus_{i=1}^{n} \mathcal O_{\mathbb CP^1}(a_i)$ for some positive integers $a_1\leq \cdots\leq a_n$.
\end{itemize}
Moreover, by the proof of $(a)\Rightarrow (b')$, $f$ in $(b')$ can be chosen to be a general member of an analytic family $\{f_t|t\in S\}$ of rational curves such that $\bigcup_{t\in S} f_t(\mathbb CP^1)=M$.

For the readers' convenience, we present a brief proof of $(a)\Rightarrow (b)$ here. Let $f$ be a rational curve in $(b')$ and $C=f(\mathbb CP^1)$, then clearly $C$ is movable and $f_\star[\mathbb CP^1]=m[C]$ for some positive integer $m$. We claim that if $\mathcal Q$ is a quotient sheaf of rank $p\geq 1$ of $T^{1,0}M$, then $c_1(\mathcal Q)\cdot[C]\geq \frac{p}{m}$. In fact, since $\det \mathcal Q=(\Lambda^p \mathcal Q)^{**}$, we have a natural morphism $\phi:\Lambda^p (T^{1,0}M)\rightarrow \det \mathcal Q$, which is surjective away from a proper analytic subset $Z$ of $M$. Up to replace $f$ by some sufficiently close $f_t$, we can assume that $C\not\subset Z$. Then $f^{-1}(Z)$ is empty or a proper analytic subset. Furthermore, the pullback of $\phi$, denoted by $\rho$, is surjective away from $f^{-1}(Z)$. Let $\mathcal L=\mathrm{Im} \rho$. Then $\mathcal L$ is a coherent  anlalytic sheaf of rank $1$. Moreover $\mathcal L$ is a quotient sheaf of $f^*T^{1,0}M$ as well as a subsheaf of $f^*\det \mathcal Q$. Noting that $f^*T^{1,0}M=\oplus_{i=1}^{n} \mathcal O_{\mathbb CP^1}(a_i)$ and $(f^*\det \mathcal Q)/\mathcal L$ is a torsion sheaf, we have
\begin{equation}
a_1+\cdots+a_p\leq c_1(\mathcal L)\cdot[\mathbb CP^1]\leq c_1(f^*\det\mathcal Q)\cdot[\mathbb CP^1]=mc_1(\mathcal Q)\cdot[C].
\end{equation}

\end{rem}

Here we review some notions and results related to the pseudo-effectiveness and movability.
Let $M$ be a compact complex manifold of dimension $n\geq 2$. We denote  $A^{p,q}(M,\mathbb C)$ to be the space of all smooth $(p,q)$-forms on $M$ and $A^{p,p}(M,\mathbb R)$ the space of all real smooth $(p,p)$-forms on $M$.
\begin{enumerate}
\item The real Bott-Chern cohomology group $H_{BC}^{p, p}(M,\mathbb R)$ is
\begin{equation}
H_{BC}^{p, p}(M,\mathbb R)=\frac{\{\theta\in  A^{p,p}(M,\mathbb R) \; | \; d\theta=0\}}{\sqrt{-1}\pbp A^{p-1,p-1}(M,\mathbb R)},
\end{equation}
and the real Aeppli cohomology group $H_{A}^{p, p}(M,\mathbb R)$ is
\begin{equation}
H_{A}^{p, p}(M,\mathbb R)=\frac{\{\theta\in  A^{p,p}(M,\mathbb R) \; | \; \sqrt{-1}\pbp\theta=0\}}{(\partial A^{p-1,p}(M, \mathbb C)+\bar\partial A^{p,p-1}(M, \mathbb C))\cap A^{p,p}(M,\mathbb R)}.
\end{equation}
The real Bott-Chern and Aeppli cohomology groups coincide with the corresponding cohomology groups of currents, respectively. Thus we use both smooth forms and currents as the representatives.
\item The Balanced cone is the open convex cone in $H_{BC}^{n-1,n-1}(M,\mathbb R)$
\begin{equation}
\mathcal B=\{[\omega^{n-1}]\in H_{BC}^{n-1,n-1}(M,\mathbb R) \; | \; \omega \text{ is a Balanced metric}\},
\end{equation}
and the Gauduchon cone is the open cone in $H_{A}^{n-1,n-1}(M,\mathbb R)$
\begin{equation}
\mathcal G=\{[\omega^{n-1}]\in H_{A}^{n-1,n-1}(M,\mathbb R) \; | \; \omega \text{ is a Gauduchon metric}\}.
\end{equation}
\item The pseudoeffective cone is the closed  convex cone in $H_{BC}^{1,1}(M,\mathbb R)$
\begin{equation}
\mathcal E=\{\alpha\in H_{BC}^{1,1}(M,\mathbb R) \; | \; \exists \text{ a positive $(1,1)$-current } T\in \alpha\}.
\end{equation}
\item Assume that $M$ is K\"ahler. The movable cone $\mathcal M\subset H^{n-1,n-1}_{BC}(M,\mathbb R)$ is the closure of the convex cone generated by currents of the form
\begin{equation}
\mu_\star(\tilde\omega_1\wedge\cdots\wedge\tilde\omega_{n-1}),
\end{equation}
where $\mu:\tilde M\rightarrow M$ is an arbitrary modification and $\tilde\omega_1,\cdots,\tilde \omega_{n-1}$ are K\"ahler forms on $\tilde M$.
\item Assume that $M$ is projective. A curve $C$ in $M$ is said to be movable if $C$ belongs to an analytic family $\{C_t \; | \; t\in S\}$  of curves in $M$ such that $\bigcup_{t\in S} C_t=M$.
Let ${\rm ME(M)}$ be the convex cone in
\begin{equation}
N_1(M)=H^{n-1,n-1}_{BC}(M,\mathbb R)\cap (H^{2n-2}(M,\mathbb Z)\otimes_{\mathbb Z} \mathbb R)
\end{equation}
generated by all movable curves. In fact,  \cite[Theorem 2.4]{BDPP13} states
\begin{equation}
\overline{{\rm ME(M)}}=\mathcal M\cap N_1(M).
\end{equation}
\end{enumerate}

Generally, we have the Poincar\'e  duality pairing
\begin{equation}\label{pd1}
H_{BC}^{p,p}(M,\mathbb R)\times H_A^{n-p,n-p}(M,\mathbb R)\rightarrow \mathbb R,\qquad (\alpha,\beta)\mapsto \int_M\alpha\wedge \beta.
\end{equation}
When $M$ is K\"ahler,  the Poincar\'e duality pairing can be written as
\begin{equation}\label{pd2}
H_{BC}^{p,p}(M,\mathbb R)\times H_{BC}^{n-p,n-p}(M,\mathbb R)\rightarrow \mathbb R,\qquad (\alpha,\beta)\mapsto \int_M\alpha\wedge \beta.
\end{equation}

Some useful criteria for pseudo-effectiveness are summarised below:
\begin{prop}[Dual of the pseudoeffective cone]\label{dualpef}
\item[(1)] Under the Poincar\'e duality pairing \eqref{pd1},
the dual $\mathcal E^{\vee}$ of $\mathcal E$ is  equal to the closure $\overline{\mathcal G}$ of the Gauduchon cone;
\item[(2)] If $M$ is K\"ahler, then with respect to the Poincar\'e duality pairing \eqref{pd2},
the cone duality $\mathcal E^{\vee}=\overline{\mathcal B}$ holds;
\item[(3)] If $M$ is projective, then the cones  $\mathcal E$ and $\mathcal M$ are dual via the Poincar\'e duality pairing \eqref{pd2}. Consequently $\mathcal M=\overline{\mathcal B}$.
\end{prop}
Indeed, $(1)$ is due to Lamari (\cite[Lemma 3.3]{Lam99}, see also \cite[Lemma 2.1]{Tos16}); $(2)$ is due to Fu-Xiao (\cite[Remark 3.4 and Theorem A.2]{FX14}) for the general K\"ahler case; $(3)$ is due to Nystr\"{o}m (\cite[Theorem A and Corollary A]{Nys19}.  Toma (\cite[Theorem]{Tom10}) also showed $\overline{\rm ME(M)}\subset \overline{\mathcal B}$ when $M$ is projective.

A holomorphic line bundle $L$ over a compact complex manifold $M$ is said to be pseudoeffective if its Chern class $c_1(L)$ is pseudoeffective. According to Propostion \ref{dualpef}, $\alpha\in H^{1,1}_{BC}(M,\mathbb R)$ is pseudoeffictive if and only if $\alpha\cdot[\omega^n]\geq 0$ for any Gauduchon metric $\omega$ on $M$. This result has been noticed by Yang \cite[Proposition 3.1 or 3.2]{Yang19}. Certainly it follows that
\begin{cor}
Let $L$ be a holomorphic line bundle over a compact complex manifold $M$. Then $L$ is not pseudoeffective if and only if there exists a Gauduchon metirc $\omega$ on $M$ such that $\deg_{\omega}(L)<0$.
\end{cor}

%This result has been noticed by Yang \cite[Proposition 3.1 or 3.2]{Yang19}.
%Yang's Proposition 3.1 is the contraposition of Corollary 5.11, and 3.2 is the dual bundle version of Corollary 5.11.

\begin{proof}[Proof of Lemma \ref{rcslop}]
Write $n= \dim ^{\mathbb{C}}M$. Assume that $(2)$ holds. By Theorem \ref{cor0} and Kobayashi-Wu's vanishing theorem (\cite{KobW70}), we know that $\Lambda^{p,0}M$ is mean curvature negative and then $H_{\bar{\partial } }^{p,0}(M)=0$ for every $1\leq p \leq n$. Since $M$ is K\"ahler, the projectivity follows from $H_{\bar{\partial } }^{2,0}(M)=H_{\bar{\partial } }^{0,2}(M)=0$ and the Kodaira theorem (\cite[Theorem 1]{Kod}). Let $L\subset\Lambda^{p,0}M$  be an invertible subsheaf. Then
\begin{equation}
\deg_\omega(L)\leq \mu_U(\Lambda^{p,0}M,\omega)\leq -p\mu_L(T^{1,0}M,\omega)<0,
\end{equation}
and consequently $L$ is not pseudoeffective. So we have $(2)\Rightarrow (1)$.

Assume that $(1)$ holds. On account of Proposition \ref{rationc}, we can find a movable curve $C$ and $\delta>0$ such that $c_1(\mathcal Q)\cdot[C]\geq \delta \cdot\rank(\mathcal Q)$ for any quotient sheaf $\mathcal Q$ of $T^{1,0}M$. One can always choose a K\"ahler metric $\omega_0$ and a constant $\varepsilon>0$, such that
\begin{equation}
\varepsilon\mu_L(T^{1,0}M,\omega_0)\geq-{\textstyle\frac{\delta}{2(n-1)!}}.
\end{equation}
Then for any quotient sheaf $\mathcal Q$ of $T^{1,0}M$, we have
\begin{equation}\label{slptx}
\begin{aligned}
c_1(\mathcal Q)\cdot([C]+\varepsilon[\omega_0^{n-1}])&=c_1(\mathcal Q)\cdot[C]+(n-1)!\varepsilon\mu_{\omega_0}(\mathcal Q)\cdot\rank(\mathcal Q)\\
&\geq(\delta+ (n-1)!\varepsilon\mu_L(T^{1,0}M,\omega_0))\cdot\rank(\mathcal Q)\\
&\geq {\textstyle\frac{\delta}{2}}\cdot\rank(\mathcal Q).
\end{aligned}
\end{equation}
The movability of $C$ means $[C]\in\mathcal M$. Note that Proposition \ref{dualpef} says $\mathcal M=\overline{\mathcal B}$. Thus $[C]\in \overline{\mathcal B}$. Since $[\omega_0^{n-1}]\in\mathcal B$, we observe $[C]+\varepsilon [\omega_0^{n-1}]\in \mathcal B$. Namely there is a balanced metric $\omega$ such that $[\omega^{n-1}]=[C]+\varepsilon [\omega_0^{n-1}]$. By (\ref{slptx}), it is evident that
\begin{equation}
\mu_L(T^{1,0}M,\omega)\geq {\textstyle\frac{\delta}{2(n-1)!}}.
\end{equation}
Therefore, we conclude $(1)\Rightarrow (2)$.
\end{proof}

In the following, we will prove that the uniformly RC-positivity  implies the mean curvature positivity.

\begin{prop}\label{RC01}
Let $(E, \bar{\partial}_{E})$ be a rank $r$ holomorphic vector bundle over an $n$-dimensional compact complex manifold $M$, and $H$ be a Hermitian metric on $E$. If $H$ is uniformly RC-positive, then it must be mean curvature positive, i.e. there exists a Hermitian metric $\omega $ on $M$ such that
\begin{equation}
\sqrt{-1}\Lambda_{\omega }F_{H}>0.
\end{equation}
\end{prop}

\begin{proof}
For every non-zero vector $v\in T_{x}^{1,0}M$,  $x\in M$, we know that $ F_{H}(v, \bar{v})\in \Gamma(\End (E))$ is $H$-selfadjoint. Explicitly  this tells us that all the eigenvalues of $ F_{H}(v, \bar{v})$ are real. Given a Hermitian metric $\omega_{0}$ on $M$, we set
\begin{equation}
\mu_{H, \omega_{0}, x}:= \sup_{v\in T_{x}^{1,0}M\setminus \{0\}}\lambda_{min}(\frac{F_{H}(v, \bar{v})}{-\sqrt{-1}\omega_{0}(v, \bar{v})}),
\end{equation}
\begin{equation}
\mu_{H, \omega_{0}}:=\min_{x\in M} \mu_{H, \omega_{0}, x}
\end{equation}
and
\begin{equation}
\nu_{H, \omega_{0}}:=\min_{x\in M} \inf_{v\in T_{x}^{1,0}M\setminus \{0\}}\lambda_{min}(\frac{F_{H}(v, \bar{v})}{-\sqrt{-1}\omega_{0}(v, \bar{v})}),
\end{equation}
where $\lambda_{min}$ stands for the smallest eigenvalue. Under the assumption that $H$ is uniformly RC-positive, there holds that
\begin{equation}
\mu_{H, \omega_{0}, x}\geq \mu_{H, \omega_{0}}>0.
\end{equation}

For any point $x\in M$,  choose a local $\omega_{0}$-orthonormal frame $\{e_{\alpha }\}_{\alpha =1}^{n}$ of $T^{1, 0}M$ around $x$ such that $\lambda_{min }(F_{H}(e_{1}(x), \overline{e_{1}(x)}))=\mu_{H, \omega_{0}, x}$. Let $\{\theta^{\alpha }\}_{\alpha =1}^{n}$ be the dual frame of $\{e_{\alpha }\}_{\alpha =1}^{n}$ and $a$ be a positive number. We construct a local Hermitian metric $\omega_{x, a}$ on $M$ by
\begin{equation}
\omega_{x, a}=\sqrt{-1}(\theta^{1}\wedge \overline{\theta^{1}}+\sum_{\alpha =2}^{n}a^{-1}\cdot \theta^{\alpha }\wedge \overline{\theta^{\alpha }}).
\end{equation}
If $0< a <\frac{1}{2(n-1)}\frac{\mu_{H, \omega_{0}}}{\max \{-\nu_{H, \omega_{0}}, 0\}}$, then
\begin{equation}\begin{aligned}
\sqrt{-1}\Lambda_{\omega_{x, a}}F_{H}(x)=& F_{H}(e_{1}(x), \overline{e_{1}(x)}) +\sum_{\alpha =2}^{n}a\cdot F_{H}(e_{\alpha }(x), \overline{e_{\alpha }(x)}) \\
\geq &  (\mu_{H, \omega_{0}} +(n-1)a\nu_{H, \omega_{0}})\Id_{E}\\ > & \frac{1}{2}\mu_{H, \omega_{0}}\Id_{E}.\\
\end{aligned}
\end{equation}
Hence for each point $x\in M$, one can find a neighborhood $B_{x}$ centered at $x$ and a Hermitian metric $\omega_{x, a}$  such that
\begin{equation}\label{oo1}
\sqrt{-1}\Lambda_{\omega_{x, a}}F_{H}>\frac{1}{4}\mu_{H, \omega_{0}}\Id_{E}
\end{equation}
on $B_{x}$. On the other hand, it is a simple matter to verify that
\begin{equation}
\omega_{x, a}^{n}=a^{-(n-1)}\omega_{0}^{n},
\end{equation}
and then the inequality (\ref{oo1}) is equivalent to
\begin{equation}\label{oo2}
\sqrt{-1}F_{H}\wedge \frac{(\omega_{x, a})^{n-1}}{(n-1)!}>\frac{1}{4}a^{-(n-1)}\mu_{H, \omega_{0}}\Id_{E} \frac{(\omega_{0})^{n}}{n!}.
\end{equation}

Because $M$ is compact, we can choose a finite open covering $\{B_{x_{i}}\}_{i=1}^{N}$ and a partition of unity $\{f_{i}\}_{i=1}^{N}$ subordinate to $\{B_{x_{i}}\}_{i=1}^{N}$, where $N$ is a finite positive integer. Set
\begin{equation}
\eta_{a}:=\sum_{i=1}^{N}f_{i}\cdot \omega_{x_{i}, a}^{n-1},
\end{equation}
and note  that $\eta_{a}$ is a strictly positive smooth $(n-1, n-1)$-form. Then there exists a unique Hermitian metric $\omega_{a}$ on $M$ (\cite{Mi}, p279) such that
\begin{equation}
\omega_{a}^{n-1}=\eta_{a}.
\end{equation}
Combining this with  (\ref{oo2}) yields
\begin{equation}\label{oo3}
\begin{aligned}
\sqrt{-1}F_{H}\wedge \frac{(\omega_{a})^{n-1}}{(n-1)!}&=\sqrt{-1}F_{H}\wedge \sum_{i=1}^{N}f_{i}\cdot \frac{(\omega_{x_{i}, a})^{n-1}}{(n-1)!}\\&>\frac{1}{4}a^{-(n-1)}\mu_{H, \omega_{0}}\Id_{E} \frac{(\omega_{0})^{n}}{n!},
\end{aligned}
\end{equation}
and then
\begin{equation}
\sqrt{-1}\Lambda_{\omega_{a}}F_{H}>0,
\end{equation}
which finishes the proof of Proposition \ref{RC01}.
\end{proof}

\begin{rem}
When $(M, \omega )$ is a compact K\"ahler manifold with positive holomorphic sectional curvatures, by Lemma 6.1 in \cite{Ya0} (see \cite{Lip} for compact Chern-K\"ahler-like Hermitian manifolds), we know that $(T^{1,0}M, \omega )$ is uniformly RC-positive, and then $T^{1,0}M$ is mean curvature positive.
\end{rem}

\section{Some applications}

\subsection{Calculating the minimal and maximal slopes}
Theorem \ref{minslope} provides a new way to calculate the minimal and maximal slopes in the Harder-Narasimhan types of tensor products, symmetric and exterior powers of holomorphic vector bundles. For instance, we have
\begin{thm}\label{thm5.1}
Let $(E, \bar{\partial}_{E})$ and  $(\tilde{E}, \bar{\partial}_{\tilde{E}})$ be two holomorphic vector bundles over a compact Gauduchon manifold $(M, \omega )$.  Then for $k,l\geq 0$, we have
\begin{align}
\label{tensorslope1}&\mu_L(E^{\otimes k}\otimes \tilde E^{\otimes l},\omega)=k\mu_L(E,\omega)+l\mu_L(\tilde E,\omega),\\
\label{tensorslope2}&\mu_U(E^{\otimes k}\otimes \tilde E^{\otimes l},\omega)=k\mu_U(E,\omega)+l\mu_U(\tilde E,\omega),\\
\label{symslope1}&\mu_{L}(S^kE, \omega )=k\mu_{L}(E, \omega ),\\
\label{symslope2}&\mu_{U}(S^kE, \omega )=k\mu_{U}(E, \omega ),
\end{align}
and for $1\leq k\leq\rank E$, we have
\begin{align}
\label{wedgeslope1}&\mu_L(\wedge^k E,\omega)\geq k\mu_L(E,\omega),\\
\label{wedgeslope2}&\mu_U(\wedge^k E,\omega)\leq k\mu_U(E,\omega).
\end{align}
\end{thm}

There are other practical ideas which work for proving Theorem \ref{thm5.1}.

\begin{proof}
We only need to prove \eqref{tensorslope1} for $k=l=1$, \eqref{symslope1} and \eqref{wedgeslope1}.

For convenience, we write for short
\begin{equation}
a=\frac{2\pi}{\Vol(M,\omega)}\mu_{L}(E,\omega),\qquad \tilde a=\frac{2\pi}{\Vol(M,\omega)}\mu_{L}(\tilde E,\omega).
\end{equation}
By Theorem \ref{minslope}, for any $\delta>0$, we can find  Hermitian metrics $H_{\delta}$ on $E$ and $\tilde H_{\delta}$ on $\tilde E$ such that
\begin{equation}
\sqrt{-1}\Lambda_{\omega}F_{H_{\delta}}\geq (a-\delta)\Id_{E},\qquad \sqrt{-1}\Lambda_{\omega}F_{\tilde H_{\delta}}\geq (\tilde a-\delta)\Id_{\tilde E}.
\end{equation}
Computing the mean curvatures of the induced Hermitian metrics $H_{\delta}\otimes \tilde H_{\delta}$ on $E\otimes \tilde E$, $S^kH_{\delta}$ on $S^kE$ $(k\geq 1)$ and $\wedge H^k$ on $\wedge ^kE$ ($1\leq k\leq \rank E$), we have
\begin{align}
&\sqrt{-1}\Lambda_{\omega}F_{H_{\delta}\otimes \tilde H_{\delta}}\geq (a+\tilde a-2\delta)\Id_{E\otimes \tilde E},\\
&\sqrt{-1}\Lambda_{\omega}F_{S^kH_{\delta}}\geq k(a-\delta)\Id_{S^kE},
\end{align}
where $k\geq 1$, and
\begin{equation}
\sqrt{-1}\Lambda_{\omega}F_{\wedge^kH_{\delta}}\geq k(a-\delta)\Id_{\wedge^kE},
\end{equation}
where $1\leq k\leq \rank E$. Applying Theorem \ref{minslope} again, we arrive at the ``$\geq$" parts.

Next we prove the ``$\leq$" parts. Notice that $E$ and $\tilde E$ have torsion free quotient sheaves $\mathcal Q_1$ and $\mathcal Q_2$ of positive ranks respectively, such that
\begin{equation}
\mu_{\omega}(\mathcal Q_1)=\mu_L(E,\omega),\qquad \mu_{\omega}(\mathcal Q_2)=\mu_L(\tilde E,\omega).
\end{equation}
Since $\mathcal Q_1\otimes \mathcal Q_2$ and $S^k\mathcal Q_1$ are quotient sheaves of $E\otimes \tilde E$ and $S^kE$ respectively, we reach
\begin{align}
&\mu_L(E\otimes\tilde E,\omega)\leq \mu_{\omega}(\mathcal Q_1\otimes\mathcal Q_2)\leq \mu_L(E,\omega)+\mu_L(\tilde E,\omega),\\
&\mu_L(S^kE)\leq \mu_\omega(S^k\mathcal Q_1)\leq k\mu_\omega(\mathcal Q_1).
\end{align}
From the definition of the minimal slope $\mu_L(\cdot,\omega)$, the ``$\leq$" parts come.
\end{proof}

\begin{cor}
The following statements are equivalent:
\begin{enumerate}
\item $\mu_L(E,\omega)>0$;
\item $\mu_L(E^{\otimes k},\omega)>0$ for some (resp. every) $k\geq 1$;
\item $\mu_L(S^k E,\omega)>0$ for some (resp. every) $k\geq 1$.
\end{enumerate}
Whenever one of the above holds, $\mu_L(\wedge^k E,\omega)>0$ for $1\leq k \leq\rank E$.
\end{cor}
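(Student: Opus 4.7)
The plan is to deduce the corollary as an immediate consequence of the formulas supplied by Theorem~\ref{HNthm}, since the three quantities of interest are just the last components of the vectors $\vec T_k(\vec\mu_\omega(E))$, $\vec S_k(\vec\mu_\omega(E))$ and $\vec A_k(\vec\mu_\omega(E))$. No new analytic input is needed beyond what is already established in the excerpt.

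First I would identify the minimum entry of each of these three rearranged vectors in terms of $\mu_L(E,\omega)=\mu_{r,\omega}(E)$. For $\vec T_k(\vec\mu_\omega(E))$, the relevant $r^k$ sums are of the form $\sum_{l=1}^{k}\mu_{i_l,\omega}(E)$ with $(i_1,\dots,i_k)\in{\rm T\Lambda}(k,r)$; since the components of $\vec\mu_\omega(E)$ are sorted in descending order, the smallest such sum is attained at $i_1=\cdots=i_k=r$, giving $k\mu_L(E,\omega)$. For $\vec S_k(\vec\mu_\omega(E))$, the sums $\sum_{i=1}^{r}a_i\mu_{i,\omega}(E)$ with $a\in{\rm S\Lambda}(k,r)$ are minimized by $a=(0,\dots,0,k)$, again yielding $k\mu_L(E,\omega)$. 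For $\vec A_k(\vec\mu_\omega(E))$ the multi-indices must be strictly increasing, so the minimum sum is $\sum_{i=r-k+1}^{r}\mu_{i,\omega}(E)$, which is at least $k\mu_L(E,\omega)$.

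Combining these computations with the identities of Theorem~\ref{HNthm}, I would record the identities $\mu_L(E^{\otimes k},\omega)=k\mu_L(E,\omega)$, $\mu_L(S^k E,\omega)=k\mu_L(E,\omega)$, and the inequality $\mu_L(\wedge^k E,\omega)\geq k\mu_L(E,\omega)$ for $1\leq k\leq r$ (these are in fact displayed immediately before the corollary). The equivalence of (1), (2), (3) then follows at once: each of $\mu_L(E^{\otimes k},\omega)$ and $\mu_L(S^k E,\omega)$ is positive if and only if $\mu_L(E,\omega)>0$, uniformly in $k\geq 1$. Likewise, if $\mu_L(E,\omega)>0$, then $\mu_L(\wedge^k E,\omega)\geq k\mu_L(E,\omega)>0$ for every $k=1,\dots,r$.

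There is no substantive obstacle here; the only small point requiring care is to check that in each case the minimum of the defining sums is indeed attained at the configuration described above, which is a matter of the rearrangement inequality applied to the monotone sequence $\mu_{1,\omega}(E)\geq\cdots\geq\mu_{r,\omega}(E)$. Once this is observed, the corollary is purely formal.
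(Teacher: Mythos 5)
Your proposal is correct and follows essentially the same route as the paper: one reads off the minimal entries of $\vec T_k(\vec\mu_\omega(E))$, $\vec S_k(\vec\mu_\omega(E))$, and $\vec A_k(\vec\mu_\omega(E))$ via Theorem~\ref{HNthm}, obtaining $\mu_L(E^{\otimes k},\omega)=\mu_L(S^k E,\omega)=k\mu_L(E,\omega)$ and $\mu_L(\wedge^k E,\omega)\geq k\mu_L(E,\omega)$, exactly the displayed identities that the paper records just before the corollary. The equivalence and the final assertion are then immediate, as you say.
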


\begin{cor}\label{vnshthm}
Let $(E, \bar{\partial}_{E})$ and  $(\tilde{E}, \bar{\partial}_{\tilde{E}})$ be two holomorphic vector bundles over a compact Gauduchon manifold $(M, \omega )$.  If $k\geq 0$, $l\geq 0$ and
\begin{equation}
k \mu_U(E,\omega) +l \mu_U(\tilde E,\omega)<0,
\end{equation}
then $\mu_U(E^{\otimes k}\otimes \tilde E^{\otimes l},\omega)<0$. Consequently
\begin{equation}\label{vnsh}
H^0(M, E^{\otimes k}\otimes \tilde E^{\otimes l})=0.
\end{equation}
\end{cor}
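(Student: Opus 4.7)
The proof naturally splits into two short steps, because Theorem \ref{HNthm} and Theorem \ref{cor1} do essentially all the work. Step one reduces the slope inequality to an arithmetic identity using Theorem \ref{HNthm}; step two converts a negative slope $\mu_U$ into a genuine Hermitian metric with negative mean curvature via Theorem \ref{cor1}, after which the Kobayashi--Wu vanishing theorem recalled in the Introduction kills all global sections.

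For step one, I would first apply Theorem \ref{HNthm}(2) to each tensor factor. Since $\vec T_k$ sends a descending $r$-tuple $(x_1,\ldots,x_r)$ to the descending rearrangement of the sums $\{x_{i_1}+\cdots+x_{i_k}\}$, its leading entry is $kx_1$; hence $\mu_U(E^{\otimes k},\omega)=k\mu_U(E,\omega)$ and likewise $\mu_U(\tilde E^{\otimes l},\omega)=l\mu_U(\tilde E,\omega)$. Next, Theorem \ref{HNthm}(1) applied to $E^{\otimes k}$ and $\tilde E^{\otimes l}$, together with the observation that the leading entry of $\vec T$ on descending inputs is simply the sum of the two leading entries, yields
\[\mu_U(E^{\otimes k}\otimes \tilde E^{\otimes l},\omega)=k\mu_U(E,\omega)+l\mu_U(\tilde E,\omega)<0,\]
which is the first assertion of the corollary.

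For step two, write $F=E^{\otimes k}\otimes\tilde E^{\otimes l}$ and pick $\delta>0$ small enough that $\frac{2\pi}{\Vol(M,\omega)}\mu_U(F,\omega)+\delta<0$. Theorem \ref{cor1} then produces a Hermitian metric $H_\delta$ on $F$ whose mean curvature $\sqrt{-1}\Lambda_\omega F_{H_\delta}$ is pointwise bounded above by a strictly negative multiple of $\Id_F$. The Kobayashi--Wu vanishing theorem from the Introduction---an integration of the Bochner identity against $\omega^{n-1}$ that uses the Gauduchon condition $\partial\bar\partial\omega^{n-1}=0$---then forces every holomorphic section of $F$ to vanish, giving $H^0(M,F)=0$.

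The genuine obstacle has already been overcome upstream: it is Theorem \ref{HNthm}, which itself rests on Theorem \ref{thm3} and Proposition \ref{limtcomm}. Given those inputs the present corollary is a formal consequence; the only minor point is that the degenerate case $k=l=0$ is automatically excluded by the strict inequality $k\mu_U(E,\omega)+l\mu_U(\tilde E,\omega)<0$.
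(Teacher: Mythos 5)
Your proof is correct and follows essentially the same route as the paper: the slope formula $\mu_U(E^{\otimes k}\otimes\tilde E^{\otimes l},\omega)=k\mu_U(E,\omega)+l\mu_U(\tilde E,\omega)$ is the same application of Theorem~\ref{HNthm} the paper records just before the corollary, and the vanishing is then obtained exactly as the paper indicates, by combining Theorem~\ref{cor1} with the Kobayashi--Wu vanishing theorem. The only cosmetic point is that the paper phrases Kobayashi--Wu via the maximum principle rather than integration over the Gauduchon form, but both derivations from the Bochner identity are standard and valid.
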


In Corollary \ref{vnshthm}, (\ref{vnsh}) is derived from Theorem \ref{cor0} and Kobayashi-Wu's vanishing theorem (\cite{KobW70}).

\medskip

\begin{proof}[Proof of Theorem \ref{cor111}]
Since $(E, \bar{\partial }_E)$ is HN-negative, we know that for any $\tau >0$, there exists a Gauduchon metric $\omega_{\tau}$ on $M$ such that $\mu_U(E, \omega_{\tau} )<0$ and
\begin{equation}
 G(M, E, \tilde{E})\leq \frac{-\mu_U(\tilde{E}, \omega_{\tau} )}{\mu_U(E, \omega_{\tau} )}< G(M, E, \tilde{E})+\tau .
\end{equation}
If $k>G(M, E, \tilde{E})l$, by choosing $\tau $ small enough, one can see
\begin{equation}\label{condi011}
k \mu_U(E, \omega_{\tau} )+l \mu_U(\tilde{E}, \omega_{\tau } )<0.
\end{equation}
According to  Corollary \ref{vnshthm}, we obtain $H^{0}(M, E^{\otimes k}\otimes \tilde E^{\otimes l})=0$.
\end{proof}

If $(E, \bar{\partial }_{E})$ is the holomorphic cotangent bundle $\Lambda^{1,0}M$ and $(\tilde{E}, \bar{\partial}_{\tilde{E}})$ is the holomorphic tangent bundle $T^{1,0}M$, we shall have established the corollary below.

\begin{cor}\label{corv1}
Let $M$ be a compact complex manifold. If $T^{1, 0}M$ is HN-positive, then
    \begin{equation}
    H^{0}(M, (T^{1,0}M)^{\otimes q}\otimes (\Lambda^{1,0}M)^{\otimes p})=0,
    \end{equation}
    when $p\geq 1$, $q\geq 0$ and $p>G(M, \Lambda^{1,0}M, T^{1, 0}M)q$.
\end{cor}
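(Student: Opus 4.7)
The plan is to deduce this corollary as a direct specialization of Theorem \ref{cor111}, with the roles $E = \Lambda^{1,0}M$ and $\tilde E = T^{1,0}M$. The first step is to verify the hypothesis: by Definition of HN-negativity, a holomorphic vector bundle is HN-negative exactly when its dual is HN-positive. Since the dual of $\Lambda^{1,0}M = (T^{1,0}M)^{*}$ is canonically isomorphic to $T^{1,0}M$, the assumption that $T^{1,0}M$ is HN-positive is equivalent to $\Lambda^{1,0}M$ being HN-negative. Hence Theorem \ref{cor111} applies to $E = \Lambda^{1,0}M$.

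Next, I would substitute into the numerical condition of Theorem \ref{cor111}. Setting $k = p$ and $l = q$, the hypothesis $k \geq 1$, $l \geq 0$, $k > G(M, E, \tilde E)\, l$ becomes precisely $p \geq 1$, $q \geq 0$, and $p > G(M, \Lambda^{1,0}M, T^{1,0}M)\, q$. The conclusion $H^{0}(M, E^{\otimes k} \otimes \tilde E^{\otimes l}) = 0$ then reads
\begin{equation}
H^{0}\bigl(M, (\Lambda^{1,0}M)^{\otimes p} \otimes (T^{1,0}M)^{\otimes q}\bigr) = 0,
\end{equation}
which, using the natural isomorphism of tensor product bundles, is the same as
\begin{equation}
H^{0}\bigl(M, (T^{1,0}M)^{\otimes q} \otimes (\Lambda^{1,0}M)^{\otimes p}\bigr) = 0,
\end{equation}
and this is the stated conclusion of Corollary \ref{corv1}.

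Since this argument is purely a matter of unpacking definitions and reindexing, there is no real obstacle; all the analytic content has already been absorbed into Theorem \ref{cor111} (which in turn relies on Corollary \ref{vnshthm}, Theorem \ref{cor1}, and the Kobayashi--Wu vanishing theorem). The only potential subtlety worth remarking on is the canonical identification $(\Lambda^{1,0}M)^{*} \cong T^{1,0}M$, which justifies the switch between HN-positivity of $T^{1,0}M$ and HN-negativity of $\Lambda^{1,0}M$, and the commutativity of tensor products, which justifies the reordering of the factors in the cohomology group.
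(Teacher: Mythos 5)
Your proof is correct and follows exactly the approach the paper intends: the authors state Corollary \ref{corv1} immediately after noting "Let $(E, \bar{\partial }_{E})$ be the holomorphic cotangent bundle $\Lambda^{1,0}M=(T^{1,0}M)^{\ast}$ and $(\tilde{E}, \bar{\partial}_{\tilde{E}})$ be $T^{1,0}M$," so it is precisely the specialization of Theorem \ref{cor111} that you carry out. Your unpacking of the duality $(\Lambda^{1,0}M)^{*}\cong T^{1,0}M$ and the reindexing $k=p$, $l=q$ are exactly what is needed.
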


\medskip

\subsection{A proof of Theorem \ref{ample}}

Let $(E, \bar{\partial}_{E})$ be a rank $r$ holomorphic vector bundle over a compact complex manifold $M$. By \cite{Har66} and \cite{Gri69}, we have
\begin{enumerate}
\item If $E$ is ample, then $\Lambda^k E$ is ample for $1\leq k\leq r$;
\item If $E$ is ample, then any quotient bundle $Q$ of $E$ is ample;
\item \label{GPAMP} If $E$ is Griffiths positive, then $E$ is ample;
\item If $E$ is ample, then when $k$ is sufficiently large, $S^k E$ is Griffiths positive.
\end{enumerate}

Of course any quotient bundle of an ample bundle has positive first Chern class. However, it is not clear whether this property still holds in the quotient sheaf case. Fortunately we can confirm that there exists a K\"ahler current in the first Chern class.

\begin{prop}\label{kahlercurrent}
Let $\omega$ be a K\"ahler metric on $M$. Assume that $E$ is ample and $1 \leq p \leq r-1$. Then we can find $\delta_p>0$, such that for any  $p$-rank coherent quotient sheaf $\mathcal Q$ of $E$, there exists a current $\theta\in c_1(\mathcal Q)$ satisfying $\theta\geq\delta_p \omega$ in the sense of current.
\end{prop}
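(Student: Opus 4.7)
The overall strategy is to construct a singular Hermitian metric on $\det\mathcal{Q}$ induced by a Griffiths-positive smooth metric on a suitable symmetric-exterior power of $E$, and then verify that the resulting Chern current dominates $\delta_p\omega$. Two preliminary reductions: first, one may assume $\mathcal{Q}$ is torsion-free, for if $\mathcal{T}\subset\mathcal{Q}$ is the torsion subsheaf then $\mathcal{Q}/\mathcal{T}$ is a torsion-free rank-$p$ quotient of $E$ with $c_1(\mathcal{Q})=c_1(\mathcal{Q}/\mathcal{T})+c_1(\mathcal{T})$, and $c_1(\mathcal{T})$ is represented by an effective cycle, hence a positive $(1,1)$-current. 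Second, since $E$ is ample so is $\wedge^pE$, and a standard consequence of Kodaira--Demailly-type positivity theory for ample vector bundles yields an integer $m=m(p)$ together with a smooth Hermitian metric $h$ on $V:=S^m(\wedge^pE)$ whose curvature is uniformly Griffiths positive:
\begin{equation*}
\sqrt{-1}\langle F_h(\xi,\bar\xi)s,s\rangle_h\;\geq\;\delta\,|\xi|_\omega^2\,|s|_h^2,\qquad \forall\,\xi\in T^{1,0}M,\ s\in V,
\end{equation*}
for some $\delta=\delta(p)>0$. Equivalently, the dual metric $h^*$ on $V^*$ is Griffiths negative with the same constant.

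\textbf{Construction.} With $\mathcal{Q}$ torsion-free of rank $p$, put $L:=\det\mathcal{Q}=(\wedge^p\mathcal{Q})^{**}$, so $c_1(\mathcal{Q})=c_1(L)$. The canonical surjection $E\twoheadrightarrow\mathcal{Q}$ induces $\wedge^pE\twoheadrightarrow\wedge^p\mathcal{Q}\to L$, and passing to $m$-th symmetric powers gives a morphism of coherent sheaves $\phi:V\to L^m$ which is surjective on the locally free locus $U\subset M$ of $\mathcal{Q}$; torsion-freeness forces $\mathrm{codim}(M\setminus U)\ge 2$. Dualizing yields $\phi^*:L^{-m}\to V^*$, and for any local frame $s^{-1}$ of the line bundle $L^{-m}$, $\phi^*(s^{-1})$ is a local holomorphic section of $V^*$ on the full chart (since $V^*$ is locally free), vanishing only on a set of codimension $\ge 2$. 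Define a singular Hermitian metric $h_{L^m}$ on $L^m$ by requiring its dual to satisfy
\begin{equation*}
|s^{-1}|^2_{h_{L^{-m}}}\;:=\;|\phi^*(s^{-1})|^2_{h^*}.
\end{equation*}
The standard curvature formula for the squared norm of a holomorphic section with respect to a Griffiths-negative metric gives, on the complement of the zero set of $\phi^*(s^{-1})$,
\begin{equation*}
\sqrt{-1}\,\partial\bar\partial\log|\phi^*(s^{-1})|^2_{h^*}\;\geq\;\delta\,\omega,
\end{equation*}
and consequently $\sqrt{-1}F_{h_{L^m}}\geq\delta\omega$ there. The Chern curvature current $\Theta:=\tfrac{\sqrt{-1}}{2\pi}F_{h_{L^m}}$ thus extends as a closed positive $(1,1)$-current on $M$ representing $c_1(L^m)=m\,c_1(\mathcal{Q})$. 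Setting $\theta:=\Theta/m\in c_1(\mathcal{Q})$ and $\delta_p:=\delta/(2\pi m)$ gives the required $\theta\geq\delta_p\omega$, with $\delta_p$ depending only on $p$.

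\textbf{Main obstacle.} The crucial step is to globalize the $\partial\bar\partial$-inequality to an inequality of currents on all of $M$, across the zero locus of $\phi^*(s^{-1})$ (which contains $M\setminus U$). The pointwise estimate away from the zero set follows from the standard Chern-connection computation plus a Cauchy--Schwarz / Berndtsson bound on the cross terms arising in $\partial\bar\partial\log|\cdot|^2_{h^*}$; the extension across the zero set is automatic because the plurisubharmonic singularities of $\log|\phi^*(s^{-1})|^2_{h^*}$ contribute only positive (Lelong-type) $(1,1)$-currents, which strengthen rather than break the inequality. A subsidiary but nontrivial ingredient is the existence of a Griffiths-positive Hermitian metric on $S^m(\wedge^pE)$ for sufficiently large $m$ when $E$ is ample; this is classical but not entirely elementary, and is what produces the uniform constant $\delta=\delta(p)$ that is the source of $\delta_p$.
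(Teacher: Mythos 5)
Your proof follows essentially the same route as the paper's, which is the Demailly--Peternell--Schneider argument: both use the Griffiths-positive metric on $S^{k}(\wedge^pE)$ afforded by ampleness, transfer it via the generically injective map from a power of $(\det\mathcal{Q})^{-1}$ into $(S^{k}(\wedge^pE))^{*}$, and observe that the resulting singular curvature dominates $\delta\omega$ and extends across the degeneracy locus because the potential blows down. Your Bochner-formula computation for $\log|\phi^*(s^{-1})|^2_{h^*}$ is just the Gauss--Codazzi inequality for the line subbundle the paper uses, phrased differently. The one substantive addition is your explicit preliminary reduction to torsion-free $\mathcal Q$: the paper silently identifies $\det(\mathcal Q^*)$ with $(\det\mathcal Q)^{-1}$ when it writes $|\xi|^2_{h^{-k_0}}$ for $\xi\in(\det\mathcal Q^*)^{k_0}$, which is only valid when $\mathcal Q$ has no torsion in codimension one; your observation that $c_1(\mathcal T)$ is represented by a nonnegative current cleanly justifies this reduction and so tightens a small gap in the paper's presentation.
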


\begin{proof}Our idea of the proof originates from  the proof of \cite[Theorem 1.18]{DPS94}.
Because $\mathcal Q$ is a $p$-rank coherent quotient sheaf of $E$, $\mathcal Q^*$ is a $p$-rank coherent subsheaf of $E^*$. Then there is the following injective sheaf morphism
\begin{equation}
j:\det (\mathcal Q^*)\rightarrow \Lambda^p E^*.
\end{equation}
Passing to symmetric powers, for $k \geq 1$, we have  the injective sheaf morphism
\begin{equation}
j_k:(\det(\mathcal Q^*))^k\rightarrow S^k(\Lambda^p E^*).
\end{equation}
Thanks to these sheaf morphisms, one can construct a singular Hermitian metric whose ``Chern curvature"  is a K\"ahler current on $\det \mathcal Q$ .

Since $\Lambda^p E$ is also ample, we can find $k_0\geq 1$, $a>0$ and Hermitian metric $\hat H$ on $S^{k_0}(\Lambda^p E)$ such that
\begin{equation}
\sqrt{-1} F_{\hat H}\geq a\Id\otimes\omega
\end{equation}
in the sense of Griffiths. Suppose $\tilde H$ is the induced Hermitian metric on $S^{k_0}(\Lambda^p E^*)=(S^{k_0}(\Lambda^p E))^*$ by $\hat{H}$, then
\begin{equation}
\sqrt{-1} F_{\tilde H}\leq -a\Id\otimes\omega
\end{equation}
in the sense of Griffiths.

Let $h$ be a smooth Hermitian metric on $\det \mathcal Q$. We define $\varphi:M\rightarrow [-\infty,\infty)$ as
\begin{equation}\label{defphi}
e^{k_0\varphi(z)}=\frac{|j_{k_0}(\xi)|^2_{\tilde H}}{|\xi|^2_{h^{-k_0}}},
\end{equation}
where $z\in M$ and $\xi$ is an arbitrary non-zero element of $(\det(\mathcal Q^*))^{k_0}|_z$.
%Assume that $u$ is a local nowhere vanishing holomorphic section of $(\det(\mathcal Q^*))^{k_0}$ and $\{e_1,\cdots,e_N\}$ ($N=\rank (S^{k_0}\Lambda^p E)^*$) is a local holomorphic frame fields of $(S^{k_0}\Lambda^p E)^*$. Then we  can express $j_{k_0}(u)$ as  $j_{k_0}(u)=f^\alpha e_\alpha$. Since $j_{k_0}$ is holomorphic and not identically $0$, $f^1,\cdots,f^N$ are holomorphic functions and at least one of them is not identically $0$. If we set $\tilde H_{\bar\beta\alpha}=\langle e_\alpha,e_\beta \rangle_{\tilde H}$,  and $A_{\bar\beta\alpha}=(|u|^2_{h^{-k_0}})^{-1}\tilde H_{\bar\beta\alpha}$, then
%\begin{equation}k_0\varphi=\log \left(\overline {f^{\beta}}A_{\bar\beta\alpha}f^\alpha\right).\end{equation}
%By the local expression of $\varphi$ above, clearly $\varphi$ is in $L^1(M)$, so $\sqrt{-1} F_h+\sqrt{-1} \pbp \varphi$ is a well-defined $(1,1)$-current.
Replacing $\xi$ by nowhere vanishing  local holomorphic sections of $(\det(\mathcal Q^*))^{k_0}$, we can get the local expressions of $\varphi$. Based on the local expressions of $\varphi$, one can see at once that $\varphi\in L^1(M)$ and consequently $\sqrt{-1} F_h+\sqrt{-1} \pbp \varphi$ is a well-defined $(1,1)$-current.
Furthermore, consider
\begin{equation}
Z=\{ z\in M \ | \ j|_{\det(\mathcal Q^*)|_z}:\det(\mathcal Q^*)|_z\rightarrow \Lambda^p E^* |_z\text{ is not injective}\},
\end{equation}
then over $M\setminus Z$, $j_{k_0}:(\det(\mathcal Q^*))^{k_0}\rightarrow(S^{k_0}\Lambda^p E)^*$ is a subbundle and $h^{-k_0}e^{k_0\varphi}$ is actually the induced Hermitian metric by $\tilde H$. If $u$ is a nowhere vanishing holomorphic section of $(\det(\mathcal Q^*))^{k_0}$ on some open subset of $M\!\setminus\! Z$, set $s=j_{k_0}(u)$ and $\tilde s=|s|_{\tilde H}^{-1}s$. Then by virtue of the Gauss-Codazzi equation for subbundles, we have
%\begin{equation}-k_0(F_h+\pbp\varphi)=F_{h^{-k_0}e^{k_0\varphi}}=\langle F_{\tilde H} \tilde s,\tilde s\rangle_{\tilde H}-\langle\beta \tilde s,\beta \tilde s\rangle_{\tilde H},\end{equation}
\begin{equation}\begin{aligned}
-\sqrt{-1} k_0(F_h+\pbp\varphi)&=\sqrt{-1} F_{h^{-k_0}e^{k_0\varphi}}\\
& =\sqrt{-1} \langle F_{\tilde H} \tilde s,\tilde s\rangle_{\tilde H}-\sqrt{-1} \langle\beta \tilde s,\beta \tilde s\rangle_{\tilde H}\\
&\leq \sqrt{-1} \langle F_{\tilde H} \tilde s,\tilde s\rangle_{\tilde H},
\end{aligned}\end{equation}
where $\beta$ is the $(1,0)$-component of the second fundamental form.
One can directly verify that
\begin{equation}
\sqrt{-1}(F_h+\pbp\varphi)\geq \frac{a}{k_0}\omega
\end{equation}
on $M \!\setminus\! Z$. Note that $\lim_{z'\rightarrow z}\varphi(z')=-\infty$ for any $z\in Z$. It is easy to check that actually $\sqrt{-1}(F_h+\pbp\varphi)\geq \frac{a}{k_0}\omega$ in the sense of current on the whole of $M$.
%For any $z_0 \in Z$, we can choose a open neighborhood $D$ of $z_0$ and bounded smooth function $\psi$ on $D$ such that
%\begin{equation}
%\sqrt{-1} F_h-\frac{a}{k_0}\omega=\sqrt{-1}\pbp \psi,
%\end{equation}
%Then $\psi+\varphi$ is smooth and pluri-subharmonic on $D\!\setminus\! Z$. Furthermore, for any $z\in Z\cap D$,
%\begin{equation}
%\lim_{z'\rightarrow z} (\psi+\varphi)(z')=-\infty.
%\end{equation}
%For any $i\geq 1$, let
%\begin{equation}
%\phi_i=\max\{\psi+\varphi,-i\}.
%\end{equation}
%Then $\phi_i$ is continuous and pluri-subharmonic on $D$. Since $\phi_i$ decreases to $\psi+\varphi$ when $i\rightarrow \infty$, $\psi+\varphi$ is also pluri-subharmonic and consequently
%\begin{equation}
%\sqrt{-1} (F_h+i\pbp\varphi)=\frac{a}{k_0}\omega+\sqrt{-1}\pbp (\psi+\varphi)\geq \frac{a}{k_0}\omega,
%\end{equation}
%in the sense of current on $D$.
%Therefore $\sqrt{-1} (F_h+i\pbp\varphi)\geq \frac{a}{k_0}\omega$ in the sense of current on whole $M$.
The fact that $\sqrt{-1}(F_h+\pbp\varphi)\in 2\pi c_1(\det \mathcal Q)$ finishes this proof.
\end{proof}

As a simple corollary of Proposition \ref{kahlercurrent}, we infer

\begin{cor}\label{ample2}
If $E$ is ample, then we can find a K\"ahler metric $\omega_0$ on $M$, such that for any Gauduchon metric $\omega$, we have
\begin{equation}
\mu_{L}(E, \omega )\geq \int_M\omega_0\wedge\frac{\omega^{n-1}}{(n-1)!}.
\end{equation}
\end{cor}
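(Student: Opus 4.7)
The plan is to combine Proposition \ref{kahlercurrent} with the $\pbp$-closedness of $\omega^{n-1}$ under the Gauduchon condition to obtain a uniform lower bound on $\mu_\omega(\mathcal{Q})$ valid for every coherent quotient sheaf $\mathcal{Q}$ of $E$. I would fix once and for all the K\"ahler metric $\omega_K$ supplied by Proposition \ref{kahlercurrent}, with associated constants $\delta_1,\ldots,\delta_{r-1}>0$. Since $E$ ample implies $\det E$ is an ample line bundle, I can also pick a smooth Hermitian metric on $\det E$ whose Chern form $\theta_E\in c_1(\det E)$ satisfies $\theta_E\geq \delta_r\omega_K$ for some $\delta_r>0$. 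Setting $c=\min_{1\leq p\leq r}\delta_p/p>0$ and $\omega_0=c\,\omega_K$ then produces the candidate K\"ahler metric.

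Given a Gauduchon metric $\omega$ and a coherent quotient $\mathcal{Q}$ of $E$, I would first reduce to the torsion-free case. Writing $0\to T\to\mathcal{Q}\to\mathcal{Q}/T\to 0$ with $T=\mathrm{tors}(\mathcal{Q})$, the torsion sheaf $T$ satisfies $\deg_\omega T\geq 0$, so $\mu_\omega(\mathcal{Q}/T)\leq\mu_\omega(\mathcal{Q})$; hence it suffices to bound $\mu_\omega(\mathcal{Q})$ when $\mathcal{Q}$ is torsion-free of some rank $p\in\{1,\ldots,r\}$. For $1\leq p\leq r-1$ Proposition \ref{kahlercurrent} yields a $(1,1)$-current $\theta\in c_1(\det\mathcal{Q})$ with $\theta\geq\delta_p\omega_K$; for $p=r$ the only torsion-free rank-$r$ quotient of $E$ is $E$ itself and I would use $\theta=\theta_E$.

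The crucial step is to pass from the Bott-Chern identity $\theta\in c_1(\det\mathcal{Q})$ to the integral identity
\begin{equation*}
\deg_\omega(\mathcal{Q})=\int_M\theta\wedge\frac{\omega^{n-1}}{(n-1)!}.
\end{equation*}
Writing $\theta-c_1(\det\mathcal{Q},H)=\tfrac{\i}{2\pi}\pbp\varphi$ for a smooth background metric $H$ on $\det\mathcal{Q}$, the function $\varphi$ produced by (\ref{defphi}) is quasi-psh, hence in $L^1(M)$ by compactness. Integrating by parts against the smooth form $\omega^{n-1}$, the Gauduchon hypothesis $\pbp\omega^{n-1}=0$ gives $\int_M\i\pbp\varphi\wedge\omega^{n-1}=\int_M\varphi\cdot\i\pbp\omega^{n-1}=0$, establishing the identity. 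Strong positivity of $\omega^{n-1}$ combined with $\theta-\delta_p\omega_K\geq 0$ as a $(1,1)$-current then gives
\begin{equation*}
\deg_\omega(\mathcal{Q})\geq \delta_p\int_M\omega_K\wedge\frac{\omega^{n-1}}{(n-1)!},
\end{equation*}
so $\mu_\omega(\mathcal{Q})\geq (\delta_p/p)\int_M\omega_K\wedge\omega^{n-1}/(n-1)!\geq \int_M\omega_0\wedge\omega^{n-1}/(n-1)!$. Taking the infimum over $\mathcal{Q}$ completes the proof.

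The main obstacle I expect is precisely the K\"ahler-current degree formula above: one must know that the quasi-psh potential $\varphi$ built in Proposition \ref{kahlercurrent} is globally integrable on $M$ so that the distributional integration by parts against the Gauduchon $(n-1,n-1)$-form $\omega^{n-1}$ is legitimate, which is where compactness of $M$ and the quasi-plurisubharmonicity of $\varphi$ are essential. Beyond this point, everything reduces to the elementary positivity inequality between a current bounded below by $\delta_p\omega_K$ and the strongly positive test form $\omega^{n-1}$.
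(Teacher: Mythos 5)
Your proof is correct and follows the route the paper intends: the paper states Corollary \ref{ample2} as a direct consequence of Proposition \ref{kahlercurrent} without writing out the details, and the chain of reasoning you supply (reduce to torsion-free quotients using $\deg_\omega T\geq 0$; invoke Proposition \ref{kahlercurrent} for ranks $1\leq p\leq r-1$ and ampleness of $\det E$ for $p=r$; use $\varphi\in L^1$ together with $\partial\bar\partial\omega^{n-1}=0$ to justify $\deg_\omega(\mathcal Q)=\int_M\theta\wedge\omega^{n-1}/(n-1)!$; then wedge the positive current $\theta-\delta_p\omega_K$ against $\omega^{n-1}$) is exactly the standard way to fill in that gap. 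The one point that deserved the attention you gave it is the integrability of the quasi-psh potential so that the current-against-smooth-form pairing and the distributional integration by parts against the Gauduchon $(n-1,n-1)$-form are legitimate; this is already asserted in the proof of Proposition \ref{kahlercurrent}.
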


\begin{proof}[Proof of Theorem \ref{ample}] Apply Corollary \ref{ample2} and Theorem \ref{cor0}.\end{proof}

\subsection{A integral inequality for holomorphic maps}

Let $(M, \omega )$ be a Hermitian manifold of complex dimension $m$. In the local complex coordinate $\{z^\alpha\}_{\alpha=1}^m$, the K\"ahler form $\omega $ and the curvature tensor $F_{\omega }$ of the Chern connection $D_{\omega }$ can be expressed as
\begin{equation}
\omega =\sqrt{-1}g_{\alpha \bar{\beta }} dz^{\alpha }\wedge d\bar{z}^{\beta},
\end{equation}
\begin{equation}
F_{\omega }(\frac{\partial }{\partial z^{\alpha }}, \frac{\partial }{\partial \bar{z}^{\beta }})\frac{\partial }{\partial z^{\gamma }}=(F_{\omega })_{\alpha \bar{\beta } \gamma}^{\eta}\frac{\partial }{\partial z^{\eta }}
\end{equation}
and
\begin{equation}
(F_{\omega })_{\alpha \bar{\beta } \gamma}^{\eta}=-g^{\eta \bar{\xi}}\frac{\partial^{2} g_{\gamma \bar{\xi}}}{\partial z^{\alpha }\bar{z}^{\beta }}+
g^{\eta \bar{\tau}}g^{\zeta \bar{\xi}}\frac{\partial g_{\gamma \bar{\xi}}}{\partial z^{\alpha }}\frac{\partial g_{\zeta \bar{\tau}}}{\partial \bar{z}^{\beta }},
\end{equation}
where
$(g^{\alpha\bar{\beta}})$ is the transpose of the inverse matrix of $(g_{\alpha\bar{\beta}})$. For any $X, Y \in T_{x}^{1, 0}(M)\setminus \{0\}$, $x \in M$, the holomorphic bisectional curvature is defined by
\begin{equation}
HB^{\omega}_{x}(X, Y)=\frac{\langle F_{\omega }(X, \bar{X})Y, Y\rangle_{\omega }}{|X|_{\omega}^{2}|Y|_{\omega }^{2}},
\end{equation}
where $\langle \cdot , \cdot \rangle_{\omega}$ is the Hermitian inner product induced by $\omega$. The supremum of holomorphic bisectional curvatures at $x\in M$ is given by
\begin{equation}
HB^{\omega}_{x}:=\sup \{HB^{\omega}_{x}(X, Y)\ | \ X, Y \in T_{x}^{1, 0}(M)\setminus \{0\}\}.
\end{equation}

\begin{prop}\label{bochner}
Let $f$ be a holomorphic map from a Gauduchon manifold $(M, \omega )$ to a Hermitian manifold $(N, \nu )$.  Then for any Hermitian metric $H$ on $T^{1, 0}M$, there holds
\begin{equation}\label{EQN6}
\begin{split}
\sqrt{-1}\Lambda_{\omega}\partial \bar{\partial}|\partial f|_{H, \nu}^2\geq& g^{\alpha\bar{\beta}}\big\langle \nabla_{\tfrac{\partial}{\partial z^\alpha}}\partial f, \nabla_{\tfrac{\partial}{\partial z^{\beta}}}\partial f \big\rangle_{H, \nu}\\
&+ \lambda_L(H, \omega)|\partial f|_{H, \nu}^2- HB^{\nu}_{f(\cdot)}|\partial f|_{H, \nu}^2|\partial f|_{\omega, \nu}^2,
\end{split}
\end{equation}
where $\nabla$ is the connection on $\Lambda^{1, 0}M\otimes f^{\ast}(T^{1, 0}N)$ induced by the Chern connection $D_{H}$  on $T^{1, 0}M$ and the Chern connection $D_\nu$ on $T^{1, 0}N$,  $HB^{\nu}_{f(\cdot)}$ is the supremum of holomorphic bisectional curvatures at $f(\cdot)\in (N, \nu)$, $|\partial f |_{H, \nu}$ and $|\partial f|_{\omega, \nu}$ are the norms of $\partial f$ as a section of $\Lambda^{1, 0}M\otimes f^{\ast}(T^{1, 0}N)$.
\end{prop}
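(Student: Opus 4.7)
The plan is to apply the Bochner-type identity (1.1) to $\partial f$, viewed as a holomorphic section of the tensor bundle $\mathcal{E}=\Lambda^{1,0}M\otimes f^{*}(T^{1,0}N)$ endowed with the induced Hermitian metric $H^{*}\otimes f^{*}\nu$. The holomorphicity of $\partial f$ follows directly from that of $f$ together with the compatibility of pullback with the Chern connection. The identity then gives
\begin{equation*}
\sqrt{-1}\Lambda_{\omega}\partial\bar{\partial}|\partial f|^{2}_{H,\nu}=|D\partial f|^{2}_{H,\nu,\omega}-\langle \sqrt{-1}\Lambda_{\omega}F_{\mathcal{E}}(\partial f),\partial f\rangle_{H,\nu};
\end{equation*}
since $\bar{\partial}\partial f=0$ in $\mathcal{E}$, the Chern derivative is purely of type $(1,0)$ and $|D\partial f|^{2}_{H,\nu,\omega}=g^{\alpha\bar\beta}\langle \nabla_{\partial_\alpha}\partial f,\nabla_{\partial_\beta}\partial f\rangle_{H,\nu}$, which is exactly the first term on the right hand side of (\ref{EQN6}).

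Next I would split $F_{\mathcal{E}}=F_{\Lambda^{1,0}M,H^{*}}\otimes \Id+\Id\otimes F_{f^{*}(T^{1,0}N),f^{*}\nu}$ and estimate the two resulting mean-curvature contractions separately. In the dual of an $H$-orthonormal frame that diagonalizes $\sqrt{-1}\Lambda_{\omega}F_{H}$, the Chern curvature of $H^{*}$ is the negative transpose of $F_H$, so the eigenvalues of $\sqrt{-1}\Lambda_{\omega}F_{H^{*}}$ are precisely $-\lambda_{1}(H,\omega),\ldots,-\lambda_{m}(H,\omega)$; the operator $\sqrt{-1}\Lambda_{\omega}F_{H^{*}}\otimes \Id$ inherits the same extremal eigenvalues on $\mathcal{E}$, yielding pointwise
\begin{equation*}
-\langle (\sqrt{-1}\Lambda_{\omega}F_{H^{*}}\otimes \Id)\partial f,\partial f\rangle_{H,\nu}\ \geq\ \lambda_{L}(H,\omega)\,|\partial f|^{2}_{H,\nu}.
\end{equation*}

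For the pullback curvature, at each point $x\in M$ I would choose a frame $\{e_{\alpha}\}_{\alpha=1}^{m}$ of $T^{1,0}_x M$ that is simultaneously $\omega$-orthonormal and $H$-orthogonal, so that $g^{\alpha\bar\beta}=\delta^{\alpha\beta}$ and $H^{\alpha\bar\beta}=h_{\alpha}^{-1}\delta^{\alpha\beta}$ for some $h_{\alpha}>0$. Using $f^{*}F_{\nu}(\partial_{\alpha},\bar{\partial}_{\beta})=F_{\nu}(f_{*}\partial_{\alpha},f_{*}\bar{\partial}_{\beta})$ and setting $X_{\gamma}=f_{*}e_{\gamma}$, the curvature contraction factors as a double sum
\begin{equation*}
\langle (\Id\otimes\sqrt{-1}\Lambda_{\omega}F_{f^{*}\nu})\partial f,\partial f\rangle_{H,\nu}=\sum_{\alpha,\gamma}h_{\alpha}^{-1}\langle F_{\nu}(X_{\gamma},\bar X_{\gamma})X_{\alpha},X_{\alpha}\rangle_{\nu}.
\end{equation*}
Each summand is bounded above by $HB^{\nu}_{f(x)}|X_{\gamma}|^{2}_{\nu}|X_{\alpha}|^{2}_{\nu}$ directly from the definition of the supremum of holomorphic bisectional curvature; the $\gamma$-sum then produces $|\partial f|^{2}_{\omega,\nu}$ and the $\alpha$-sum produces $|\partial f|^{2}_{H,\nu}$, giving exactly the last term in (\ref{EQN6}). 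Assembling the three pieces proves the inequality. The step I expect to be most delicate is precisely this final one: a naive operator-norm bound on $\sqrt{-1}\Lambda_{\omega}F_{f^{*}\nu}$ would only produce a trace-type estimate, and the product structure $|\partial f|^{2}_{H,\nu}|\partial f|^{2}_{\omega,\nu}$ emerges only because the simultaneous diagonalization allows the pointwise scalar $HB^{\nu}_{f(\cdot)}$ to be extracted from the double sum.
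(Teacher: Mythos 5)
Your proposal is correct and follows essentially the same route as the paper's proof. The paper re-derives the Bochner-type identity for $\partial f$ explicitly in local coordinates (its equations (5.41)--(5.44), i.e.\ the Leibniz expansion of $g^{\alpha\bar\beta}\partial_\alpha\partial_{\bar\beta}|\partial f|^2_{H,\nu}$ followed by commuting covariant derivatives to extract the curvature of $\Lambda^{1,0}M\otimes f^*T^{1,0}N$), which is precisely what you obtain by citing (1.1) for the holomorphic section $\partial f$ of $\mathcal{E}$; the subsequent curvature split into the $F_{H^*}\otimes\Id$ and $\Id\otimes f^*F_\nu$ pieces, the eigenvalue bound $-\sqrt{-1}\Lambda_\omega F_{H^*}\geq\lambda_L(H,\omega)\Id$, and the simultaneous $\omega$-orthonormal / $H$-orthogonal diagonalization that lets the scalar $HB^\nu_{f(\cdot)}$ be pulled out of the double sum to produce $|\partial f|^2_{H,\nu}|\partial f|^2_{\omega,\nu}$ all match the paper's argument term for term, so the only difference is one of packaging rather than substance.
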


\begin{proof}
Write $n= \dim ^{\mathbb{C}} N$. In the local complex coordinates $\{z^\alpha\}_{\alpha=1}^m$ on $M$ and $\{w^i\}_{i=1}^n$ on $N$, we set $\langle\frac{\partial}{\partial z^\alpha}, \frac{\partial}{\partial z^\beta}\rangle_H= \tilde{g}_{\alpha\bar{\beta}}$ and $\langle\frac{\partial}{\partial w^i}, \frac{\partial}{\partial w^j}\rangle_{\nu}= \nu_{i\bar{j}}$. Let $H^\ast$ be the Hermitian metric on $\Lambda^{1, 0}M$ induced by $H$ and $\langle d z^\alpha, d z^\beta\rangle_{H^\ast}= \tilde{g}^{\alpha\bar{\beta}}$. Then one has the following local expressions
\begin{equation}
\partial f = \frac{\partial f^i}{\partial z^\alpha}dz^\alpha\otimes \frac{\partial}{\partial w^i} \quad \text{and}\quad  |\partial f |^2_{H, \nu}= \frac{\partial f^i}{\partial z^\alpha}\Big(\overline{\frac{\partial f^j}{\partial z^\beta}}\Big)\tilde{g}^{\alpha\bar{\beta}}\nu_{i\bar{j}}.
\end{equation}
Let $g$ be the Hermitian metric whose associated $(1, 1)$-form is $\omega$. Write $g(\frac{\partial}{\partial z^\alpha}, \frac{\partial}{\partial z^\beta})= g_{\alpha\bar{\beta}}$. So
\begin{equation}\label{EQN1}
\sqrt{-1}\Lambda_{\omega}\partial\bar{\partial}|\partial f |^2_{H, \nu}=\frac{\sqrt{-1}\partial\bar{\partial}|\partial f |^2_{H, \nu}\wedge \frac{\omega^{m-1}}{(m-1)!}}{\frac{\omega^{m}}{m!}}= g^{\alpha\bar{\beta}}\frac{\partial^2}{\partial z^\alpha\partial \bar{z}^\beta} |\partial f |^2_{H, \nu}.
\end{equation}

 Moreover, the condition that $f$ is holomorphic gives us
\begin{equation}
\nabla_{\tfrac{\partial}{\partial \bar{z}^\beta}}\partial f=0.
\end{equation}
Clearly there is
\begin{equation}\label{EQN2}
\frac{\partial^2}{\partial z^\alpha\partial \bar{z}^\beta} |\partial f |^2_{H, \nu}=\big\langle \nabla_{\tfrac{\partial}{\partial z^\alpha}}\partial f, \nabla_{\tfrac{\partial}{\partial z^\beta}}\partial f\big\rangle_{H, \nu}+\big\langle \nabla_{\tfrac{\partial}{\partial \bar{z}^\beta}}\nabla_{\tfrac{\partial}{\partial z^\alpha}}\partial f, \partial f \big\rangle_{H, \nu}.
\end{equation}
A direct calculation yields that
\begin{equation}
\begin{split}
&\nabla_{\tfrac{\partial}{\partial \bar{z}^\beta}}\nabla_{\tfrac{\partial}{\partial z^\alpha}}\partial f\\
=& \big(\nabla_{\tfrac{\partial}{\partial \bar{z}^\beta}}\nabla_{\tfrac{\partial}{\partial z^\alpha}}-\nabla_{\tfrac{\partial}{\partial z^\alpha}}\nabla_{\tfrac{\partial}{\partial \bar{z}^\beta}}\big)\partial f\\
=& \frac{\partial f^i}{\partial z^\gamma} \big(\nabla_{\tfrac{\partial}{\partial \bar{z}^\beta}}\nabla_{\tfrac{\partial}{\partial z^\alpha}}-\nabla_{\tfrac{\partial}{\partial z^\alpha}}\nabla_{\tfrac{\partial}{\partial \bar{z}^\beta}}\big)(dz^\gamma\otimes\frac{\partial }{\partial w^i})\\
=& \frac{\partial f^i}{\partial z^\gamma}\big((F_{H^\ast}(\frac{\partial}{\partial \bar{z}^\beta}, \frac{\partial}{\partial z^\alpha})dz^\gamma)\otimes \frac{\partial }{\partial w^i}+ dz^\gamma\otimes (F_\nu(f_{\ast}(\frac{\partial}{\partial \bar{z}^\beta}), f_{\ast}(\frac{\partial}{\partial z^\alpha}))\frac{\partial }{\partial w^i}) \big),
\end{split}
\end{equation}
and then
\begin{equation}\label{EQN3}
\begin{split}
&g^{\alpha\bar{\beta}}\big\langle \nabla_{\tfrac{\partial}{\partial \bar{z}^\beta}}\nabla_{\tfrac{\partial}{\partial z^\alpha}}\partial f, \partial f \big\rangle_{H, \nu}\\
= &\big\langle -\frac{\partial f^i}{\partial z^\gamma}(\sqrt{-1}\Lambda_\omega F_{H^\ast}(dz^\gamma))\otimes \frac{\partial }{\partial w^i}, \partial f\big\rangle_{H, \nu}\\
& +\big\langle g^{\alpha\bar{\beta}}\frac{\partial f^i}{\partial z^\gamma}dz^\gamma\otimes (F_\nu(f_{\ast}(\frac{\partial}{\partial \bar{z}^\beta}), f_{\ast}(\frac{\partial}{\partial z^\alpha}))\frac{\partial }{\partial w^i}),  \partial f\big\rangle_{H, \nu}\\
= & -\big\langle \sqrt{-1}\Lambda_\omega F_{H^\ast}(dz^\gamma), dz^\xi\big\rangle_{H^\ast}\frac{\partial f^i}{\partial z^\gamma}\Big(\overline{\frac{\partial f^j}{\partial z^\xi}}\Big)\nu_{i\bar{j}}\\
&+ \frac{\partial f^i}{\partial z^\gamma}\Big(\overline{\frac{\partial f^j}{\partial z^\xi}}\Big)\tilde{g}^{\gamma\bar{\xi}}g^{\alpha\bar{\beta}}\big\langle F_\nu(f_{\ast}(\frac{\partial}{\partial \bar{z}^\beta}), f_{\ast}(\frac{\partial}{\partial z^\alpha}))(\frac{\partial }{\partial w^i}), \frac{\partial }{\partial w^j}\big\rangle_{\nu},
\end{split}
\end{equation}
where $F_{H^\ast}$ and $F_\nu$ are the curvatures of $D_{H^{\ast}}$ and $D_\nu$, respectively.

At the considered point $x\in M$, one can choose a local complex coordinate $\{z^1, \cdots, z^m\}$ centered at $x$ such that
\begin{equation}
g^{\alpha\bar{\beta}}(x)=\delta_{\alpha\beta} \quad \text{and} \quad \tilde{g}_{\gamma\bar{\xi}}(x)=a_\gamma \delta_{\gamma\xi},
\end{equation}
where for every $1\leq \gamma \leq m$, $a_\gamma$ is a positive number.
Notice that
\begin{equation}
f_{\ast}(\frac{\partial}{\partial z^\alpha})= \frac{\partial f^i}{\partial z^\alpha}\frac{\partial }{\partial w^i}.
\end{equation}
At $x$, we can write
\begin{equation}
|\partial f|_{\omega, \nu}^2= g^{\alpha\bar{\beta}}\big\langle \frac{\partial f^i}{\partial z^\alpha}\frac{\partial }{\partial w^i}, \frac{\partial f^j}{\partial z^\beta}\frac{\partial }{\partial w^j}\big\rangle_{\nu}= \sum_{\alpha=1}^m  |f_{\ast}(\frac{\partial}{\partial z^\alpha})|^2_{\nu}
\end{equation}
and
\begin{equation}
\begin{split}
|\partial f|_{H, \nu}^2=& \tilde{g}^{\gamma\bar{\xi}}\big\langle \frac{\partial f^i}{\partial z^\gamma}\frac{\partial }{\partial w^i}, \frac{\partial f^j}{\partial z^\xi}\frac{\partial }{\partial w^j}\big\rangle_{\nu}\\
=& \sum_{\gamma=1}^m \frac{1}{a_{\gamma}}\big\langle \frac{\partial f^i}{\partial z^\gamma}\frac{\partial }{\partial w^i}, \frac{\partial f^j}{\partial z^\gamma}\frac{\partial }{\partial w^j}\big\rangle_{\nu}\\
=& \sum_{\gamma=1}^m \frac{1}{a_{\gamma}}|f_{\ast}(\frac{\partial}{\partial z^\gamma})|^2_{\nu}.
\end{split}
\end{equation}
Furthermore, we compute
\begin{equation}\label{EQN4}
\begin{split}
&\frac{\partial f^i}{\partial z^\gamma}\Big(\overline{\frac{\partial f^j}{\partial z^\xi}}\Big)\tilde{g}^{\gamma\bar{\xi}}g^{\alpha\bar{\beta}}\big\langle F_\nu(f_{\ast}(\frac{\partial}{\partial \bar{z}^\beta}), f_{\ast}(\frac{\partial}{\partial z^\alpha}))(\frac{\partial }{\partial w^i}), \frac{\partial }{\partial w^j}\big\rangle_{\nu}(x)\\
=& \sum_{\gamma=1}^m a_{\gamma}^{-1}\frac{\partial f^i}{\partial z^\gamma}\Big(\overline{\frac{\partial f^j}{\partial z^\gamma}}\Big)\big\langle \sum_{\alpha=1}^m F_\nu(f_{\ast}(\frac{\partial}{\partial \bar{z}^\alpha}), f_{\ast}(\frac{\partial}{\partial z^\alpha}))(\frac{\partial }{\partial w^i}), \frac{\partial }{\partial w^j}\big\rangle_{\nu}(x)\\
=& \sum_{\gamma=1}^m \sum_{\alpha=1}^m \big\langle F_\nu(f_{\ast}(\frac{\partial}{\partial \bar{z}^\alpha}), f_{\ast}(\frac{\partial}{\partial z^\alpha}))(\sqrt{a_{\gamma}^{-1}}\frac{\partial f^i}{\partial z^\gamma}\frac{\partial }{\partial w^i}), \sqrt{a_{\gamma}^{-1}}\frac{\partial f^j}{\partial z^\gamma}\frac{\partial }{\partial w^j}\big\rangle_{\nu}(x)\\
=&- \sum_{\gamma=1}^m \sum_{\alpha=1}^m HB^{\nu}_{f(x)}(f_{\ast}(\frac{\partial}{\partial z^\alpha}), \sqrt{a_{\gamma}^{-1}}f_{\ast}(\frac{\partial}{\partial z^\gamma}))\big|f_{\ast}(\frac{\partial}{\partial z^\alpha})\big|^2_{\nu}\cdot\big|\sqrt{a_{\gamma}^{-1}}f_{\ast}(\frac{\partial}{\partial z^\gamma})\big|^2_{\nu}(x)\\
\geq & -HB^{\nu}_{f(x)}\sum_{\gamma=1}^m \sum_{\alpha=1}^m \big|f_{\ast}(\frac{\partial}{\partial z^\alpha})\big|^2_{\nu}\cdot\big|\sqrt{a_{\gamma}^{-1}}f_{\ast}(\frac{\partial}{\partial z^\gamma})\big|^2_{\nu}(x)\\
= & -HB^{\nu}_{f(x)}\big|\partial f\big|_{\omega, \nu}^2\cdot\big|\partial f\big|_{H, \nu}^2(x).
\end{split}
\end{equation}

On the other hand, the fact $\sqrt{-1}\Lambda_{\omega}F_H\geq \lambda_L(H, \omega)\Id$ implies $-\sqrt{-1}\Lambda_{\omega}F_{H^\ast}\geq \lambda_L(H, \omega)\Id$. Thus
\begin{equation}\label{EQN5}
-\langle \sqrt{-1}\Lambda_{\omega}F_{H^\ast}(dz^{\gamma}), dz^{\xi}\rangle_{H^\ast}\frac{\partial f^i}{\partial z^\gamma}\Big(\overline{\frac{\partial f^j}{\partial z^\xi}}\Big)\nu_{i\bar{j}}\geq \lambda_L(H, \omega)|\partial f|_{H, \nu}^2.
\end{equation}
This together with  (\ref{EQN1}), (\ref{EQN2}), (\ref{EQN3}) and (\ref{EQN4}) gives (\ref{EQN6}).

\end{proof}

\begin{thm}\label{holomorphic2}

Let $f$ be a holomorphic map from a compact Gauduchon manifold $(M, \omega )$ to a Hermitian manifold $(N, \nu )$.  If $f$ is not constant, then for any Hermitian metric $H$ on $T^{1, 0}M$, there holds
\begin{equation}\label{holomorphic4}
\int_{M}\lambda_L(H,\omega)\frac{\omega^{m}}{m!}\leq \int_{M}HB^{\nu }_{f(\cdot )}\cdot f^{\ast }(\nu ) \wedge \frac{\omega^{m-1}}{(m-1)!},
\end{equation}
where $m=\dim ^{\mathbb{C}}M$.

\end{thm}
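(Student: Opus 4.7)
The plan is to apply the Bochner-type inequality of Proposition \ref{bochner} to the smooth nonnegative function $u := |\partial f|_{H,\nu}^2$, strengthen it via a Kato-type refinement, pass to $\log(u+\epsilon)$, and integrate against $\omega^m/m!$ using the Gauduchon condition. Since $f$ is non-constant, $u$ is not identically zero, and in any local holomorphic chart its zero set $Z$ (the critical locus of $f$) is the common zero set of the finitely many holomorphic functions $\partial f^i/\partial z^\alpha$, so it is a proper analytic subset and hence has Lebesgue measure zero in $M$.

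The first step is a Kato inequality. Holomorphicity of $f$ forces $\nabla_{\partial/\partial\bar z^\beta}\partial f = 0$, so $\partial_\alpha u = \langle\nabla_{\partial/\partial z^\alpha}\partial f, \partial f\rangle_{H,\nu}$, and a pointwise Cauchy--Schwarz in the $g^{\alpha\bar\beta}\langle\,\cdot\,,\,\cdot\,\rangle_{H,\nu}$ inner product gives
$$|\partial u|_\omega^2 \ =\ g^{\alpha\bar\beta}\partial_\alpha u\,\partial_{\bar\beta}u\ \leq\ u\cdot g^{\alpha\bar\beta}\bigl\langle\nabla_{\partial/\partial z^\alpha}\partial f,\ \nabla_{\partial/\partial z^\beta}\partial f\bigr\rangle_{H,\nu}.$$
For any $\epsilon>0$, substituting Proposition \ref{bochner} and the above Kato inequality into the identity
$$\sqrt{-1}\Lambda_\omega\partial\bar\partial\log(u+\epsilon)\ =\ \frac{\sqrt{-1}\Lambda_\omega\partial\bar\partial u}{u+\epsilon}\ -\ \frac{|\partial u|_\omega^2}{(u+\epsilon)^2}$$
and collecting terms yields the pointwise lower bound
$$\sqrt{-1}\Lambda_\omega\partial\bar\partial\log(u+\epsilon)\ \geq\ \frac{u}{u+\epsilon}\bigl(\lambda_L(H,\omega)-HB^\nu_{f(\cdot)}|\partial f|_{\omega,\nu}^2\bigr).$$

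To finish, I would integrate both sides against $\omega^m/m!$ on $M$. The Gauduchon condition $\partial\bar\partial\omega^{m-1}=0$ together with Stokes' theorem yields $\int_M \sqrt{-1}\Lambda_\omega\partial\bar\partial\log(u+\epsilon)\,\omega^m/m! = 0$, so
$$\int_M \frac{u}{u+\epsilon}\bigl(\lambda_L(H,\omega)-HB^\nu_{f(\cdot)}|\partial f|_{\omega,\nu}^2\bigr)\frac{\omega^m}{m!}\ \leq\ 0.$$
Since $\lambda_L$, $HB^\nu_{f(\cdot)}$, and $|\partial f|_{\omega,\nu}^2$ are all bounded on the compact $M$, dominated convergence lets me send $\epsilon\to 0^+$, replacing $u/(u+\epsilon)$ by $\mathbf{1}_{\{u>0\}}$. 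Because $|\partial f|_{\omega,\nu}^2$ vanishes exactly where $u$ does and $Z=\{u=0\}$ has measure zero, the indicator can be discarded, giving $\int_M \lambda_L(H,\omega)\,\omega^m/m!\leq \int_M HB^\nu_{f(\cdot)}|\partial f|_{\omega,\nu}^2\,\omega^m/m!$. The standard identity $|\partial f|_{\omega,\nu}^2\,\omega^m/m! = f^\ast\nu\wedge\omega^{m-1}/(m-1)!$ then yields the claim. The main technical obstacle is the simultaneous handling of the $\epsilon$-regularization and the zero locus of $u$: the sharpness of the Kato inequality is precisely what guarantees that the $\epsilon$-dependent correction in the pointwise bound carries the favorable sign, and the joint vanishing of $u$ and $|\partial f|_{\omega,\nu}^2$ on the measure-zero set $Z$ is what makes it safe to discard the indicator $\mathbf{1}_{\{u>0\}}$ after passing to the limit.
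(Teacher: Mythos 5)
Your argument is correct and follows essentially the same route as the paper's proof of this theorem: the Bochner-type lower bound of Proposition \ref{bochner}, the pointwise Kato inequality $|\partial u|_\omega^2\leq u\, g^{\alpha\bar\beta}\langle\nabla_\alpha\partial f,\nabla_\beta\partial f\rangle_{H,\nu}$ (which is exactly what the paper establishes in normal coordinates), the $\log(u+\varepsilon)$ regularization, integration against $\omega^m/m!$ via the Gauduchon condition, and dominated convergence as $\varepsilon\to 0$ using that the critical locus $\{\partial f=0\}$ is a proper analytic subset of measure zero. The only minor stylistic difference is that you explicitly invoke the vanishing of $|\partial f|^2_{\omega,\nu}$ on $Z$ when discarding $\mathbf{1}_{\{u>0\}}$, whereas measure zero alone already suffices; this is harmless.
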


\begin{proof}
By (\ref{EQN6}), we have
\begin{equation}\label{EQN7}
\begin{split}
&\sqrt{-1}\Lambda_{\omega}\partial \bar{\partial}\log(|\partial f|_{H, \nu}^2+ \varepsilon)
= \sqrt{-1}\Lambda_{\omega}\partial \Big(\frac{\bar{\partial} |\partial f|_{H, \nu}^2}{|\partial f|_{H, \nu}^2+ \varepsilon}\Big) \\
=&\frac{\sqrt{-1}\Lambda_{\omega}\partial \bar{\partial}|\partial f|_{H, \nu}^2}{|\partial f|_{H, \nu}^2+ \varepsilon}+ \frac{\sqrt{-1}\Lambda_{\omega}\bar{\partial} |\partial f|_{H, \nu}^2\wedge \partial |\partial f|_{H, \nu}^2}{(|\partial f|_{H, \nu}^2+ \varepsilon)^2}\\
\geq&\frac{g^{\alpha\bar{\beta}}\big\langle \nabla_{\tfrac{\partial}{\partial z^\alpha}}\partial f, \nabla_{\tfrac{\partial}{\partial z^{\beta}}}\partial f \big\rangle_{H, \nu}}{|\partial f|_{H, \nu}^2+ \varepsilon}- \frac{g^{\alpha\bar{\beta}}\frac{\partial }{\partial z^{\alpha}}|\partial f|_{H, \nu}^2\cdot\frac{\partial }{\partial \bar{z}^{\beta}}|\partial f|_{H, \nu}^2}{(|\partial f|_{H, \nu}^2+ \varepsilon)^2}\\
&+ \lambda_L(H, \omega)\frac{|\partial f|_{H, \nu}^2}{|\partial f|_{H, \nu}^2+ \varepsilon}- HB^{\nu}_{f(\cdot)}\frac{|\partial f|_{H, \nu}^2|\partial f|_{\omega, \nu}^2}{|\partial f|_{H, \nu}^2+ \varepsilon},
\end{split}
\end{equation}
where $\varepsilon > 0$ is small enough.
Choose the local complex coordinate $\{z^1, \cdots, z^m\}$ such that $g^{\alpha\bar{\beta}}=\delta_{\alpha\beta}$ at the considered point. Then
\begin{equation}
\begin{split}
&\frac{g^{\alpha\bar{\beta}}\big\langle \nabla_{\tfrac{\partial}{\partial z^\alpha}}\partial f, \nabla_{\tfrac{\partial}{\partial z^{\beta}}}\partial f \big\rangle_{H, \nu}}{|\partial f|_{H, \nu}^2+ \varepsilon}- \frac{g^{\alpha\bar{\beta}}\frac{\partial }{\partial z^{\alpha}}|\partial f|_{H, \nu}^2\cdot\frac{\partial }{\partial \bar{z}^{\beta}}|\partial f|_{H, \nu}^2}{(|\partial f|_{H, \nu}^2+ \varepsilon)^2}\\
=& \frac{1}{|\partial f|_{H, \nu}^2+ \varepsilon}\Big(\sum_{\alpha=1}^m \big|\nabla_{\tfrac{\partial}{\partial z^\alpha}}\partial f\big|_{H, \nu}^2-\frac{\sum_{\alpha=1}^m \big|\frac{\partial }{\partial z^{\alpha}}|\partial f|_{H, \nu}^2\big|^2}{|\partial f|_{H, \nu}^2+ \varepsilon}\Big)\\
\geq& \frac{\varepsilon}{(|\partial f|_{H, \nu}^2+ \varepsilon)^2}\big(\sum_{\alpha=1}^m \big|\nabla_{\tfrac{\partial}{\partial z^\alpha}}\partial f\big|_{H, \nu}^2\big)\\
=& \frac{\varepsilon}{(|\partial f|_{H, \nu}^2+ \varepsilon)^2}g^{\alpha\bar{\beta}}\big\langle \nabla_{\tfrac{\partial}{\partial z^\alpha}}\partial f, \nabla_{\tfrac{\partial}{\partial z^{\beta}}}\partial f \big\rangle_{H, \nu},
\end{split}
\end{equation}
where the inequality is due to
\begin{equation}
\big|\frac{\partial }{\partial z^{\alpha}}|\partial f|_{H, \nu}^2\big|= \big|\big\langle\nabla_{\tfrac{\partial}{\partial z^\alpha}}\partial f, \partial f\big\rangle_{H, \nu}\big|\leq  \big|\nabla_{\tfrac{\partial}{\partial z^\alpha}}\partial f\big|_{H, \nu}\cdot |\partial f|_{H, \nu}.
\end{equation}
Hence
\begin{equation}\label{EQN8}
\begin{split}
&\sqrt{-1}\Lambda_{\omega}\partial \bar{\partial}\log(|\partial f|_{H, \nu}^2+ \varepsilon)\\
\geq &\frac{\varepsilon}{(|\partial f|_{H, \nu}^2+ \varepsilon)^2}g^{\alpha\bar{\beta}}\big\langle \nabla_{\tfrac{\partial}{\partial z^\alpha}}\partial f, \nabla_{\tfrac{\partial}{\partial z^{\beta}}}\partial f \big\rangle_{H, \nu}\\
&+ \lambda_L(H, \omega)\frac{|\partial f|_{H, \nu}^2}{|\partial f|_{H, \nu}^2+ \varepsilon}- HB^{\nu}_{f(\cdot)}\frac{|\partial f|_{H, \nu}^2|\partial f|_{\omega, \nu}^2}{|\partial f|_{H, \nu}^2+ \varepsilon}\\
\geq& \frac{|\partial f|_{H, \nu}^2}{|\partial f|_{H, \nu}^2+ \varepsilon}(\lambda_L(H, \omega)-HB^{\nu}_{f(\cdot)}|\partial f|_{\omega, \nu}^2).
\end{split}
\end{equation}
Integrating (\ref{EQN8}) with respect to $\frac{\omega^m}{m!}$ over $M$, and noting that $\omega$ is Gauduchon, one has
\begin{equation}\label{EQN9}
\int_M \frac{|\partial f|_{H, \nu}^2}{|\partial f|_{H, \nu}^2+ \varepsilon}(\lambda_L(H, \omega)-HB^{\nu}_{f(\cdot)}|\partial f|_{\omega, \nu}^2)\frac{\omega^m}{m!}\leq 0.
\end{equation}

If $f$ is not constant, $\tilde{\Sigma}:= \{x\in M \ | \ \partial f (x)=0 \}$ is a proper subvariety of $M$. Applying Lebesgue's dominated convergence theorem, we deduce
\begin{equation}\label{EQN10}
\begin{split}
&\int_M (\lambda_L(H, \omega)-HB^{\nu}_{f(\cdot)}|\partial f|_{\omega, \nu}^2)\frac{\omega^m}{m!}\\
= &\lim_{\varepsilon\to 0}\int_M \frac{|\partial f|_{H, \nu}^2}{|\partial f|_{H, \nu}^2+ \varepsilon}(\lambda_L(H, \omega)-HB^{\nu}_{f(\cdot)}|\partial f|_{\omega, \nu}^2)\frac{\omega^m}{m!}\\
\leq &0.
 \end{split}
\end{equation}

\end{proof}

If $H$ is  the Hermitian metric on $T^{1, 0}(M)$ induced by $\omega $, then the mean curvature $\sqrt{-1}\Lambda _{\omega }F_{H}$ is just the second Chern-Ricci curvature of $\omega$. In this special case,  the inequality (\ref{holomorphic4}) was proved recently by Zhang (\cite{Zy}).

\begin{proof}[Proof of Theorem \ref{holomorphic1}]
Let $\{H_{\varepsilon_i}\}$ be a sequence of Hermitian metrics given in Theorem \ref{theorem1}. Combining (\ref{holomorphic4}) and Theorem \ref{theorem1}, we derive
\begin{equation}\label{holomorphic401}
2\pi \mu_{L}(T^{1, 0}M, \omega )=\lim_{i\rightarrow \infty } \int_{M}\lambda_L(H_{\varepsilon_i},\omega)\frac{\omega^{m}}{m!}\leq \int_{M}HB^{\nu }_{f(\cdot )}\cdot f^{\ast }(\nu ) \wedge \frac{\omega^{m-1}}{(m-1)!}.
\end{equation}
This concludes the proof of Theorem \ref{holomorphic1}.

\end{proof}

\bigskip

\medskip

{\bf Conflict of interest} We declare that we have no financial and personal relationships with other people or
organizations that can inappropriately influence our work, there is no professional or other personal interest
of any nature or kind in any product, service and company that could be construed as influencing the position
presented in, or the review of, this manuscript.

{\bf  Data availability statement} All data generated or analysed during this study are included in this published article.

%{\bf Acknowledgements}: The authors would like to thank the editor and the referees for their help  and valuable comments.

%\vskip 1 true cm

\end{document}